\newcommand\FF{{\mathbb F}}
\newcommand\GG{{\mathbb G}}
\newcommand\NN{{\mathbb N}}
\newcommand\CC{{\mathbb C}}
\newcommand\TT{{\mathbb T}}
\newcommand{\LL}{\mathbb{L}}
\newcommand{\Fp}{{\FF_{\! p}}}
\newcommand{\Fq}{{\FF_{\! q}}}
\newcommand{\Fpp}{{\FF_{\! \pfrak}}}
\newcommand{\FI}{{\FF_{\!\infty}}}
\newcommand{\CI}{{\CC_{\infty}}}
\newcommand{\cO}{\mathcal{O}}
\newcommand{\cR}{\mathcal{R}}
\newcommand{\cS}{\mathcal{S}}
\newcommand{\cF}{\mathcal{F}}
\newcommand{\sA}{\mathsf{A}}
\newcommand{\CAlg}{\mathsf{Alg}}
\newcommand{\CGrp}{\mathsf{Grp}}
\newcommand\afrak{{\mathfrak a}}
\newcommand\bfrak{{\mathfrak b}}
\newcommand\pfrak{{\mathfrak p}}
\newcommand\qfrak{{\mathfrak q}}
\newcommand{\up}[1]{{u_\pfrak^{\, #1}}}   
\newcommand{\degp}{{d_\pfrak}} 
\newcommand{\Epfrak}{E((\CA)_\pfrak)}
\newcommand{\jp}{\jmath_\pfrak}
\newcommand{\ip}{\iota_\pfrak}
\newcommand{\mot}{\mathsf{M}}  
\newcommand{\dumot}{\mathfrak{M}}  
\newcommand{\LA}{A\!\otimes\! L}
\newcommand{\CA}{A \!\otimes\! \CI}
\newcommand{\FA}{A \!\otimes\! F}
\newcommand{\LsepA}{A \!\otimes\! L^\sep}
\newcommand{\fsf}{\mathfrak{sf}}
\newcommand{\fd}{\mathfrak{d}}
\newcommand{\locLA}{\LA[F^{-1}]}
\newcommand{\locLAGtilde}{\LA[\tilde{G}^{-1}]}
\newcommand{\locLAG}{\LA[G^{-1}]}
\newcommand{\locLt}{L[t][F^{-1}]}
\newcommand{\GPsi}{\Gamma_{\!\Psi}}  
\newcommand{\one}{\mathds{1}}
\newcommand{\transp}[1]{#1^{\rm tr}}   
\newcommand{\dual}[1]{#1^\vee}
\newcommand{\bigmid}{\, \Big| \,}
\newcommand{\sep}{\mathrm{sep}}
\newcommand{\per}{\mathrm{per}}
\newcommand{\dphi}{d\phi}
\newcommand{\hdt}[2]{\partial_t^{(#1)}\!\!\left(#2\right)\!}
\newcommand{\hdte}[1]{\partial_t^{(#1)}}  
\newcommand{\hd}[3]{\partial_{#1}^{(#2)}\!\!\left(#3\right)\!}
\newcommand{\hde}[2]{\partial_{#1}^{(#2)}}  
\newcommand{\e}{\exp_E}  
\newcommand{\id}{\mathrm{id}}
\newcommand{\res}{\mathrm{res}}
\newcommand{\ev}{\mathrm{ev}}
\newcommand{\tr}{\mathrm{tr}}
\newcommand{\rk}{\mathrm{rk}}
\newcommand{\vs}{\Fq-\mathrm{v.s.}}
\newcommand{\D}[1]{\mathcal{D}_{#1}} 
\newcommand{\Dup}{\D{\up{}}} 
\newcommand{\trk}[1]{#1\textrm{-}\mathrm{rk}}
\newcommand{\tate}{\CC_\infty\cs{t}}  
\newcommand{\ps}[1]{[\![#1]\!]}  
\newcommand{\ls}[1]{(\!(#1)\!)}
\newcommand{\cs}[1]{\langle #1 \rangle} 
\newcommand{\vect}[1]{\text{\boldmath $#1$\unboldmath}} 
\newcommand{\svect}[2]{\left( \begin{matrix} {#1}_{1}\\ \vdots \\ {#1}_{#2}\end{matrix}\right)}
\newcommand{\isom}{\cong}
\newcommand{\betr}[1]{\lvert #1\rvert}
\newcommand{\norm}[1]{\left\lVert #1\right\rVert}
\DeclareMathOperator{\Aut}{Aut}
\DeclareMathOperator{\Hom}{Hom}
\DeclareMathOperator{\End}{End}
\DeclareMathOperator{\Mat}{Mat}
\DeclareMathOperator{\GL}{GL}
\DeclareMathOperator{\Gal}{Gal}
\DeclareMathOperator{\Lie}{Lie}
\DeclareMathOperator{\Ker}{ker}
\DeclareMathOperator{\Coker}{coker}
\DeclareMathOperator{\Quot}{Quot}
\DeclareMathOperator{\Ext}{Ext}
\newcommand{\LieE}{\Lie E}
\theoremstyle{plain}
\newtheorem{thm}{Theorem}[section]
\newtheorem{cor}[thm]{Corollary}
\newtheorem{lem}[thm]{Lemma}
\newtheorem{prop}[thm]{Proposition}
\newtheorem{mainthm}{Theorem}
\theoremstyle{definition}
\newtheorem{defn}[thm]{Definition}
\newtheorem{exmp}[thm]{Example}
\newtheorem{rem}[thm]{Remark}
\begin{document}

\title[Comparison isomorphisms and Galois representations]{Non-abelian Anderson A-modules: Comparison isomorphisms and Galois representations}
\author{Andreas Maurischat}
\address{\rm {\bf Andreas Maurischat}, Lehrstuhl A f\"ur Mathematik, RWTH Aachen University, Germany }
\email{\sf andreas.maurischat@matha.rwth-aachen.de}

\newdate{date}{14}{08}{2024}
\date{\displaydate{date}}



\begin{abstract}
In this manuscript, we consider non-abelian Anderson $A$-modules $E$ (of generic characteristic).
The main results are on the structure of their motives, and on comparison isomorphisms between their cohomological realizations.
In the center of these comparison isomorphisms, there is the space of special functions $\fsf(E)$ as defined by Gazda and the author in \cite{qg-am:sfgtshrd}. We also provide a generalization of Anderson's result on the equivalence of uniformizability of the Anderson module and rigid analytic triviality of its associated motive.

We contribute results that are new even in the case of abelian Anderson modules. For every non-zero prime ideal $\pfrak$ of $A$, the relation of the space of special functions to the $\pfrak$-adic Tate module 
provides a way to obtain $\pfrak^{n+1}$-torsion as special values of hyperderivatives of these special functions.
Using this result for uniformizable Anderson modules, we are able to describe the $\pfrak$-adic Galois representation via a rigid analytic trivialization, and hence give a direct link between the image of the $\pfrak$-adic Galois representation and the motivic Galois group. This generalizes results of various authors.
\end{abstract}

\maketitle

\setcounter{tocdepth}{2}
\tableofcontents

\section{Introduction}

%
%
%

Abelian or $A$-finite Anderson $A$-modules are equipped with several cohomological realizations, e.g.~a Betti realization, $\pfrak$-adic realizations, and a de Rham realization (cf.~\cite[Section 5]{uh-akj:pthshcff}). The de Rham realization a priori depends on whether an Anderson module $E$ is abelian or $A$-finite, but 
both definitions coincide if $E$ is both abelian and $A$-finite, and we showed in \cite{am:aefam} that an Anderson module is abelian if and only if it is $A$-finite. Hence both definitions are indeed equivalent.

Another property called \emph{uniformizability} is also need for an abelian Anderson module such that the Betti and de Rham realization functors are indeed faithful. In that case, all the cohomological realizations are locally free of the same rank, and one has comparison isomorphisms between these realizations over suitable extensions (see \cite[Section 5.7]{uh-akj:pthshcff}).

If the Anderson module is abelian and uniformizable, all these realizations can also be obtained via realizations of its corresponding $A$-motive (see \cite[Lemma 5.44 ff.]{uh-akj:pthshcff}). However, in the proof for obtaining the de Rham cohomology via the $A$-motive (see \cite{eug:drcdm} for the case of Drinfeld modules, and \cite{db-mp:ligvpc} for the general case), Anderson modules enter the picture which are not abelian, but still uniformizable.
These Anderson modules, called quasi-periodic extensions, were also central in the investigation of Gamma values in \cite{db-mp:ligvpc}.

Since non-abelian Anderson modules play this important role, this raises the question which of the properties do transfer to Anderson modules which are not abelian, but still uniformizable, or which are neither of both.

One part of this manuscript is devoted to the study of the structure of $A$-motives and dual $A$-motives of non-abelian Anderson modules. For the $A$-motives, we even provide an analog of Anderson's theorem on the relation between uniformizability and rigid analytic triviality.

We also analyze the cohomological realizations and the comparison isomorphisms for non-abelian Anderson modules.

\medskip

A further aim of this manuscript is to generalize to uniformizable Anderson modules the results in \cite{am-rp:tcagfdte} on the description of their torsion points via Taylor coefficients of special functions as well as the description of their Galois representations via Taylor expansions of the entries of rigid analytic trivializations.

\medskip

Throughout this manuscript, we deal with Anderson modules of generic characteristic, although certain properties might be valid also for Anderson modules of special characteristics.

\subsection{Cohomological realizations and comparison isomorphisms}

Although the realization functors might be not faithful for non-abelian (i.e.~non-$A$-finite) Anderson $A$-modules, we show that
the crucial property for all the comparison isomorphisms to exist is that the Anderson module is uniformizable. 
For the Betti homology and de Rham homology, this comparison isomorphism is already given in \cite{qg-am:sfgtshrd} for all Anderson modules, even non-abelian and non-uniformizable ones. In this article, we also bring into the picture the $\pfrak$-adic realizations for arbitrary primes~$\pfrak$.

In the heart of these isomorphisms is the space of special functions $\mathfrak{sf}(E)$ of the Anderson module $E$, as introduced in \cite{qg-am:sfgtshrd}. For an Anderson module $(E,\phi)$ over some field $L\subset \CI$, it is defined by
\[  \fsf(E) :=\{ h\in E(\TT) \mid \forall a\in A: \phi_a (h)=a\cdot h\}, \]
where $E(\TT)$ denote the points of $E$ in the Tate-algebra $\TT = A \hat{\otimes} \CI$, $\phi_a$ denotes the $A$-action on $E$, and $a\cdot h$ the usual multiplication in $\TT$. (For precise definitions see Section \ref{sec:notation}.)

Let $u\in A$ be such that $A/\Fq[u]$ is a finite separable extension, then one has two isomorphisms (see \cite[Theorems 3.11 and 3.3]{qg-am:sfgtshrd})
\[  \mathfrak{d}_{A/\Fq[u]}\otimes_A \Lambda_E \xrightarrow{\delta_u} \fsf(E) \xrightarrow{\cong} \Hom_{\TT}^\tau\left(\mot(E)_{\TT},\TT\right) \] 
where $\mathfrak{d}_{A/\Fq[u]}$ denotes the different ideal of the extension which is isomorphic to the dual of the space of differentials $\Omega_{A/\Fq}^1$ (cf.~\cite[Remark 3.14]{qg-am:sfgtshrd}), and $\mot(E)_\TT$ is the $A$-motive with coefficients extended to $\TT$. In other words, the comparison isomorphism from the Betti homology $\Lambda_E$ to the de Rham homology
$\Hom_{\TT}^\tau\left(\mot(E)_{\TT},\Omega_{A/\Fq}^1\otimes_A \TT \right)$ factors through the space of special functions, or -- more precisely -- through
$\Omega_{A/\Fq}^1\otimes_A \fsf(E)$.

For stating the link to the $\pfrak$-adic realizations, we let $A_\pfrak$ denote the complete local ring of $A$ at the prime ideal $\pfrak$, $\Fpp=A/\pfrak$ the residue field, and $\hat{\Omega}_{A_\pfrak/\Fpp}$ the space of continuous differential $1$-forms of $A_\pfrak$ (which is isomorphic to $A_\pfrak d\up{}$ for any uniformizer $\up{}$ at $\pfrak$).

\begin{mainthm}\label{mainthm:sf-in-tate-module} (Theorem \ref{thm:isom-jp}, Proposition \ref{prop:H-direct-summand} and Corollary \ref{cor:uniformizable-implies-constant-prank})\\
Let $(E,\phi)$ be a (possibly non-abelian) Anderson $A$-module of generic characteristic, $\pfrak \lhd A$ a non-zero prime ideal of $A$, and $\fsf(E)$ the space of special functions of $E$. Then there is a natural injective homomorphism of $A_\pfrak$-modules
\[ \hat{\Omega}_{A_\pfrak/\Fpp}\otimes_A \fsf(E) \longrightarrow T_\pfrak(E).\]
The image of $\hat{\Omega}_{A_\pfrak/\Fpp}\otimes_A \fsf(E)$ in $T_\pfrak(E)$ is always a direct summand.
If $E$ is uniformizable, this homomorphism is an isomorphism.
\end{mainthm}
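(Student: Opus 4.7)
The plan is to construct the map via hyperderivative Taylor expansions of special functions in a $\pfrak$-adic uniformizer, then verify injectivity, the direct-summand property, and the isomorphism under uniformizability in sequence.

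\emph{Construction.} Fix a uniformizer $\up{} \in A_\pfrak$, so that $d\up{}$ is an $A_\pfrak$-basis of $\hat{\Omega}_{A_\pfrak/\Fpp}$. For $h \in \fsf(E) \subset E(\TT)$, the defining relation $\phi_a(h) = a \cdot h$ implies that $h_n := h \bmod \pfrak^{n+1}\TT$ is a $\pfrak^{n+1}$-torsion point of $E(\TT_\pfrak/\pfrak^{n+1}\TT_\pfrak)$, where $\TT_\pfrak := A_\pfrak \hat{\otimes}_{\Fq} \CI$. Using the separability of $\Fpp/\Fq$ together with the canonical embedding $\Fpp \subset \bar{\Fq} \subset \CI$, we obtain the CRT decomposition $\TT_\pfrak/\pfrak^{n+1}\TT_\pfrak \cong \prod_{\sigma: \Fpp \hookrightarrow \CI} \CI[\up{}]/(\up{n+1})$; from the distinguished component one extracts the $n$-th $\up{}$-Taylor coefficient of $h_n$, obtaining a point $y_n \in E(\CI)$ which the relation forces to lie in $E[\pfrak^{n+1}] \subset E(L^{\sep})$. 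The sequence $(y_n)_n$ is compatible with the transition maps of $T_\pfrak(E) = \varprojlim_n E[\pfrak^{n+1}]$ and defines the image of $d\up{} \otimes h$. In the toy case $A = \Fq[t]$, $\pfrak = (t)$, this recovers the recipe $h = \sum_k c_k t^k \mapsto (c_n)_n$, with $\phi_t(c_{n+1}) = c_n$ forced by $\phi_t(h) = t \cdot h$.

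\emph{Injectivity.} If $d\up{} \otimes h \mapsto 0$ in $T_\pfrak(E)$, the distinguished $\up{n}$-coefficients of $h_n$ vanish for all $n$; using the defining relation $\phi_a(h) = a \cdot h$ together with the interaction between $\tau$ and the CRT decomposition (which the relation makes $\Gal(\Fpp/\Fq)$-equivariant), this vanishing propagates to every CRT component of $h_n$. Hence $h \in \bigcap_n \pfrak^{n+1}\TT_\pfrak = 0$ by Krull's intersection theorem on the Noetherian local ring $\TT_\pfrak$; since the localization $\TT \hookrightarrow \TT_\pfrak$ is injective, we conclude $h = 0$, and therefore $d\up{} \otimes h = 0$ in the source.

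\emph{Direct summand and isomorphism.} Since $A_\pfrak$ is a discrete valuation ring and $T_\pfrak(E)$ is torsion-free over it, a submodule is a direct summand iff the cokernel is torsion-free; by Nakayama this reduces to verifying that the induced map $(\hat{\Omega}_{A_\pfrak/\Fpp} \otimes_A \fsf(E))/\pfrak \to E[\pfrak]$ of $\Fpp$-vector spaces is injective, which follows from the injectivity above together with exactness of reduction. For the final statement, \cite[Theorem 3.3]{qg-am:sfgtshrd} gives $\fsf(E) \cong \mathfrak{d}_{A/\Fq[u]} \otimes_A \Lambda_E$, so $\rk_A \fsf(E) = \rk_A \Lambda_E$; for $E$ uniformizable this equals $r := \rk_{A_\pfrak} T_\pfrak(E)$ by Corollary~\ref{cor:uniformizable-implies-constant-prank}. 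Hence $\hat{\Omega}_{A_\pfrak/\Fpp} \otimes_A \fsf(E)$ is free of rank $r$ over $A_\pfrak$, and an injective direct-summand inclusion between $A_\pfrak$-modules of the same rank must be an isomorphism.

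The principal obstacle is the construction itself: the $\tau$-action on $\TT$ does not preserve individual CRT components of $\TT_\pfrak/\pfrak^{n+1}\TT_\pfrak$ but rather permutes them Frobenius-equivariantly, so one must track carefully that the extraction in the distinguished component indeed produces a point in $E(L^\sep)$ rather than in some larger auxiliary space, and that the construction is canonically encoded by $d\up{}$ (not merely by $\up{}$). Once the map is in place, the remaining steps are standard commutative-algebra and rank arguments over the DVR $A_\pfrak$.
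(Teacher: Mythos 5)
Your overall architecture is the same as the paper's (a Taylor-coefficient/residue map into $T_\pfrak(E)$, then a mod-$\pfrak$ independence argument for the direct summand, then a rank count via $\Lambda_E$ in the uniformizable case), but two of the three steps have genuine gaps. The first is the construction itself. When $\deg(\pfrak)>1$, projecting $\Fpp\otimes E(\CI)$ onto a single CRT component means multiplying second-factor entries by scalars $\sigma(\alpha)\in\Fpp\setminus\Fq\subset\CI$, and scalar multiplication by $\CI$ does not commute with the Frobenius-twisted, merely $\Fq$-linear operators $\phi_a$. Already for $n=0$ one has $h(\pfrak)\in\Fpp\otimes E[\pfrak]$, but its distinguished CRT component is an $\Fpp$-scalar combination of $\pfrak$-torsion points (a Gauss--Thakur-sum-type element), \emph{not} a point of $E[\pfrak]$; the relation $\phi_a(h)=a\cdot h$ does not force it to be one. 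The paper's map (Theorem \ref{thm:isom-jp}) instead post-composes with $\tr\otimes\id:\Fpp\otimes E(\CI)\to E(\CI)$ (equivalently, uses the residue pairing $x\mapsto\sum_a\res_\pfrak(xa\,\omega)\otimes e_a$), and it is precisely the $\Fq$-rationality of the trace that lands the values $f(\up{-n-1})$ in $E[\pfrak^{n+1}]$. Your own caveat about the "principal obstacle" points at the right place, but the proposed resolution is exactly the step that fails.

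The second gap is the direct-summand argument. You claim that injectivity of the map together with "exactness of reduction" yields injectivity of $(\hat{\Omega}_{A_\pfrak/\Fpp}\otimes_A\fsf(E))/\pfrak\to E[\pfrak]$. That implication is false: multiplication by $\up{}$ on $A_\pfrak$ is injective with zero reduction mod $\pfrak$. Injectivity after reduction is the actual content of the direct-summand statement, and in the paper it rests on the saturation property $\fsf(E)\cap\pfrak\hat{H}_\pfrak=\pfrak\,\fsf(E)$ (Lemma \ref{lem:H-saturated}), proved by a nontrivial computation with an $\Fq$-basis of $A$ adapted to a principal power of $\pfrak$; nothing in your proposal substitutes for it. Two smaller points: your injectivity paragraph only treats elements of the form $d\up{}\otimes h$ with $h\in\fsf(E)$, whereas injectivity is needed on all of $A_\pfrak\otimes_A\fsf(E)$ (this would follow from the mod-$\pfrak$ injectivity, i.e.\ again from the missing saturation step); and in the final part you cite Corollary \ref{cor:uniformizable-implies-constant-prank}, which is one of the three results the theorem is packaging, so the underlying lattice-index argument $\pfrak^{-1}\Lambda_E/\Lambda_E\cong E[\pfrak]$ for uniformizable $E$ still needs to be supplied. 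The concluding "equal rank plus direct summand implies isomorphism" step is fine once the above is repaired.
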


Hence, the space of special functions is in the center of these three cohomological realizations. 

\medskip 

In \cite{am:ptmsv}, we already considered this situation in the case of $A=\Fq[t]$ (the space of special functions was denoted $H_E$ there), and established the diagram
\[ \xymatrix{
 & \{ (e_i)_{i\geq 0}\in T_t(E) \mid \lim_{i \to \infty}\norm{e_i}=0\} 
 \ar@{<-}[d]^{\cong } &  \\
 \Lambda \ar[r]^{\delta}_{\cong } &  
 \fsf(E) \ar[r]^(.4){\iota}_(.4){\cong }
 & \Hom_{\TT}^\tau(\mot_\TT, \TT) \\
&  (\dumot \otimes_{\CI[t]} \TT)^\sigma \ar@{<-}[u]_(.4){\cong } & 
}\]
which is nothing else than the isomorphisms stated above for $A=\Fq[t]$ and $\pfrak=(t)$, since $\Omega_{\Fq[t]/\Fq}^1=\Fq[t]dt\cong \Fq[t]$. 
Also the special cases of Theorem \ref{mainthm:sf-in-tate-module} are shown there, i.e.~that $\fsf(E)$ naturally injects into the $t$-adic Tate module $T_t(E)$ of $E$ as a direct summand, and that, if moreover $E$ is uniformizable, the space $\fsf(E)$ even generates the $t$-adic Tate module.

Since we focus on the $A$-motive later on, we didn't search for a generalization of the isomorphism $\fsf(E)\to (\dumot \otimes_{\CI[t]} \TT)^\sigma$ involving the dual $A$-motive $\dumot$.

\medskip

Theorem \ref{mainthm:sf-in-tate-module} has nice consequences. Namely, let 
\[\trk{\pfrak}(E)=\dim_\Fpp(E[\pfrak]) = \rk_{A_\pfrak}(T_\pfrak(E)) \]
be the $\pfrak$-rank of $E$ for any nonzero prime $\pfrak$ of $A$.
Then Theorem \ref{mainthm:sf-in-tate-module} implies that for uniformizable Anderson modules $E$, the $\pfrak$-rank is independent of the prime $\pfrak$ (see Corollary \ref{cor:uniformizable-implies-constant-prank}), and furthermore that 
uniformizable Anderson modules are regular in the sense of Yu (see Corollary~\ref{cor:uniformizable-implies-regular}).

\subsection{$A$-motives of non-abelian Anderson $A$-modules}

In the abelian case, the $A$-motive $\mot(E)$ of an Anderson $A$-module over $L$ is -- by definition -- finitely generated locally free as $\LA$-module. In the non-abelian case, however, $\mot(E)$ is not finitely generated and might have torsion.
However, we show that after some localization, the motive becomes finitely generated free. More precisely, we prove the following theorem.

\begin{mainthm}\label{mainthm:A-motive} (Theorem \ref{thm:rational-A-motive-f-dim})\\ 
Denote by $\Quot(\LA)$ the field of fractions of $\LA$.
The $A$-motive $\mot:=\mot(E)$ associated to $E$ has the following properties:
\begin{enumerate}
\item $\Quot(\LA)\otimes_{\LA}\mot $ is a finite dimensional $\Quot(\LA)$-vector space. 
\item There is an element $f\in \LA$ such that one has:
Let $F$ be the multiplicative subset generated by $f$ and all its images under iterated $\tau$-twists, then $(\locLA)\otimes_{\LA} \mot$ is finitely generated as $(\locLA)$-module.
\item The element $f\in \LA$ and the corresponding set $F$ as in \eqref{item:2}, can be chosen such that $(\locLA)\otimes_{\LA} \mot$ is a free module.
\end{enumerate}
\end{mainthm}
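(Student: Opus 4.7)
The plan is to reduce to the one-variable case by choosing $u\in A$ with $A/\Fq[u]$ finite separable, and then to analyze $\mot = L\{\tau\}^d$ through its natural filtration $F_k\mot$ by $\tau$-degree, which satisfies $\dim_L F_k\mot = d(k+1)$ and is sent by right multiplication with $\phi_u$ into $F_{k+N}\mot$, where $N := \deg_\tau \phi_u$.

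For Part (1), a direct dimension count suffices. If $s > dN$ elements $m_1,\dots,m_s\in\mot$ were $L(u)$-linearly independent, the $s(J+1)$ vectors $\{u^j m_i : 0\leq j\leq J,\, 1\leq i\leq s\}$ would be $L$-linearly independent inside $F_{JN+c}\mot$ (with $c = \max_i \deg_\tau m_i$), contradicting $s(J+1) \leq d(JN+c+1)$ for $J$ large. Hence $\dim_{L(u)}(\mot\otimes_{L[u]}L(u)) \leq dN$; since $\Quot(\LA)$ is a finite extension of $L(u)$, this passes to $\Quot(\LA)\otimes_{\LA}\mot$.

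For Parts (2) and (3), set $r' := \dim_{\Quot(\LA)}(\Quot(\LA)\otimes\mot)$, pick $\Quot(\LA)$-basis elements $m_1,\dots,m_{r'}\in\mot$ of the generic fiber, and write $\phi_u = \sum_{k=0}^N B_k\tau^k$ with $B_k\in\Mat_d(L)$. I would choose $f\in\LA$ to be a nonzero element capturing both (i) a suitable minor of the leading matrix $B_N$, and (ii) annihilators of the $\LA$-torsion of $\mot$, engineered so that its inversion yields a recursion expressing $\tau^N$-iterates of a fixed finite set in terms of lower $\tau$-iterates and the $u$-action. Iterating this recursion one Frobenius twist at a time forces the inversion of $\tau^i(f)$ for every $i \geq 0$, which is precisely the multiplicative set $F$ in the statement. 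The module $\LA[F^{-1}]\otimes\mot$ is then generated by representatives of $\mot/\tau\mot \cong \Lie(E)$ together with a bounded number of their $\tau$-iterates. Being finitely generated and torsion-free of constant rank $r'$ over the Noetherian domain $\LA[F^{-1}]$, it is free on a Zariski-open subset of $\Spec\LA[F^{-1}]$; enlarging $F$ by an equation cutting out the complement, together with its $\tau$-twists to preserve the form of $F$, then gives a genuinely free module.

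The central difficulty is Part (2) in the genuinely non-abelian case where $B_N$ is singular. The naive choice $f = \det(B_N) \in L$ fails, and $f$ must encode enough $\LA$-information --- in concrete examples it turns out to be something like the characteristic polynomial of the $u$-action on $\ker(B_N)$ --- so that inverting $f$ and all its $\tau$-twists simultaneously makes $B_N$ effectively invertible on the non-torsion part of $\mot$ and kills all the $\LA$-torsion. Carefully tracking how Frobenius twists accumulate as the recursion is applied is what forces $F$ to be closed under $\tau$.
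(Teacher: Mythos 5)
Your argument for part (1) is correct and genuinely different from the paper's. The paper proves finite-dimensionality constructively: writing $t\cdot \transp{(\kappa_1,\ldots,\kappa_d)}=D\cdot \transp{(\kappa_1,\ldots,\kappa_d)}$ for an $L\{\tau\}$-basis of $\mot$, it manufactures a left multiplier $D'\in\Mat_{d\times d}(L(t)\{\tau\})$ such that $D'\cdot(D-t\cdot\one_d)$ has invertible leading $\tau$-coefficient, which yields an explicit recursion expressing high $\tau$-iterates of the $\kappa_j$ in terms of lower ones. Your dimension count on the filtration by $\tau$-degree is more elementary, avoids the leading-coefficient issue entirely, and even gives the clean bound $\rk(\mot)\le d\cdot\deg_\tau(\phi_u)$; but it is a pure existence statement and produces no relation that can be localized --- which is precisely why you then have nothing to work with in part (2).

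For parts (2) and (3) there is a genuine gap, and it is the one you yourself flag as ``the central difficulty'': when the leading coefficient matrix of $\phi_u$ (equivalently of $D-t\cdot\one_d$) is singular, you never actually construct $f$. Proposing ``a suitable minor'' of that matrix together with torsion annihilators, or ``something like the characteristic polynomial of the $u$-action on the kernel'', is a guess rather than a construction, and the relation $(D-t\cdot\one_d)\cdot\transp{(\kappa_1,\ldots,\kappa_d)}=0$ cannot be solved for the top $\tau$-iterates when the leading coefficient is singular, no matter which minors one inverts. The paper's resolution is to pass to the perfect closure $L^{\per}$, where the twist is invertible on coefficients, and to apply the left and right division algorithms in the skew polynomial ring $L^{\per}(t)\{\tau^{-1}\}$ (Anderson's elementary-divisor argument, \cite[Proposition~1.4.2]{ga:tm}) to diagonalize $\tau^{-s}\cdot(D-t\cdot\one_d)$; multiplying by a diagonal matrix of $\tau$-powers and twisting back by $\tau^{r}$ produces $D'$ with coefficients in $L(t)\{\tau\}$ whose product with $D-t\cdot\one_d$ has leading coefficient in $\GL_d(L^{\per}(t))\cap\Mat_{d\times d}(L(t))=\GL_d(L(t))$. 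The element $f$ is then the product of the numerator of the determinant of that leading coefficient with the finitely many denominators occurring in $D'$, and closing $F$ under $\tau$-twists is exactly what is needed to iterate the resulting recursion. Your reduction of part (3) to part (2) --- inverting annihilators of generators of the torsion, then using that a finitely generated torsion-free module over the Dedekind domain $\locLA$ is locally free and becomes free after one further localization --- matches the paper and is fine once part (2) is supplied.
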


This implies that the $A$-motives of non-abelian Anderson modules don't behave too bad, but that geometrically one has to exclude only a small set.
The analogous statement for dual $A$-motives is also valid (see Theorem \ref{thm:rational-dual-A-motive-f-dim}).

Due to Theorem \ref{mainthm:A-motive}, $\Quot(\LA)\otimes_{\LA}\mot $ has finite dimension, and we call that dimension the rank of the motive, $\rk(\mot)$. If $E$ is abelian, Anderson showed that for all primes $\pfrak$ of $A$, the $\pfrak$-rank defined above equals the rank $\rk(\mot)$ of the $A$-motive (see \cite[Proposition 1.8.3]{ga:tm}). In the non-abelian case, we show

\begin{mainthm}(Theorem \ref{thm:equality-of-ranks})\\
Let $E$ be an Anderson $A$-module over $L$ and $\mot$ its $A$-motive. Then for almost all primes $\pfrak$ of $A$, we have
\[ \trk{\pfrak}(E)=\rk(\mot). \]
If $E$ is uniformizable, this equality holds for all primes $\pfrak$ of $A$.
\end{mainthm}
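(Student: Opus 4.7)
The plan is to handle the two claims of the theorem separately. The uniformizable case follows essentially at once from Theorem~\ref{mainthm:sf-in-tate-module}, while the ``almost all primes'' statement requires a further argument based on the structural result Theorem~\ref{mainthm:A-motive} (or its counterpart for the dual $A$-motive).

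For uniformizable $E$, the strategy is as follows. The isomorphism $\fsf(E)\cong\Hom_\TT^\tau(\mot(E)_\TT,\TT)$ together with the generalization of Anderson's rigid-analytic-triviality criterion established earlier in the paper forces $\fsf(E)$ to be a free $A$-module of rank $\rk(\mot)$. Theorem~\ref{mainthm:sf-in-tate-module} then provides an isomorphism $\hat{\Omega}_{A_\pfrak/\Fpp}\otimes_A\fsf(E)\cong T_\pfrak(E)$ for every nonzero prime $\pfrak$. Since $\hat{\Omega}_{A_\pfrak/\Fpp}$ is free of rank one over $A_\pfrak$, comparison of $A_\pfrak$-ranks immediately yields $\trk{\pfrak}(E)=\rk(\mot)$.

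For the general statement, I would use the dual-motive analogue of Theorem~\ref{mainthm:A-motive} to produce an element $g\in\LA$ and corresponding multiplicative system $G$ for which $(\locLAG)\otimes_{\LA}\dumot$ is finitely generated free of rank $\rk(\mot)$. Define $S$ to be the set of primes $\pfrak\subset A$ at which the natural map $\dumot\to(\locLAG)\otimes_{\LA}\dumot$ fails to become an isomorphism after $\pfrak$-adic completion; equivalently, the primes lying below the height-one primes of $\LA$ in the support of the localisation cokernel. Outside of $S$, the $\pfrak$-adic completion of $\dumot$ agrees with that of a free module of rank $\rk(\mot)$, and the standard description of $T_\pfrak(E)$ as a limit of $\sigma$-invariants in suitable $\Hom$-spaces built from $\dumot/\pfrak^n\dumot$ then delivers $\trk{\pfrak}(E)=\rk(\mot)$. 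Passing from abelian to non-abelian modules here is legitimate because $\pfrak$-torsion of $E$ is controlled by the dual motive.

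The main obstacle is the second step: showing that the exceptional set $S$ is genuinely finite, and verifying that the $\sigma$-invariants computation realises the full $\rk(\mot)$-dimensional Tate module outside of $S$. In particular, one must ensure that localising by $G$ leaves the $\pfrak$-torsion structure of $\dumot$ unchanged for $\pfrak\notin S$, so that the formula for the Tate module in terms of the dual motive remains in force in this non-abelian setting. The uniformizable case sidesteps this entirely, because Theorem~\ref{mainthm:sf-in-tate-module} furnishes the required isomorphism uniformly in $\pfrak$.
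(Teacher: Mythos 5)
Your proposal runs into two problems.

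\textbf{Circularity in the uniformizable case.} You invoke ``the generalization of Anderson's rigid-analytic-triviality criterion'' to conclude that $\fsf(E)$ is free of rank $\rk(\mot)$, and then feed this into Theorem~\ref{mainthm:sf-in-tate-module} to get $\trk{\pfrak}(E)=\rk(\mot)$ for all $\pfrak$. But in the paper, the relevant implication of Theorem~\ref{thm:conditions-for-uniformizability} (namely \eqref{item:cond-E-unif}$\Rightarrow$\eqref{item:cond-lattice-of-full-rank}) is itself proved \emph{using} Theorem~\ref{thm:equality-of-ranks}, so you cannot appeal to it here. The non-circular argument is simpler: Corollary~\ref{cor:uniformizable-implies-constant-prank} shows $\trk{\pfrak}(E)=\rk_A(\fsf(E))$ is independent of $\pfrak$ when $E$ is uniformizable, and since the ``almost all primes'' assertion (once proven) gives $\trk{\pfrak}(E)=\rk(\mot)$ for some $\pfrak$, the constant value must equal $\rk(\mot)$ for every prime.

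\textbf{The core step of the ``almost all primes'' claim is left open.} You correctly identify that you need a local rank comparison $\trk{\pfrak}(E)=\rk(\mot)$ at each prime where the (dual) motive is free after localisation, and you correctly observe that Theorem~\ref{mainthm:A-motive} gives freeness outside a finite set of primes. But you flag the actual comparison --- ``verifying that the $\sigma$-invariants computation realises the full $\rk(\mot)$-dimensional Tate module'' --- as an unresolved obstacle. That step is the entire content of the theorem. The paper does it via the motive $\mot$ rather than the dual motive: Proposition~\ref{prop:local-rank} shows that if $\mot$ is free at $\pfrak$, then $\rk_{(\LA)_{(\pfrak)}}(\mot_{(\pfrak)})=\trk{\pfrak}(E)$. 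Its proof rests on Lemma~\ref{lem:iso-mod-pfrak}, which establishes by a Lang-isogeny argument (for $\mod\pfrak$) and a successive-approximation argument (using separable closedness of $\CI$ to solve $\tau(Y)-Y\equiv\Theta-\one_r$ step by step) that the $\pfrak$-adic completion $\tilde{\mot}_\pfrak$ has an $(\CA)_\pfrak$-basis of $\tau$-invariant elements; combining this with the adjunction isomorphisms and the identification $\hat{H}_\pfrak\cong\Hom_{\LA}^\tau(\mot,(\CA)_\pfrak)$ from Theorem~\ref{thm:h-isom-M-tate-dual} and the rank formula $\trk{\pfrak}(E)=\rk_{A_\pfrak}(\hat{H}_\pfrak)$ gives the desired equality. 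Note also that the paper explicitly declines to develop the dual-motive side of the theory, so the $\sigma$-invariant description of $T_\pfrak(E)$ via $\dumot/\pfrak^n\dumot$ that you want to use is not available in this generality without significant additional work.
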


\subsection{Taylor series expansions and Taylor coefficients}

The homomorphism in Theorem \ref{mainthm:sf-in-tate-module} can be expressed using Taylor series expansions, and we use those in the uniformizable case to give a description of $\pfrak$-power torsion in terms of values of hyperderivatives of special functions (see Section \ref{sec:torsion-points-as-values}). This is a direct generalization of the case of Drinfeld modules given in \cite[Proposition~3.1 \& Theorem~3.5]{am-rp:tcagfdte}.

So as before, let $\pfrak$ be a prime ideal of $A$, and $A_\pfrak$ the $\pfrak$-adic completion of $A$. Denote by $(\CA)_\pfrak$ the $\pfrak$-adic completion
\[ (\CA)_\pfrak:= \varprojlim_{n} \left( (A/\pfrak^{n+1})\otimes\CI\right). \]

In Section \ref{sec:taylor-series-expansion}, we introduce the hyperdifferential operators $\hdte{n}$ with respect to elements $t\in A$, 
as well as for $h\in (\CA)_\pfrak$, the corresponding Taylor series expansion at $\pfrak$ which is given by
\[ \D{t}(h)= \sum_{n=0}^\infty \hdt{n}{h}(\pfrak) X^n\in (\Fpp\otimes \CI)\ps{X}\]
having as Taylor coefficients the evaluations at $\pfrak$ of the hyperderivatives $\hdt{n}{h}$. (See Section \ref{sec:taylor-series-expansion} for definition and details.)

\begin{mainthm}(Theorem \ref{thm:torsion-as-special-values})\\
Let $(E,\phi)$ be a uniformizable Anderson $A$-module, and let $h_1,\ldots, h_r\in \fsf(E)$ be an $A_{\pfrak}$-basis of $A_{\pfrak}\otimes_A \fsf(E)\cong T_\pfrak(E)$.
Further, let $t\in A\setminus \Fq$ be such that $\partial_{\up{}}^{(1)}(t)\in A_{\pfrak}^\times$ where $\up{}$ is a uniformizer at $\pfrak$. 
Then
\begin{enumerate}
\item The hyperdifferential operators $(\hdte{n})_{n\geq 0}$ on $\Fq[t]$ uniquely extend to $A_{\pfrak}$.
\item For all $h\in A_{\pfrak}\otimes_A \fsf(E)$ and $n\geq 0$,  one has $\partial_{t}^{(n)}(h)(\pfrak)\in \Fpp\otimes E[\pfrak^{n+1}]$ as well as
\[    \partial_{t}^{(n)}(h)(\pfrak) \equiv \left( \partial_{\up{}}^{(1)}(t)^{-n}\cdot \partial_{\up{}}^{(n)}(h)\right) (\pfrak) \mod \Fpp\otimes E[\pfrak^{n}]. \]
\item Let $\tr: \Fpp\to \Fq$ be the trace map.
For all $n\geq 0$, the elements $\{ (\tr\otimes\id)\left(\partial_{t}^{(n)}(h_i)(\pfrak)\right) \mid i=1,\ldots, r\}$ provide an $(A/\pfrak^{n+1})$-basis of $E[\pfrak^{n+1}]$. 
\end{enumerate}
\end{mainthm}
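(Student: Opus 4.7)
The plan is to tackle the three parts in order, reducing part~(3) to Main Theorem~\ref{mainthm:sf-in-tate-module} via the explicit construction of $\jp$.

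For part~(1), I would use that the hypothesis $\partial_{\up{}}^{(1)}(t)\in A_\pfrak^\times$ forces $t-\tilde c$ to be a uniformizer of $A_\pfrak$, where $\tilde c\in A_\pfrak$ is any lift of the image of $t$ in $\Fpp$. By the Cohen structure theorem (applied to $A_\pfrak$ with residue field the finite, hence perfect, field $\Fpp$), one obtains an $\Fq$-algebra identification $A_\pfrak\cong \Fpp\ps{t-\tilde c}$ on which hyperdifferentials with respect to the uniformizer $t-\tilde c$ are defined by the usual power-series formula and visibly extend the $\hdte{n}$ on $\Fq[t]$. Uniqueness of the extension follows because a system of hyperderivations on the complete discrete valuation ring $A_\pfrak$ is determined by its values on a uniformizer together with continuity.

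For part~(2)(a), I would apply $\hdte{n}$ to both sides of $\phi_a(h)=a\cdot h$ for $a\in\pfrak^{n+1}$, regarded as an identity in $E((\CA)_\pfrak)$. The key commutation is $\hdte{n}\circ\tau = \tau\circ \hdte{n}$, because $\hdte{n}$ differentiates only in the $A_\pfrak$-direction while $\tau$ acts only on the $\CI$-direction; consequently $\hdte{n}$ commutes with every entry of $\phi_a = \sum_i M_{a,i}\tau^i$, $M_{a,i}\in\Mat_d(L)$. The Leibniz rule then gives
\[ \phi_a(\hdte{n}(h)) = \sum_{k=0}^{n}\hdte{k}(a)\cdot\hdte{n-k}(h). \]
Since $a\in\pfrak^{n+1} = (t-\tilde c)^{n+1}A_\pfrak$, each $\hdte{k}(a)$ with $k\leq n$ lies in $(t-\tilde c)^{n+1-k}A_\pfrak\subset\pfrak$, and evaluating at $\pfrak$ therefore yields $\phi_a(\hdt{n}{h}(\pfrak))=0$, whence $\hdt{n}{h}(\pfrak)\in \Fpp\otimes E[\pfrak^{n+1}]$.

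For part~(2)(b), I would use the change-of-variables formula for hyperderivatives. Writing $t-\tilde c = \partial_{\up{}}^{(1)}(t)\cdot \up{}\cdot v$ with $v\in A_\pfrak^\times$, $v\equiv 1\pmod\pfrak$, the chain rule yields
\[ \hdte{n} = \bigl(\partial_{\up{}}^{(1)}(t)\bigr)^{-n}\,\hde{\up{}}{n} + \sum_{k=0}^{n-1} c_{n,k}\,\hde{\up{}}{k} \]
for suitable $c_{n,k}\in A_\pfrak$. Part~(2)(a) applied with the uniformizer $\up{}$ in place of $t$ (which trivially satisfies the hypothesis since $\partial_{\up{}}^{(1)}(\up{}) = 1$) gives $\hd{\up{}}{k}{h}(\pfrak)\in \Fpp\otimes E[\pfrak^{k+1}]\subset \Fpp\otimes E[\pfrak^n]$ for all $k<n$, so evaluating the formula at $\pfrak$ produces the asserted congruence modulo $\Fpp\otimes E[\pfrak^n]$.

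For part~(3), I would invoke Main Theorem~\ref{mainthm:sf-in-tate-module}: for uniformizable $E$, the map $\hat\Omega_{A_\pfrak/\Fpp}\otimes_A\fsf(E)\xrightarrow{\,\jp\,}T_\pfrak(E)$ is an isomorphism. The hypothesis $\partial_{\up{}}^{(1)}(t)\in A_\pfrak^\times$ makes $dt$ an $A_\pfrak$-basis of $\hat\Omega_{A_\pfrak/\Fpp}$, so $\{dt\otimes h_i\}_{i=1}^r$ is an $A_\pfrak$-basis of the source. The description of $\jp$ established in Section~\ref{sec:taylor-series-expansion} sends $dt\otimes h$ to the sequence $\bigl((\tr\otimes\id)(\hdt{n}{h}(\pfrak))\bigr)_{n\geq 0}\in\varprojlim_n E[\pfrak^{n+1}] = T_\pfrak(E)$; hence these images form an $A_\pfrak$-basis of $T_\pfrak(E)$, and reducing modulo $\pfrak^{n+1}$ via $T_\pfrak(E)/\pfrak^{n+1}T_\pfrak(E)\cong E[\pfrak^{n+1}]$ produces the stated $(A/\pfrak^{n+1})$-basis. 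The main obstacle in this plan is precisely this last identification of $\jp$ with the trace-of-Taylor-coefficients formula: it requires matching the intrinsic construction of $\jp$ with the explicit analytic description, with the trace arising naturally from the different pairing implicit in the appearance of $\hat\Omega_{A_\pfrak/\Fpp}$ in Main Theorem~\ref{mainthm:sf-in-tate-module}.
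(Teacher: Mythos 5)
Your arguments for parts (1) and (2) are essentially correct, and your part (2)(a) is a genuinely different route from the paper's. You differentiate the defining identity $\phi_a(h)=a\cdot h$ for $a\in\pfrak^{n+1}$ directly, using that $\hdte{n}$ (which acts only on the $A$-factor of $\Epfrak$) commutes with $\phi_a$ (which acts only on the $E(\CI)$-factor), together with the Leibniz rule and Lemma~\ref{lem:hd-p-adic-continuous}; this is clean, self-contained, and works uniformly for any admissible $t$ without passing through $\up{}$ first. The paper instead proves the $\up{}$-case by reading off the Taylor coefficients of the $\up{}$-expansion of $h$ through the explicit inverse of $\jp$ from Theorem~\ref{thm:isom-jp} (Proposition~\ref{prop:special-values}) and only then passes to general $t$ via the chain rule, which is exactly your part (2)(b). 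What your shortcut does not give you is the identification of $(\tr\otimes\id)\bigl(\hd{\up{}}{n}{h_i}(\pfrak)\bigr)$ with the Tate-module values $f_i(\up{-n-1})$, and that identification is where the basis statement in part (3) actually comes from. (For part (1) the paper simply cites a reference; your Cohen-structure argument is a reasonable substitute, though the uniqueness claim should be routed through the triviality of any extension on $\Fpp$ and continuity, as you indicate.)

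Part (3) is where the gap sits, and it is slightly worse than a bookkeeping issue. The identity you invoke, $\tilde{\jp}(dt\otimes h)=\bigl((\tr\otimes\id)(\hdt{n}{h}(\pfrak))\bigr)_{n\geq 0}$, is not established in Section~\ref{sec:taylor-series-expansion}, and it is \emph{false} if $T_\pfrak(E)\cong\varprojlim_n E[\pfrak^{n+1}]$ is taken, as in the paper, via evaluation at $\up{-n-1}$: computing $\res_\pfrak(\up{-n-1}a\,dt)$ in the $(t-\tilde c)$-expansion of $a$ recovers $\hdt{n}{a}(\pfrak)$ only up to a unit and up to lower-order Taylor coefficients. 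The identity does become exact if one evaluates at $(t-\tilde c)^{-n-1}$, since $t-\tilde c$ is a uniformizer and residues against $dt=d(t-\tilde c)$ then pick out precisely the $n$-th Taylor coefficient; as $(t-\tilde c)^{-n-1}$ and $\up{-n-1}$ generate the same $A/\pfrak^{n+1}$-submodule of $K_\pfrak/A_\pfrak$, the basis conclusion would survive. So you must either carry out that residue computation with the uniformizer $t-\tilde c$ (the exact analogue of the paper's proof of Proposition~\ref{prop:special-values}), or, more economically and as the paper does, first prove part (3) for $\up{}$ and then transfer to $t$ using the congruence you already established in (2)(b), checking that multiplying $\hd{\up{}}{n}{h_i}(\pfrak)$ by the unit $\hd{\up{}}{1}{t}^{-n}(\pfrak)\in\Fpp^\times$ before applying $\tr\otimes\id$ replaces $f_i(\up{-n-1})$ by $f_i(\lambda\up{-n-1})$ for a unit $\lambda$, and that a basis of $E[\pfrak^{n+1}]$ perturbed by elements of $E[\pfrak^{n}]=\pfrak\, E[\pfrak^{n+1}]$ remains a basis by Nakayama.
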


As a special case by taking $n=0$, we recover the Gauss-Thakur sums $h(\pfrak)$ considered in \cite{qg-am:sfgtshrd}.

Part two and three tell us several things. First, we obtain all $\pfrak^{n+1}$ torsion from the Taylor coefficients $\hdt{j}{h_i}(\pfrak)$ for $0\leq j\leq n$ and $1\leq i\leq r$, and second that this holds for any chosen element $t\in A$, as long as the derivative $\partial_{\up{}}^{(1)}(t)$ is invertible in $A_{\pfrak}^\times$. Third, if we change $t$, the basis of $E[\pfrak^{n+1}]/E[\pfrak^{n}]$ is only changed by a scalar, which is moreover explicit.

\subsection{Galois representations}

In \cite{am-rp:tcagfdte}, the investigations on the Taylor coefficients led to a description  of the $\pfrak$-adic Galois representation in terms of a rigid analytic trivialization of the corresponding $t$-motive, and to a direct proof that the image of the Galois representation lies in the motivic Galois group. This was possible, as in the case of Drinfeld modules such a rigid analytic trivialization is given by Anderson generating functions and their Frobenius twists.

As mentioned above, such a link is also present in the general case of uniformizable Anderson modules $E$ via the isomorphism 
$\fsf(E) \xrightarrow{\cong} \Hom_{\TT}^\tau\left(\mot(E)_{\TT},\TT\right)$
 (see \cite[Theorem~3.3]{qg-am:sfgtshrd}). Combining these two links of the space of special functions $\fsf(E)$, we obtain the Galois representation in terms of a rigid analytic trivialization also in the general case of a (possibly non-abelian) uniformizable Anderson module.
Special cases of such a description are also present in \cite{cc-mp:aipldm}, \cite{am:atmttgr}.

As above, we let $\pfrak$ be a prime ideal of $A$, and let $(\CA)_\pfrak$ be the $\pfrak$-adic completion
\[ (\CA)_\pfrak:= \varprojlim_{n} \left( (A/\pfrak^{n+1})\otimes\CI\right) \]
as well as by $\TT_{(\pfrak)}\subseteq \Quot(\TT) $ the localization 
of $\TT$ away from the divisors of $\pfrak$.

We remark that $\TT_{(\pfrak)}$ naturally embeds into $(\CA)_\pfrak$ (see Proposition~\ref{prop:inclusion-of-tt_p-in-padic}).

\begin{mainthm}\label{mainthm:Galois-representation}(see Theorem \ref{thm:galois-representation} and Theorem \ref{thm:image-in-motivic-group})\\
Assume that $E$ is uniformizable, and that the localization $\mot(E)_{(\pfrak)}=(\LA)_{(\pfrak)}\otimes_{\LA} \mot$ of the $A$-motive at the prime $\pfrak$ is free and finitely generated.
Let $\Upsilon\in \GL_r( \TT_{(\pfrak)})$ be a rigid analytic trivialization of $\mot(E)_{(\pfrak)}$ with respect to some $(\LA)_{(\pfrak)}$-basis of $\mot_{(\pfrak)}$. 
Using the embedding $\TT_{(\pfrak)}\to (\CA)_\pfrak$, we consider $\Upsilon\in \GL_r((\CA)_\pfrak)$.
\begin{enumerate}
\item The coefficients of $\Upsilon$ lie in $(\LsepA)_\pfrak$.
\item The $\pfrak$-adic Galois representation attached to $E$ (via the Tate module) is given by
\begin{eqnarray*}
\varrho_\pfrak: \Gal(L^{\sep}/L) &\to& \GL_r(A_\pfrak) \\
\gamma &\mapsto & \Upsilon\cdot \gamma(\Upsilon)^{-1}.
\end{eqnarray*}
with respect to an appropriate choice of basis. 
\item The image of the Galois representation $\varrho_\pfrak$ lies in $\Gamma_\mot(K_\pfrak)$, where $\Gamma_\mot$ denotes the motivic Galois group of $\mot$, and $K_\pfrak=\Quot(A_\pfrak)$.
\end{enumerate}
\end{mainthm}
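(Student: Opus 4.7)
The strategy is to exploit the pair of isomorphisms
\[ T_\pfrak(E) \cong \hat{\Omega}_{A_\pfrak/\Fpp}\otimes_A \fsf(E) \quad\text{and}\quad \fsf(E) \cong \Hom_{\TT}^\tau(\mot(E)_\TT,\TT), \]
provided by Main Theorem \ref{mainthm:sf-in-tate-module} and \cite{qg-am:sfgtshrd}, which place the space of special functions simultaneously at the heart of the $\pfrak$-adic realization and of the motivic rigid analytic trivialization. Choosing an $A_\pfrak$-basis $h_1,\dots,h_r$ of $A_\pfrak\otimes_A\fsf(E)$ and an $(\LA)_{(\pfrak)}$-basis $m_1,\dots,m_r$ of $\mot(E)_{(\pfrak)}$, the rigid analytic trivialization $\Upsilon$ has entries $\Upsilon_{ij}=h_i(m_j)\in \TT_{(\pfrak)}$, and the ``appropriate choice of basis'' of $T_\pfrak(E)$ in part (2) is the $A_\pfrak$-basis corresponding to $(h_1,\dots,h_r)$ under the first isomorphism above.

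For part (1), the plan is to pass through the embedding $\TT_{(\pfrak)}\hookrightarrow (\CA)_\pfrak$ of Proposition~\ref{prop:inclusion-of-tt_p-in-padic}, which realizes each entry of $\Upsilon$ as its Taylor expansion at $\pfrak$. The coefficients of this expansion are, up to normalization, the hyperderivative values $\hdt{n}{h_i}(\pfrak)$ evaluated on the $m_j$; by Theorem~\ref{thm:torsion-as-special-values}(ii) these values lie in $\Fpp\otimes E[\pfrak^{n+1}]\subset L^\sep\otimes E$, and therefore the entries of $\Upsilon$ belong to $(\LsepA)_\pfrak$.

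For part (2), note that $\Gal(L^\sep/L)$ acts on $\TT = A\hat{\otimes}\CI$ through its action on $L^\sep\subset \CI$, and preserves $\fsf(E)$ since $E$ and the relations $\phi_a(h)=a\cdot h$ are $L$-rational. Define $\varrho_\pfrak(\gamma)\in \GL_r(A_\pfrak)$ by the matrix of $\gamma$ in the basis $(h_1,\dots,h_r)$; by Main Theorem \ref{mainthm:sf-in-tate-module} and Theorem~\ref{thm:torsion-as-special-values}(iii), this agrees with the Tate-module representation in the corresponding basis of $T_\pfrak(E)$. Since the $m_j$ are $L$-rational and hence Galois-fixed, applying $\gamma$ to the identity $\Upsilon_{ij}=h_i(m_j)$ gives $\gamma(\Upsilon)=\varrho_\pfrak(\gamma)^{-1}\Upsilon$, which rearranges to $\varrho_\pfrak(\gamma)=\Upsilon\cdot\gamma(\Upsilon)^{-1}$ as claimed.

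Part (3) is the main obstacle and requires a tannakian argument in the style of \cite{cc-mp:aipldm}, \cite{am:atmttgr}, and \cite{am-rp:tcagfdte}. The motivic Galois group $\Gamma_\mot$ is characterized as the stabilizer of every $\LA$-polynomial identity satisfied by the entries of $\Upsilon$ that is induced by a morphism between tensor subquotients of $\mot$. Such identities have coefficients in $\LA\subset L^\sep\otimes A$, so they are $\Gal(L^\sep/L)$-equivariant; hence $\gamma(\Upsilon)$ satisfies the same identities, and the change-of-basis $\Upsilon\gamma(\Upsilon)^{-1}$ preserves them, so that it lies in $\Gamma_\mot(K_\pfrak)$. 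The delicate point is to transport the classical Papanikolas-style argument (set over $\Quot(\TT)$) to the $\pfrak$-adic localization $\TT_{(\pfrak)}\subset(\CA)_\pfrak$, which will be handled by comparing the two fiber functors via the embedding above.
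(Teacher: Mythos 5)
Parts (1) and (2) of your proposal are essentially the paper's argument: the representation is read off from the Galois action on the basis of $\tau$-invariant elements of $\dual{(\mot_{\TT_{(\pfrak)}})}$ (equivalently, on special functions), using that the basis $m_1,\dots,m_r$ of $\mot_{(\pfrak)}$ and its dual basis are Galois-fixed. One imprecision: the values of the $\tau$-equivariant homomorphisms $\iota(h_i)$ on the $m_j$ are the entries of $\Upsilon^{-1}$, not of $\Upsilon$ (the $\tau$-invariant basis of the dual is $(\mu_1,\dots,\mu_r)\cdot\Upsilon^{-1}$ with $\mu_i(m_j)=\delta_{ij}$), so the identity ``$\Upsilon_{ij}=h_i(m_j)$'' is wrong as written, though the cocycle still rearranges to $\Upsilon\cdot\gamma(\Upsilon)^{-1}$ once the conventions are straightened out. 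Your derivation of (1) from the hyperderivative/torsion results of Theorem~\ref{thm:torsion-as-special-values} is a workable detour; the paper gets (1) directly from $\Hom_{\LA}^\tau(\mot,(\CA)_\pfrak)=\Hom_{\LA}^\tau(\mot,(\LsepA)_\pfrak)$ (Lemma~\ref{lem:compatibilities}), but both reduce to the $L^\sep$-rationality of torsion.

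Part (3) is where there is a genuine gap: you describe a strategy and explicitly defer the step that constitutes the proof. First, the characterization of $\Gamma_\mot$ you invoke (``stabilizer of identities induced by morphisms between tensor subquotients'') is not what the paper uses and would itself require justification; the paper identifies $\Gamma_\mot$ with the difference Galois group $\GPsi$ of the equation $\tau(\Psi)=\Theta\Psi$ with $\Psi=\Upsilon^{-1}$, citing \cite[Corollary~7.8]{am:capvt}, and membership of $C\in\GL_r(K_\pfrak)$ in $\GPsi(K_\pfrak)$ is the condition $f(\Psi\otimes C)=0$ for \emph{all} $f$ in the ideal $I$ of polynomial relations over $Q=\Quot(\LA)$ satisfied by $\Psi$. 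Second, the ``delicate point'' you postpone is precisely the proof: one must observe that $\Psi$ and $C=\Upsilon\cdot\gamma(\Upsilon)^{-1}$ both have entries in the common ring $(\LsepA)_\pfrak$ — this is exactly where part (1) enters — so that the formal expression $f(\Psi\otimes C)$ equals $f(\Psi\cdot C)$; and then
\[ f(\Psi\cdot C)=f\bigl(\Psi\cdot\Psi^{-1}\cdot\gamma(\Psi)\bigr)=f(\gamma(\Psi))=\gamma\bigl(f(\Psi)\bigr)=0, \]
since $f$ has coefficients in $Q$, which is fixed by $\gamma$. Neither of these two steps appears in your sketch, and ``comparing the two fiber functors via the embedding'' does not by itself supply them.
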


\section{Generalities}\label{sec:notation}

\subsection{Basic objects}

Let $\Fq$ be a finite field with $q$ elements and characteristic $p$, and let $C$ be a smooth projective geometrically irreducible curve over $\Fq$. We fix a closed point $\infty$ on $C$ and consider $A=H^0(C\setminus \{\infty\},\cO_C)$, the ring of rational functions on $C$ that are regular outside $\infty$. This is an algebra of Krull dimension $1$ over the field~$\Fq$. 

Let $K$ be the function field of $C$ (or equivalently, the fraction field of $A$). The degree and residue field of $\infty$ will be denoted by $d_{\infty}$ and $\FI$, respectively, and the associated norm will be $|\cdot|$. Further, let $\CI$ be the completion at a place above $\infty$ of an algebraic closure of $K$, to which we extend $|\cdot|$. 
By convention, every unlabeled tensor will be over $\Fq$. 

For an ideal $(0)\ne \afrak\lhd A$, we write $\deg(\afrak):=\dim_{\Fq}(A/\afrak)$, and accordingly for $a\in A\setminus\{0\}$, $\deg(a):=\dim_{\Fq}(A/(a))$.

For a prime ideal $\pfrak\lhd A$,\footnote{When we speak of a prime ideal of $A$, we always mean a prime ideal different from $(0)$. At some places, we will nevertheless emphasize that the prime ideal is non-zero.} the $\pfrak$-adic completion is denoted by
$A_\pfrak= \varprojlim_n A/\pfrak^n$, and $K_\pfrak$ is the field of fractions of $A_\pfrak$.
Further, $\Fpp= A/\pfrak=A_\pfrak/(\pfrak)$ denotes the residue field.\\
As it is well-known, there is a unique homomorphism of $\Fq$-algebras $\Fpp\hookrightarrow A_\pfrak$ such that the composition with the projection map $A_\pfrak\to \Fpp$ is the identity on $\Fpp$, and for any uniformizer $\up{}\in A$ of the prime ideal $\pfrak$, the completion $A_\pfrak$ is isomorphic to the power series ring $\Fpp\ps{\up{}}$, and $K_\pfrak$ is isomorphic to the Laurent series ring $\Fpp\ls{\up{}}$.

In the whole paper, for each prime ideal $\pfrak$ of $A$, we will fix a uniformizer $\up{}$, and the isomorphism $\Fpp\ps{\up{}}\isom A_\pfrak$ which induces the isomorphisms $\Fpp\ls{\up{}}\isom K_\pfrak$ as well as $ \Fpp\ps{\up{}}/(\up{n})\isom A_\pfrak/(\pfrak^n)=A/\pfrak^n$. 
In this way, we consider $A_\pfrak$, $K_\pfrak$ and the quotients $A/\pfrak^n=A_\pfrak/(\pfrak^n)$
 as $\Fpp$-algebras. Be aware that $A$ is not an $\Fpp$-algebra unless $\Fpp=\Fq$.
 
\medskip

We fix $L$ an intermediate field $K\subseteq L\subseteq \CI$, and denote the natural inclusion by $\ell:K\to L$. The separable algebraic closure of $L$ inside $\CI$ will be denoted by $L^\sep$.

Let $(E,\phi)$ be an Anderson $A$-module over $L$ of dimension $d$ (and generic characteristic) as in \cite[Definition 5.2]{uh-akj:pthshcff}, i.e.~$E\cong \GG_{a,L}^d$ as $\Fq$-vector space schemes and
$\phi:A\to \End_{\vs}(E)$ is a ring homomorphism, such that for all $a\in A$: $\dphi_a-\ell(a):\ \LieE\to \LieE$ is nilpotent. Here, $\LieE$ denotes the usual Lie algebra of the group scheme $E$ over $\CI$ (which is a $\CI$-vector space of dimension $d$), and 
$\dphi_a\in \End_{\CI}(\LieE)$ denotes the endomorphism on the Lie algebra of $E$ induced by $\phi_a:=\phi(a)$.

Attached to an Anderson $A$-module $E$ is its exponential map 
$\e:\LieE\to E(\CI)$, which is the unique $\Fq$-linear map with the properties 
\[\forall a\in A, x\in\LieE:\ \phi_a(\e(x))=\e(\dphi_a(x)) \]
and $d(\e)=\id_{\LieE}$.

The kernel of the exponential map is called the \emph{period lattice}, denoted by $\Lambda_E$. By the functional equation,
the period lattice is an $A$-module (using the action via $\dphi$). Further, it is well-known that the exponential map is a local isometry, and hence $\Lambda_E$ is discrete in $\LieE$ (see e.g.~\cite[Lemma 5.4]{uh-akj:pthshcff}).

The Anderson module $E$ is called \emph{uniformizable} if the exponential map $\e$ is surjective.

\begin{rem}\label{rem:torsion-in-Lsep}
For all $a\in A\setminus \{0\}$, we have $\ell(a)\ne 0$, so $\dphi_a$ is an isomorphism. Hence $\phi_a:E(\CC_\infty)\to E(\CC_\infty)$ has finite kernel \[ E[a]:=\ker(\phi_a), \]
called the \emph{$a$-torsion} of $E$.
More general for an ideal $(0)\neq \afrak\lhd A$, we define
\[  E[\afrak]:= \bigcap_{a\in \afrak} E[a]. \]
This is an $A/\afrak$-module (via $\phi$). 
Since $\phi_a$ is a finite morphism defined over $L$ and $\dphi_a$ is an isomorphism, the torsion points have coefficients in $L^\sep$ i.e.~$E[\afrak]\subseteq E(L^\sep)$.
\end{rem}

For prime ideals $\pfrak$, we consider the $\pfrak$-adic Tate-module
\[ T_\pfrak(E):=\Hom_A(K_\pfrak/A_\pfrak, E(\CC_\infty)).\]
It is easy to see that for $f\in T_\pfrak(E)$, the image of each $x\in K_\pfrak/A_\pfrak$ is in $E[\pfrak^{n}]\subseteq E(L^\sep)$ for some $n\geq 0$. Hence, indeed
\[ T_\pfrak(E)=\Hom_A(K_\pfrak/A_\pfrak, E(\CC_\infty))=\Hom_A(K_\pfrak/A_\pfrak, E(L^\sep)).\]

The Tate-module can also be described as a projective limit $\varprojlim_{n} E[\pfrak^{n+1}]$
(cf.~\cite[Sect.~5.7]{uh-akj:pthshcff}).\\
As $\up{}\in A$ is a uniformizer, one obtains maps $\phi_{\up{}}:E[\pfrak^{n+1}]\to  E[\pfrak^n] $ for all $n\geq 0$ and the projective limit $\varprojlim_{n} E[\pfrak^{n+1}]$ with respect to these maps.
Then it is not hard to see, that the map
\[  T_\pfrak(E):=\Hom_A(K_\pfrak/A_\pfrak, E(\CC_\infty))\isom\Hom_A(\Fpp\ls{\up{}}/\Fpp\ps{\up{}}, E(\CC_\infty)) \longrightarrow \varprojlim_{n} E[\pfrak^{n+1}] \]
sending $f\in T_\pfrak(E)$ to $(f(\up{-n-1}))_{n\geq 0}$ is the desired isomorphism.
This isomorphism implies that if $\{f_1,\ldots, f_r\}$ is an $A_\pfrak$-basis of $T_\pfrak(E)$, then for each $n\geq 0$, the set of values $\{f_1(\up{-n-1}),\ldots, f_r(\up{-n-1})\}$ is an $A/\pfrak^{n+1}$-basis of $E[\pfrak^{n+1}]$.

\medskip

On the ring $\LA$, we consider the \emph{Frobenius twist} $\tau:\LA \to \LA$ given by $\tau(a\otimes l)=a\otimes l^q$, i.e. being the identity on $A$ and the Frobenius homomorphism on $L$. 

For an Anderson $A$-module $E$ over $L$, one considers the $(\LA)$-module
\[ \mot(E):=\Hom_{\vs}(E,\GG_{a,L}) \]
where the $A$-action is induced by the $A$-action on $E$, and the $L$-action is induced by scalar multiplication on $\GG_{a,L}$.
In addition, there is a $\tau$-semilinear endomorphism on $\mot(E)$ induced by the $q$-power Frobenius endomorphism on $\GG_{a,L}$.

As $E$ is a vector space scheme isomorphic to $\GG_{a,L}^d$, the module $\mot(E)$ is always a free and finitely generated module over 
$L\{\tau\}=\End_{\vs}(\GG_{a,L})$ of dimension $d$.

The Anderson module $E$ is called \emph{abelian} if $\mot(E)$ is also finitely generated as $\LA$-module. 
As in the abelian case, we will call this module \emph{the $A$-motive of $E$}, although in the non-abelian case, the functor $E\mapsto \mot(E)$ does not share the nice properties that one requires for a category of motives.

\subsection{Completions} \label{sec:completions}

In the ``$A=\Fq[t]$-setting'', we had to deal with the Tate-algebra $\tate$ and the power series ring
$\CI\ps{t}$ (see \cite{am:ptmsv}). The first is the completion of the polynomial ring $\CI[t]$ with respect to the Gauss norm, the second is the completion of $\CI[t]$ with respect to the ideal~$(t)$.

In the following, we will introduce the more general notions needed later.

\begin{itemize}
\item Given an $\Fq$-basis $B(A)$ of $A$, we can define a norm on $\CA$ via
\[   \norm{\sum_{a\in B(A)}^{\rm finite} a\otimes c_a\, } := \max \{  \betr{c_a} \mid a\in B(A) \}, \]
which is well-defined as every such sum has only finitely many $c_a$ different from $0$. 
A short computation shows  (see \cite[Proposition~2.2]{qg-am:sfgtshrd}) that this norm is independent of the chosen $\Fq$-basis of $A$.
We define the completion of $\CA$ with respect to this norm
\[ \TT:=A \widehat{\otimes} \CI
: =\left\{ \sum_{a\in B(A)} a\otimes c_a \mid c_a\in \CI,\, \lim_{\deg(a)\to \infty} \betr{c_a}=0 \right\},\footnote{Be aware that this definition of $\TT$ is indeed the usual completion of $\CA$ with respect to the given norm, since $\CI$ is a complete ring. If one replaces $\CI$ by a more general normed ring $F$, one would have to replace $F$ by its completion in the explicit description.} \]
which is also a normed ring by
\[   \norm{\sum_{a\in B(A)} a\otimes c_a\, } := \max \{  \betr{c_a} \mid a\in B(A) \}. \]
As the curve $C$ was assumed to be geometrically irreducible, the rings $\CA$ and $\TT$ are integral domains.
\item For all $0\neq \pfrak\lhd A$ prime ideals, and intermediate fields $K\subseteq F\subseteq \CI$, we write
\[  (\FA)_{\pfrak} := \varprojlim_{n} \left( (A/\pfrak^{n+1})\otimes F\right). \]
We will mainly use $(\CA)_{\pfrak}$, but also $(A\otimes L^\sep)_\pfrak$.

\end{itemize}

\begin{rem}\label{rem:notation-in-completion}
\begin{enumerate}
\item As every intermediate field $K\subseteq F\subseteq \CI$ is an infinite dimensional $\Fq$-vector space, the ring $(\FA)_\pfrak$ is strictly larger than $A_\pfrak\otimes F$.
\item Every element of $(\FA)_\pfrak$ can be written as a series $\sum_{k=0}^\infty \up{k}x_k =\lim\limits_{n\to \infty}\sum_{k=0}^n \up{k}x_k $ with $x_k\in \Fpp \otimes F$, since $A/\pfrak^{n+1}$ is isomorphic to $\Fpp[\up{}]/(\up{n+1})$
\item If $\deg(\pfrak)>1$, the rings $A_\pfrak\otimes \CI$ and $(\CA)_\pfrak$ have zero-divisors, as they contain a copy of $\Fpp\otimes \Fpp$.
\end{enumerate}
\end{rem}

\begin{prop}\label{prop:inclusion-of-tt-in-padic} \ 
For all primes $0\neq \pfrak\lhd A$, there is a natural inclusion of rings
\[  \TT \hookrightarrow (\CA)_{\pfrak}. \]
\end{prop}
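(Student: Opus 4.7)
My plan is to construct the map via the reduction maps $\CA \to (A/\pfrak^{n+1}) \otimes \CI$ and extend them to $\TT$ by a convergence argument, then check injectivity using the Krull intersection theorem and the non-archimedean structure of $\CI$.

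For the construction, I observe that for every $n \geq 0$, the quotient $A/\pfrak^{n+1}$ is a finite-dimensional $\Fq$-vector space (of dimension $(n+1)\deg(\pfrak)$), so $(A/\pfrak^{n+1}) \otimes \CI$ is finite-dimensional over $\CI$. Given $f = \sum_{a \in B(A)} a \otimes c_a \in \TT$, I fix an $\Fq$-basis $\{e_1,\dots,e_N\}$ of $A/\pfrak^{n+1}$ and write each reduction $\bar a = \sum_j \mu_j(a) e_j$ with $\mu_j(a) \in \Fq$. Then the formal image
\[ \varphi_n(f) \;:=\; \sum_{j=1}^N e_j \otimes \Bigl(\sum_{a \in B(A)} \mu_j(a)\, c_a\Bigr) \;\in\; (A/\pfrak^{n+1}) \otimes \CI \]
makes sense because $|\mu_j(a) c_a| \leq |c_a| \to 0$ and $\CI$ is complete. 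A standard check shows $\varphi_n$ is independent of the chosen basis and compatible with the transition maps $(A/\pfrak^{n+2})\otimes\CI \twoheadrightarrow (A/\pfrak^{n+1})\otimes\CI$, so the $\varphi_n$ assemble to a map $\varphi: \TT \to (\CA)_\pfrak$. That $\varphi$ is a ring homomorphism is routine: each $\varphi_n$ extends the ring homomorphism $\CA \to (A/\pfrak^{n+1})\otimes \CI$ bilinearly, and the multiplication and addition in $\TT$ are continuous for the Gauss norm.

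The main obstacle is injectivity, for which I would argue as follows. Suppose $\varphi(f) = 0$ but $f \neq 0$. Let $M := \max_a |c_a| > 0$ (achieved since $|c_a| \to 0$) and set $T := \{a \in B(A) \mid |c_a| = M\}$, which is a finite subset of $B(A)$; pick some $a_* \in T$. By the Krull intersection theorem applied to the Noetherian ring $A$, $\bigcap_n \pfrak^{n+1} = 0$, so the decreasing sequence of $\Fq$-subspaces $\mathrm{span}_\Fq(T) \cap \pfrak^{n+1}$ inside the finite-dimensional space $\mathrm{span}_\Fq(T)$ eventually vanishes; fix $n$ large enough that $T$ maps $\Fq$-linearly independently into $A/\pfrak^{n+1}$. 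Then I can choose an $\Fq$-linear functional $\gamma: A/\pfrak^{n+1} \to \Fq$ with $\gamma(\bar a_*) = 1$ and $\gamma(\bar a) = 0$ for all $a \in T \setminus \{a_*\}$. Pairing the vanishing relation $\varphi_n(f) = 0$ with this $\gamma$ yields
\[ 0 \;=\; \sum_{a \in B(A)} \gamma(\bar a)\, c_a \;=\; c_{a_*} + \sum_{a \in B(A) \setminus T} \gamma(\bar a)\, c_a. \]
For $a \notin T$ one has $|c_a| < M$ strictly, and $|\gamma(\bar a)| \leq 1$, so the tail sum has absolute value strictly less than $M$ by the non-archimedean inequality, while $|c_{a_*}| = M$. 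This forces the right-hand side to have absolute value $M \neq 0$, a contradiction. Hence $\varphi$ is injective.

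The essential input is the interplay between the Gauss-norm convergence defining $\TT$ (which controls the tails) and the $\pfrak$-adic filtration of $A$ (which, via Krull intersection, ensures any prescribed finite subset of $B(A)$ can be "separated" by an $\Fq$-linear functional factoring through some $A/\pfrak^{n+1}$); this is the step that I expect requires the most care in a full write-up.
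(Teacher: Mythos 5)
Your proof is correct and follows essentially the same route as the paper: extend the reductions $\CA\to (A/\pfrak^{n+1})\otimes\CI$ continuously using completeness of the finite-dimensional target, and for injectivity isolate the finitely many coefficients of maximal absolute value and observe that they remain $\Fq$-linearly independent modulo $\pfrak^{n+1}$ for $n$ large. The only (cosmetic) difference is that you extract the contradiction by pairing with a separating $\Fq$-linear functional, whereas the paper computes the Gauss norm of the image directly; both hinge on the same ultrametric separation of the leading terms from the tail.
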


\begin{proof}
For all $n\geq 0$, the natural projection $\pi_n:\CA\to (A/\pfrak^{n+1})\otimes \CI$ is continuous.
Furthermore,  as $(A/\pfrak^{n+1})$ is a finite dimensional $\Fq$-vector space,
$(A/\pfrak^{n+1})\otimes \CI$ is a finite dimensional $\CI$-vector space, and hence complete. Therefore, the natural projection $\pi_n$ can be extended continuously to a homomorphism
\[  \hat{\pi}_n:\TT=A \hat{\otimes} \CI\to (A/\pfrak^{n+1})\otimes \CI. \]
As the continuous extension is unique, it is clear that the $\hat{\pi}_n$ are compatible with the projections
$\pi_{n+1,n}:(A/\pfrak^{n+2})\otimes \CI\to (A/\pfrak^{n+1})\otimes \CI$. Hence, they induce a homomorphism to the projective limit
\[  \hat{\pi}: \TT \to (\CA)_\pfrak. \]
As the $\pi_n$ are homomorphisms of rings, so are the $\hat{\pi}_n$, and also $\hat{\pi}$ is a homomorphism of rings.
We claim that this homomorphism is injective.\\
Let $0\ne x=\sum_{a\in B(A)} a\otimes c_a\in A \hat{\otimes} \CI$ where as before $B(A)$ denotes an arbitrary $\Fq$-basis of $A$. By definition, we have
$\lim_{\deg(a)\to \infty} \betr{c_a}=0$, hence the set
\[  J:=\{ a\in B(A)\mid \betr{c_a}=\norm{x} \}\]
is finite. Let $x_0:=\sum_{a\in J} a\otimes c_a\in \CA$ and 
\[ x_1:=x-x_0=\sum_{a\in B(A)\setminus J} a\otimes c_a\in A \hat{\otimes} \CI, \]
then $\norm{x_1}=\max\{ \betr{c_a} \mid a\in B(A)\setminus J\} <\norm{x}=\norm{x_0}$.\\
Now choose $n\geq 0$ large enough such that the residue classes of $J$ in $A/\pfrak^{n+1}$ are still $\Fq$-linearly independent. Therefore, for such $n$, we have $\norm{\pi_n(x_0)}=\norm{x_0}$. On the other hand, $\norm{\hat{\pi}_n(x_1)}\leq \norm{x_1}<\norm{x_0}$. Hence,
\[ \norm{\hat{\pi}_n(x)}=\norm{\hat{\pi}_n(x_0)+\hat{\pi}_n(x_1)}=\max\{\norm{\pi_n(x_0)},  \norm{\hat{\pi}_n(x_1)} \} = \norm{x_0}\neq 0. \]
Therefore, $x\notin \Ker(\hat{\pi}_n)$, and $x\notin \Ker(\hat{\pi})$. As $x\neq 0$ was chosen arbitrarily, this implies that $\Ker(\hat{\pi})=0$, i.e.~$\hat{\pi}$ is injective.
\end{proof}

\begin{rem}\label{rem:TT-in-padic}
We can obtain this embedding quite explicit.
Namely, choose an $\Fq$-basis $B(A)$ of $A$ by first choosing lifts of a basis of $A/\pfrak$, and extending this set successively by lifts of bases of $\pfrak/\pfrak^2$, $\pfrak^2/\pfrak^3$ etc.
Then the elements in $(\CA)_\pfrak$ are just given by the formal infinite sums
$\sum_{a\in B(A)} a\otimes c_a$ for arbitrary $c_a\in \CI$, whereas the elements in
$\TT=A\hat{\otimes} \CI$ are given by those sums
$\sum_{a\in B(A)} a\otimes c_a$ for which the $c_a$ tend to zero.

Be aware that for $\deg(\pfrak)>1$, the $\up{}$-expansion in Remark \ref{rem:notation-in-completion} is not of this form, since $\Fpp\not\subset A$.
\end{rem}

For later use, we already give the definition of the evaluation at a prime $\pfrak$.

\begin{defn}\label{defn:evaluation}
For $h\in (\CA)_\pfrak$, we denote by $h(\pfrak)$ its image in $(\CA)_\pfrak/\pfrak(\CA)_\pfrak\isom \Fpp\otimes \CI$, and call it the \textbf{evaluation of $h$ at $\pfrak$}.
\end{defn}

\subsection{Localizations}

In this subsection, we introduce our notation for localizations at finite products of prime ideals.

\begin{defn}\label{def:localization}
For an integral domain $R$, let $\Quot(R)$ be its field of fractions. If $\afrak$ is an ideal in $R$, we set
\[  R_{(\afrak)} := \bigcap_{\pfrak\mid \afrak} R_{(\pfrak)} \subseteq \Quot(R) \]
to be the intersection of the localizations at all primes $\pfrak$ dividing $\afrak$.

More general, if $S\supseteq R$ is an integral domain containing $R$, and $\afrak$ an ideal in $R$, then $S_{(\afrak)}:=S_{(S\afrak)}$ denotes
the corresponding localization at the extension $S\afrak$ of the ideal $\afrak$.
\end{defn}

\begin{exmp}
The cases that we will have to deal with are the rings $(\LA)_{(\pfrak)}$ and $(\CA)_{(\pfrak)}$ for a prime ideal $\pfrak\subseteq A$,
as well as $\TT_{(\pfrak)}$.

We should also emphasize the difference between $(\CA)_{(\pfrak)}$ and $(\CA)_{\pfrak}$. The former being the localization of $\CA$ defined just now, and the latter being the completion defined in the previous paragraph.
\end{exmp}

\begin{prop}\label{prop:inclusion-of-tt_p-in-padic} \
For all primes $0\neq \pfrak\lhd A$, there is a natural inclusion of rings
\[  \TT_{(\pfrak)} \hookrightarrow (\CA)_{\pfrak}. \]
extending the embedding $\TT\to (\CA)_{\pfrak}$ given in Proposition \ref{prop:inclusion-of-tt-in-padic}.
\end{prop}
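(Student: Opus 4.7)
The plan is to extend the embedding $\hat{\pi}\colon \TT \hookrightarrow (\CA)_\pfrak$ from Proposition \ref{prop:inclusion-of-tt-in-padic} to $\TT_{(\pfrak)}$ via the universal property of localization. Since by definition $\TT_{(\pfrak)} = \bigcap_{\qfrak \mid \pfrak\TT} \TT_{(\qfrak)}$, the essential point is to verify that every $g \in \TT$ which becomes invertible in $\TT_{(\pfrak)}$ is mapped by $\hat{\pi}$ to a unit of $(\CA)_\pfrak$; injectivity of the resulting extension will then follow immediately from the injectivity already proved for $\hat{\pi}$.

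First I would characterize these units: an element $g \in \TT$ is invertible in $\TT_{(\pfrak)}$ iff $g$ avoids every prime $\qfrak \supset \pfrak\TT$ of $\TT$, equivalently iff $\bar g \in \TT/\pfrak\TT$ is a unit (an element of a commutative ring is a unit iff it lies in no prime ideal). The projection $\hat{\pi}_0 \colon \TT \to \Fpp \otimes \CI$ appearing in the proof of Proposition \ref{prop:inclusion-of-tt-in-padic} kills $\pfrak\TT$ and so factors through $\TT/\pfrak\TT$; hence if $\bar g$ is a unit, then $\hat{\pi}_0(g) = g(\pfrak)$ is a unit in $\Fpp \otimes \CI$. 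The heart of the argument is then a Hensel-style lifting in $(\CA)_\pfrak$. By construction $(\CA)_\pfrak$ is $\pfrak$-adically complete; it is also separated, since the natural projections to $(A/\pfrak^{n+1})\otimes\CI$ contain $\pfrak^{n+1}(\CA)_\pfrak$ in their kernels and these kernels intersect to $0$. Moreover, $\CA$ being a finitely generated $\CI$-algebra is Noetherian, so $(\CA)_\pfrak/\pfrak(\CA)_\pfrak \cong \CA/\pfrak\CA = \Fpp\otimes\CI$. If $\hat{\pi}(g)$ is a unit modulo $\pfrak(\CA)_\pfrak$, choose $h \in (\CA)_\pfrak$ with $\hat{\pi}(g)\cdot h = 1+x$ for some $x \in \pfrak(\CA)_\pfrak$; then $x^n \in \pfrak^n(\CA)_\pfrak$ for all $n$, so the geometric series $\sum_{n\geq 0}(-x)^n$ converges in the $\pfrak$-adic topology and inverts $1+x$, making $\hat{\pi}(g)$ a unit in $(\CA)_\pfrak$.

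Setting $S := \{g \in \TT : \bar g \text{ is a unit in } \TT/\pfrak\TT\}$, the universal property of localization now yields a ring homomorphism $S^{-1}\TT \to (\CA)_\pfrak$ extending $\hat{\pi}$. To identify $S^{-1}\TT$ with $\TT_{(\pfrak)}$ I would invoke Noetherianity of $\TT$ (which is finite over a classical Tate algebra in one variable over $\CI$), so that $\pfrak\TT$ has only finitely many minimal primes and a standard common-denominator argument gives $\bigcap_{\qfrak} \TT_{(\qfrak)} = S^{-1}\TT$. Injectivity of the extension $\TT_{(\pfrak)} \hookrightarrow (\CA)_\pfrak$ is then immediate: if $a/g$ maps to zero, multiplication by the unit $\hat{\pi}(g)$ gives $\hat{\pi}(a) = 0$, hence $a = 0$ by Proposition \ref{prop:inclusion-of-tt-in-padic}. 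The main obstacle I anticipate is the Hensel-lifting step, especially when $\deg(\pfrak) > 1$: then $\Fpp\otimes\CI$ (and hence $(\CA)_\pfrak$) has zero divisors and is not local, so one must be careful about what ``unit modulo $\pfrak$'' means; the geometric-series argument itself, however, only uses $\pfrak$-adic completeness and separatedness, both of which hold by construction.
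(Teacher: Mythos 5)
Your proposal is correct and follows essentially the same route as the paper's (very brief) proof: an element of $\TT$ avoiding all primes over $\pfrak\TT$ has invertible evaluation $g(\pfrak)\in\Fpp\otimes\CI$, hence becomes a unit in the $\pfrak$-adically complete ring $(\CA)_\pfrak$, so the embedding extends by the universal property of localization. You simply make explicit the steps the paper leaves implicit (the unit-lifting via the geometric series and the identification of $\TT_{(\pfrak)}$ with $S^{-1}\TT$), and these details are all sound.
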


\begin{proof}
If $h\in \TT$ is not contained in any prime dividing $\pfrak\TT$, then its evaluation $h(\pfrak)\in \Fpp\otimes \CI$ is invertible. Hence, the image of $h$ in $(\CA)_\pfrak$ is invertible. Therefore the inclusion $\TT\hookrightarrow (\CA)_\pfrak$ can be extended to the localization $\TT_{(\pfrak)}$.
\end{proof}

For later use, we state the following lemma which is immediate from the previous discussion.

\begin{lem}\label{lem:invertible-elements}
For $h\in \TT_{(\pfrak)}$ the following are equivalent:
\begin{enumerate}
\item $h$ is invertible in $\TT_{(\pfrak)}$,
\item $h(\pfrak)\in \Fpp \otimes \CI$ is invertible,
\item $h$ is invertible in $(\CA)_\pfrak$.
\end{enumerate}
\end{lem}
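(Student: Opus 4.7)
The strategy is to reduce all three invertibility conditions to invertibility in the single semisimple $\CI$-algebra $\Fpp\otimes\CI$. Since $\Fq$ is perfect, $\Fpp/\Fq$ is separable, and $\CI$ contains $\overline{\Fq}$, so
\[ \Fpp\otimes\CI \;\cong\; \prod_{\sigma} \CI, \]
with $\sigma$ running over the $\Fq$-embeddings $\Fpp\hookrightarrow\CI$. In particular $\Fpp\otimes\CI$ is a finite product of $\deg(\pfrak)$ fields, and an element is a unit iff each of its components is non-zero.

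For (2)$\Leftrightarrow$(3) I would use the fixed uniformizer to write $A/\pfrak^{n+1}\cong\Fpp[\up{}]/(\up{n+1})$, which upon tensoring with $\CI$ and passing to the limit gives
\[ (\CA)_\pfrak \;\cong\; (\Fpp\otimes\CI)\ps{\up{}} \;\cong\; \prod_\sigma \CI\ps{\up{}}, \]
a finite product of discrete valuation rings whose Jacobson radical is $\pfrak(\CA)_\pfrak$ and whose residue algebra is $\Fpp\otimes\CI$. An element of such a product is a unit iff each component has non-zero constant term, i.e.\ iff its image $h(\pfrak)$ in $\Fpp\otimes\CI$ is a unit.

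For (1)$\Leftrightarrow$(2) I would first identify $\TT/\pfrak\TT$. Choosing an $\Fq$-basis $B(A)$ of $A$ whose first $\deg(\pfrak)$ elements lift an $\Fq$-basis of $\Fpp$ and whose remaining elements lie in $\pfrak$, the coefficient-wise projection defines a surjection $\TT\to\Fpp\otimes\CI$; its kernel is exactly the closed ideal $\pfrak\TT$, and by construction it coincides with the evaluation $h\mapsto h(\pfrak)$ via the embedding of Proposition~\ref{prop:inclusion-of-tt-in-padic}. Hence $\TT/\pfrak\TT\cong\Fpp\otimes\CI$, and the primes of $\TT$ containing $\pfrak\TT$ correspond bijectively to the (maximal) primes of $\Fpp\otimes\CI$. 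For $h\in\TT$, invertibility in $\TT_{(\pfrak)}$ amounts to $h$ lying outside each prime dividing $\pfrak\TT$, which by this correspondence is equivalent to $h(\pfrak)$ being a unit. For a general $h=f/g\in\TT_{(\pfrak)}$ with $f,g\in\TT$ and $g$ already a unit in $\TT_{(\pfrak)}$, both (1) and (2) for $h$ reduce to the corresponding statements for $f$, which are covered by the case just treated. The main obstacle is really only the bookkeeping around $\TT/\pfrak\TT$: one has to confirm that the closed ideal $\pfrak\TT$ captures the full kernel of the coefficient-wise reduction map and that the resulting quotient is honestly the algebraic tensor product $\Fpp\otimes\CI$ rather than a completion thereof. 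This is not conceptually deep and follows from $\pfrak$ being finitely generated together with $\Fpp$ being finite-dimensional over $\Fq$.
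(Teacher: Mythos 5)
Your argument is correct and follows essentially the same route the paper takes (the Lemma is declared immediate from the proof of Proposition~\ref{prop:inclusion-of-tt_p-in-padic}, which rests on exactly your chain: avoiding the primes over $\pfrak\TT$ $\Leftrightarrow$ invertibility of $h(\pfrak)$ in $\Fpp\otimes\CI$ $\Leftrightarrow$ invertibility in the $\pfrak$-adically complete ring $(\CA)_\pfrak$); your explicit decompositions $\Fpp\otimes\CI\cong\prod_\sigma\CI$ and $(\CA)_\pfrak\cong(\Fpp\otimes\CI)\ps{\up{}}$ are just a more concrete packaging. The one point you flag --- that the kernel of the coefficientwise reduction $\TT\to\Fpp\otimes\CI$ is the algebraic ideal $\pfrak\TT$ and not merely its closure --- does go through: expressing each basis element of $B(A)$ lying in $\pfrak$ in terms of a fixed finite generating set of $\pfrak$ with degree-controlled coefficients, the rearranged coefficient series still converge in $\TT$, so the quotient is honestly $\Fpp\otimes\CI$.
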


\subsection{Hyperderivations and Taylor series expansions}\label{sec:taylor-series-expansion}

We give a short presentation of hyperderivations, also called higher derivations (see e.g.~\cite[\S 27]{hm:crt}, or \cite{kc:dp}).

\begin{defn}
A hyperderivation on an $\Fq$-algebra $R$ is a family of $\Fq$-linear operators $(\hde{}{n}:R\to R)_{n\geq 0}$ (called \emph{hyperdifferential operators}) such that
\begin{enumerate}
\item $\hde{}{0}=\id_R$,
\item $\forall n\geq 0$, $\forall f,g\in R$: $\hd{}{n}{f\cdot g} = \sum\limits_{i+j=n} \hd{}{i}{f}\cdot \hd{}{j}{g}$ \textit{(generalized Leibniz rule)}
\end{enumerate}
\end{defn}

\begin{rem}
By definition, the hyperdifferential operator $\partial:=\hde{}{1}$ is a derivation, and 
$\hd{}{n}{f}$ corresponds to $\tfrac{1}{n!}\partial^n(f)$ in characteristic zero.
\end{rem}

For many calculations, it is much more convenient to consider the corresponding map
\[ \D{}: R\to R\ps{X}, f\mapsto \sum_{n \geq 0} \hd{}{n}{f} X^n, \] called the \textit{(generic) Taylor series expansion}. Namely, the $\Fq$-linearity and the generalized Leibniz rule are equivalent to that the map $\D{}$ is a homomorphism of $\Fq$-algebras. In particular, $\D{}$ is determined by the images of generators of the $\Fq$-algebra $R$.

\begin{prop}\label{prop:unique-extensions} (see \cite[Theorem 27.2]{hm:crt} and \cite[Theorem~5 \& 6]{kc:dp})\\
Hyperderivations can be uniquely extended to \'etale ring extensions (in particular to
localizations and to separable algebraic field extensions).\\
If $R$ is equipped with an absolute value, and all $\hde{}{n}$ are continuous, there is a unique way to extend them continuously to a hyperderivation on the completion of $R$ with respect to the absolute value.
\end{prop}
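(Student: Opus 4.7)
The plan is to reformulate a hyperderivation $(\hde{}{n})_{n\geq 0}$ on $R$ as the single $\Fq$-algebra homomorphism $\D{}: R \to R\ps{X}$ described just before the statement, since giving a hyperderivation is equivalent to giving such a $\D{}$ whose reduction modulo $(X)$ is $\id_R$. Extending the hyperderivation to an $R$-algebra $S$ then amounts to extending $\D{}$ to a ring homomorphism $\D{}_S: S \to S\ps{X}$ that reduces modulo $(X)$ to the structural map $R \hookrightarrow S$; uniqueness of the hyperderivation extension translates to uniqueness of such a lift.

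For a localization $S = T^{-1}R$, I would observe that for any $t \in T$, the series $\D{}(t) = t + \hde{}{1}(t)X + \hde{}{2}(t)X^2 + \ldots$ has constant term $t$, so its image in $S\ps{X}$ has invertible constant term and is therefore a unit in $S\ps{X}$. By the universal property of localization, $\D{}$ then extends uniquely to the required map $\D{}_S: S \to S\ps{X}$.

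For the étale (and in particular separable algebraic) case, the key is the formal smoothness of $R \to S$ applied to the projection $S\ps{X} \twoheadrightarrow S$, whose kernel $(X)$ is complete with nilpotent quotients $(X)/(X^n)$. Existence and uniqueness of the lift $S \to S\ps{X}$ are then immediate, and it automatically reduces modulo $(X)$ to $R \hookrightarrow S$. Concretely, when $S = R[\alpha]$ with separable minimal polynomial $m$, Hensel's lemma in the $X$-adically complete ring $S\ps{X}$ produces a unique root of $\D{}(m)(Y) = 0$ with constant term $\alpha$ (since $m'(\alpha)$ is a unit by separability), and defining $\D{}_S(\alpha)$ to be this root extends $\D{}$ to $S$.

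Finally, for the completion of $R$ with respect to an absolute value, each $\Fq$-linear $\hde{}{n}$ is continuous by hypothesis, so it extends uniquely and continuously to the completion $\hat R$. The hyperderivation axioms at each fixed degree involve only finitely many operators and the continuous ring operations on $\hat R$, so they pass to limits and remain valid on $\hat R$. I expect the main obstacle to be executing the étale case cleanly: one must verify that the Hensel lift is simultaneously a ring homomorphism into $S\ps{X}$ and restricts to $\D{}$ on $R$, which is transparent via formal smoothness but requires some unpacking if one prefers an explicit construction from a primitive element.
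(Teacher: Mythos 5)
Your proposal is correct, and it is essentially the argument behind the references the paper cites for this proposition (the paper gives no proof of its own, only pointing to \cite[Theorem 27.2]{hm:crt} and \cite[Theorems 5--6]{kc:dp}): reformulating the hyperderivation as the $\Fq$-algebra homomorphism $\D{}:R\to R\ps{X}$ reducing to $\id_R$ modulo $(X)$, extending it to localizations via the universal property, to (formally) \'etale extensions via the unique lifting property along $S\ps{X}\twoheadrightarrow S$ (equivalently Hensel's lemma for a separable primitive element), and to completions by continuity, with the Leibniz identities persisting because they are closed conditions. The only cosmetic slip is calling the lifting input ``formal smoothness'' where uniqueness of the lift requires formal \'etaleness, which \'etale extensions of course satisfy.
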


The hyperderivations that are relevant in this paper are the 
hyperderivations with respect to some element in the coefficient ring $A$.

\begin{exmp}\label{exmp:hyperderivations-wrt-pfrak}
Let $\pfrak$ be a prime ideal of the coefficient ring $A$, and let $\up{}$ be a uniformizer at $\pfrak$. The hyperderivation $(\hde{\up{}}{n})_{n\geq 0}$ with respect to $\up{}$ is given on $\Fq[\up{}]$ by
\[  \hd{\up{}}{n}{\sum_{i=0}^m c_i\up{}^i} = \sum_{i=n}^m \binom{i}{n} c_i\up{}^{i-n} \]
for any $\sum_{i=0}^m c_i\up{}^i\in \Fq[\up{}]$ where $\binom{i}{n}$ is the binomial coefficient considered in $\Fp\subseteq \Fq$.

As $\up{}$ is a uniformizer at $\pfrak$, the (non-complete) localization $A_{(\pfrak)}$ is \'etale over $\Fq[\up{}]_{(\up{})}$, and hence by Proposition~\ref{prop:unique-extensions}, the hyperderivation $(\hde{\up{}}{n})_{n\geq 0}$ can be uniquely extended to a hyperderivation on $A_{(\pfrak)}$.
\end{exmp}

The next lemma shows that the hyperdifferential operators are also continuous with respect to the $\pfrak$-adic topology, and hence can further be extended to a hyperderivation on~$A_\pfrak$.

\begin{lem}\label{lem:hd-p-adic-continuous}
If $(\hde{}{n})_{n\geq 0}$ is a family of hyperdifferential operators on an $\Fq$-algebra $R$, and $\afrak$ an ideal of $R$, then for all $k\geq n\geq 0$: $\hd{}{n}{\afrak^k}\subseteq \afrak^{k-n}$.
\end{lem}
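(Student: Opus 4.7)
The plan is to reduce to generators of $\afrak^k$, namely products of $k$ elements of $\afrak$, and then expand a hyperdifferential operator applied to such a product using a $k$-fold generalized Leibniz rule.

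More precisely, I would first upgrade the two-factor Leibniz rule to a $k$-factor version. The cleanest route is to use the Taylor series expansion $\D{}:R\to R\ps{X}$: the axioms for a hyperderivation are equivalent to $\D{}$ being an $\Fq$-algebra homomorphism, so for any $a_1,\dots,a_k\in R$,
\[ \D{}(a_1\cdots a_k)=\D{}(a_1)\cdots \D{}(a_k)\in R\ps{X}. \]
Comparing the coefficients of $X^n$ on both sides gives the multi-factor Leibniz rule
\[ \hde{}{n}(a_1\cdots a_k)=\sum_{n_1+\cdots+n_k=n}\hde{}{n_1}(a_1)\cdots\hde{}{n_k}(a_k), \]
where the sum is over tuples $(n_1,\dots,n_k)$ of non-negative integers summing to~$n$.

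Next, I would observe that $\afrak^k$ is additively generated by products $a_1\cdots a_k$ with each $a_i\in\afrak$, so by $\Fq$-linearity of $\hde{}{n}$ it suffices to check the containment on such a generator. For a fixed tuple $(n_1,\dots,n_k)$ with $\sum n_i=n$, the number of indices $i$ with $n_i\geq 1$ is at most $n$; hence at least $k-n$ of the indices satisfy $n_i=0$, and for those $\hde{}{n_i}(a_i)=\hde{}{0}(a_i)=a_i\in\afrak$. The remaining factors $\hde{}{n_i}(a_i)$ lie in $R$. Thus every summand in the $k$-fold Leibniz expansion is an element of $\afrak^{k-n}$, and so is the whole sum.

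Combining these two steps, every generator of $\afrak^k$ is sent by $\hde{}{n}$ into $\afrak^{k-n}$, and hence $\hde{}{n}(\afrak^k)\subseteq \afrak^{k-n}$, as claimed. There is no real obstacle here; the only point that requires a moment's thought is justifying the $k$-fold Leibniz rule cleanly, which is why I would invoke the homomorphism property of the Taylor series map rather than iterating the two-factor rule inductively.
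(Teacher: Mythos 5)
Your proof is correct and follows essentially the same route as the paper: expand $\hde{}{n}(a_1\cdots a_k)$ by the $k$-fold Leibniz rule, note that each tuple $(n_1,\dots,n_k)$ with $\sum n_i=n$ has at least $k-n$ zero entries so each summand lies in $\afrak^{k-n}$, and conclude by additivity. Your explicit justification of the multi-factor Leibniz rule via the homomorphism property of $\D{}$ is a minor elaboration of a step the paper simply asserts.
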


\begin{proof}
Let $a_1,\ldots, a_k\in \afrak$. From the generalized Leibniz rule we obtain
\[ \hd{}{n}{a_1\cdots a_k}=\sum_{i_1+\ldots+i_k=n} \hd{}{i_1}{a_1}\cdots  \hd{}{i_1}{a_k}. \]
As $\#\{ j\mid i_j=0\}\geq k-n$ for each tuple $(i_1,\ldots, i_k)$ of the sum, we obtain 
$\hd{}{n}{a_1\cdots a_k}\in \afrak^{k-n}$. By additivity, we obtain $\hd{}{n}{f}\in \afrak^{k-n}$ for any $f\in \afrak^k$.
\end{proof}

\begin{prop}
Via the isomorphism $A_\pfrak\isom \Fpp\ps{\up{}}$, the hyperdifferential operators $\hde{\up{}}{n}$ are given by
\[ \hd{\up{}}{n}{\sum_{j=0}^\infty c_j\up{j}} = \sum_{j=n}^\infty \binom{j}{n} c_j \up{j-n}.\]
\end{prop}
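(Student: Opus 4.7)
The plan is to define a candidate family of operators on $\Fpp\ps{\up{}}$ by the claimed formula and verify that it coincides with the unique extension of $(\hde{\up{}}{n})_{n\geq 0}$ from $\Fq[\up{}]$ to $A_\pfrak$. The argument will proceed through two uniqueness steps reflecting the factorization $\Fq[\up{}] \subset A_{(\pfrak)} \subset A_\pfrak \cong \Fpp\ps{\up{}}$: first, by Example \ref{exmp:hyperderivations-wrt-pfrak} and Proposition \ref{prop:unique-extensions}, the hyperderivation extends uniquely from $\Fq[\up{}]$ to $A_{(\pfrak)}$ along the étale extension $\Fq[\up{}]_{(\up{})} \subset A_{(\pfrak)}$; second, by Lemma \ref{lem:hd-p-adic-continuous} with $\afrak = \pfrak$, the extended operators are continuous for the $\pfrak$-adic topology, so by the continuity part of Proposition \ref{prop:unique-extensions} they extend uniquely and continuously to the completion $A_\pfrak$.

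Given this uniqueness, it suffices to exhibit \emph{some} continuous family of $\Fq$-linear operators $(\tilde{\partial}^{(n)})_{n\geq 0}$ on $\Fpp\ps{\up{}}$ that satisfies the hyperderivation axioms and restricts to $(\hde{\up{}}{n})_{n\geq 0}$ on $\Fq[\up{}]$. I take $\tilde{\partial}^{(n)}$ to be the operator given by the formula on the right-hand side of the proposition. Since $\binom{0}{n} = 0$ for $n \geq 1$, this operator is $\Fpp$-linear (treats $\Fpp$ as constants), and the restriction to $\Fq[\up{}]$ is immediate from Example \ref{exmp:hyperderivations-wrt-pfrak}. Continuity is clear because $\tilde{\partial}^{(n)}$ lowers the $\up{}$-adic valuation by at most $n$.

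The remaining verification is the generalized Leibniz rule. On monomials, this reduces to the Vandermonde identity $\binom{i+j}{n} = \sum_{m+l=n} \binom{i}{m}\binom{j}{l}$, and by $\Fpp$-bilinearity together with the $\up{}$-adic convergence of all involved sums, it extends to arbitrary elements of $\Fpp\ps{\up{}}$. Once these three properties are in hand, uniqueness forces $\tilde{\partial}^{(n)} = \hde{\up{}}{n}$ on all of $A_\pfrak$, which is precisely the claim.

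The main obstacle is the bookkeeping in the uniqueness chain: one must ensure that the restriction of $\tilde{\partial}^{(n)}$ to $A_{(\pfrak)} \subset A_\pfrak$ matches the étale extension there before the continuous extension further propagates to the completion. However, since a hyperderivation on $\Fpp\ps{\up{}}$ restricts to a hyperderivation on every $\Fq$-subalgebra, and both $A_{(\pfrak)}$ and the étale extension of $(\hde{\up{}}{n})$ live inside $A_\pfrak$, the uniqueness at each stage of Proposition \ref{prop:unique-extensions} identifies them, so this obstacle is formal rather than substantive.
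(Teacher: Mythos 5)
Your argument is correct, but it runs in the opposite direction from the paper's. The paper does not build a candidate operator and appeal to uniqueness; it computes the canonical extension directly: the operators are trivial on $\Fq$, hence -- by the uniqueness of extensions to separable algebraic field extensions in Proposition~\ref{prop:unique-extensions} -- trivial on the coefficient field $\Fpp\subset A_\pfrak$, and the formula then drops out of $\hd{\up{}}{n}{\up{j}}=\binom{j}{n}\up{j-n}$ by the Leibniz rule, additivity and continuity. Your verify-then-identify route costs you the explicit check of the generalized Leibniz rule via the Vandermonde identity (which the paper's argument avoids entirely) and a more delicate uniqueness chain through $\Fq[\up{}]\subset A_{(\pfrak)}\subset A_\pfrak$; what it buys is that you never need to invoke the embedding $\Fpp\hookrightarrow A_\pfrak$ and the triviality of the operators on $\Fpp$ as a separate step -- that fact is encoded in your formula via $\binom{0}{n}=0$.

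One claim in your final paragraph is false as stated: a hyperderivation on $\Fpp\ps{\up{}}$ does \emph{not} restrict to a hyperderivation on every $\Fq$-subalgebra, since the operators need not preserve the subalgebra (for instance $\hd{\up{}}{1}{\up{3}}=3\up{2}\notin\Fq[\up{3}]$ when $p\neq 3$). What actually rescues your uniqueness step is that for an element $x$ of an \'etale extension, the values $\hd{}{n}{x}$ of any hyperderivation are determined recursively by its values on the base ring (this is how Proposition~\ref{prop:unique-extensions} is proved), so two continuous hyperderivations on $A_\pfrak$ agreeing on $\Fq[\up{}]$ are forced to agree element by element on $A_{(\pfrak)}$, and then on all of $A_\pfrak$ by density and continuity, using Lemma~\ref{lem:hd-p-adic-continuous}. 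With that repair the proof is complete.
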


\begin{proof}
The hyperdifferential operators are trivial on $\Fq$, and $\Fpp$ is a finite separable extension of $\Fq$. Hence by Proposition~\ref{prop:unique-extensions}, they are trivial on $\Fpp$, too. The formula then follows by additivity and continuity of the hyperdifferential operators.
\end{proof}

For later use, we also extend the hyperdifferential operators $\hde{\up{}}{n}$ to $A_\pfrak\otimes \CI$ by $\CI$-linearity, and then continuously to $(\CA)_{\pfrak}= \varprojlim_{n} \left( (A/\pfrak^{n+1})\otimes\CI\right)$. Explicitly, for $B(A)$ as in Remark \ref{rem:TT-in-padic} and $\sum_{a\in B(A)}a\otimes c_a\in (\CA)_\pfrak$, we have
\[ \hd{\up{}}{n}{\sum_{a\in B(A)}a\otimes c_a} = \sum_{a\in B(A)} \hd{\up{}}{n}{a}\otimes c_a, \]
the right hand side being a series of elements in $A_\pfrak\otimes \CI$ which converges in $(\CA)_\pfrak$ by Lemma~\ref{lem:hd-p-adic-continuous}.

\begin{defn}\label{defn:taylor-series-at-up}
For $h\in (\CA)_\pfrak$, let $h(\pfrak)\in \Fpp\otimes \CI$ be its evaluation at $\pfrak$ as given in Definition \ref{defn:evaluation}.
Using this evaluation, we define the \emph{Taylor series expansion at $\pfrak$} (which depends on the uniformizer $\up{}$) to be the homomorphism of $\Fq$-algebras
\[\Dup:(\CA)_\pfrak \to (\Fpp\otimes \CI)\ps{X}, h\mapsto \sum_{n=0}^\infty \hd{\up{}}{n}{h}(\pfrak)X^n.\]
\end{defn}

\begin{prop}
The Taylor series expansion at $\pfrak$ is an isomorphism.
\end{prop}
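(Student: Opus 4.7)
The plan is to show that $\Dup$ is both injective and surjective by using the explicit $\up{}$-series expansion of elements in $(\CA)_\pfrak$ described in Remark~\ref{rem:notation-in-completion}(2), and then computing the Taylor coefficients directly.

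First I would write an arbitrary element $h \in (\CA)_\pfrak$ uniquely as a convergent series
\[ h = \sum_{k=0}^\infty \up{k} x_k, \qquad x_k \in \Fpp \otimes \CI, \]
using the identification $A_\pfrak/(\pfrak^{n+1}) \cong \Fpp[\up{}]/(\up{n+1})$ together with the chosen splitting $\Fpp \hookrightarrow A_\pfrak$. Because the hyperdifferential operators $\hde{\up{}}{n}$ act trivially on the subring $\Fpp \otimes \CI$ (by Proposition~\ref{prop:unique-extensions} applied to the separable extension $\Fpp/\Fq$, and by $\CI$-linearity) and are continuous by Lemma~\ref{lem:hd-p-adic-continuous}, one gets the explicit formula
\[ \hd{\up{}}{n}{h} = \sum_{k=n}^\infty \binom{k}{n} \up{k-n} x_k. \]

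Next I would evaluate at $\pfrak$, which is nothing else than setting $\up{} = 0$ in this series. Only the $k=n$ term survives, and it gives $\hd{\up{}}{n}{h}(\pfrak) = x_n$. Hence
\[ \Dup(h) = \sum_{n=0}^\infty x_n X^n. \]
From this formula both injectivity and surjectivity are immediate: if $\Dup(h) = 0$ then $x_n = 0$ for all $n$ and so $h = 0$; and given any power series $\sum_{n\geq 0} y_n X^n \in (\Fpp \otimes \CI)\ps{X}$, the element $\sum_{n\geq 0} \up{n} y_n \in (\CA)_\pfrak$ maps to it under $\Dup$. Since $\Dup$ was already defined to be a homomorphism of $\Fq$-algebras, this proves it is an isomorphism.

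There is no real obstacle here, only some bookkeeping: one must justify that the hyperderivatives genuinely act term-by-term on the series $\sum \up{k}x_k$. The mild point is that the explicit formula from Proposition~2.13 is a priori stated on $A_\pfrak$ (or on $A_\pfrak \otimes \CI$ after $\CI$-linear extension), and one needs the continuity statement of Lemma~\ref{lem:hd-p-adic-continuous} (which ensures $\hd{\up{}}{n}{\pfrak^k(\CA)_\pfrak} \subseteq \pfrak^{k-n}(\CA)_\pfrak$) to pass to the completion $(\CA)_\pfrak$ and thereby justify term-wise differentiation of the $\up{}$-series.
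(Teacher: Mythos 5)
Your proof is correct and follows essentially the same route as the paper's: expand $h$ as a $\pfrak$-adically convergent series $\sum_k \up{k}x_k$ with $x_k\in\Fpp\otimes\CI$, apply the explicit term-wise formula for $\hde{\up{}}{n}$ (justified by triviality on $\Fpp$ and continuity from Lemma~\ref{lem:hd-p-adic-continuous}), and observe that evaluation at $\pfrak$ extracts exactly the coefficient $x_n$, so $\Dup$ identifies the $\up{}$-expansion with the $X$-power series. The only cosmetic difference is that the paper writes $x_k$ out in an explicit $\Fq$-basis of $\Fpp$.
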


\begin{proof}
Let $\alpha_1,\ldots, \alpha_\degp$ be an $\Fq$-basis of $\Fpp$. 
Then an arbitrary element in $h=(\CA)_\pfrak$ can uniquely be written as a ($\pfrak$-adically) convergent series
\[ h=\sum_{j=0}^\infty \sum_{i=1}^{\degp} \alpha_i\up{j}\otimes c_{ij} \]
with $c_{ij}\in \CI$. Then
\[ \left(\hd{\up{}}{n}{h}\right)(\pfrak)=\left(\sum_{j=n}^\infty \sum_{i=1}^{\degp} \binom{j}{n} \alpha_i\up{j-n}\otimes c_{ij}\right)(\pfrak) = \sum_{i=1}^{\degp} \alpha_i\otimes c_{in}. \]
Hence,
\[ \Dup\left(\sum_{j=0}^\infty \sum_{i=1}^{\degp} \alpha_i\up{j}\otimes c_{ij}\right)
= \sum_{n=0}^\infty \left(\sum_{i=1}^{\degp} \alpha_i\otimes c_{in}\right) X^n. \qedhere \]
\end{proof}

\begin{rem}\label{rem:up-expansion-by-Taylor-series}
We see from the computation in the previous proof that we obtain the  $\up{}$-expansion of an element $h\in (\CA)_\pfrak$ by
replacing $X$ by $\up{}$ in its Taylor series expansion at $\pfrak$, i.e. the $\up{}$-expansion of $h$ is given as
\[ h=\sum_{n\geq 0}\up{n}\cdot\hd{\up{}}{n}{h}(\pfrak). \]
\end{rem}

\subsection{Dualizable difference modules} \label{subsec:dualizable-difference-modules}

In this subsection, we briefly review some basics on difference rings and difference modules. As the usual definitions of difference modules are only given over fields or difference-pseudo fields (cf.~\cite{mvdp-mfs:gtde, am:dvnt, ao-mw:sgtlde}), we have to extend these definitions, and we introduce \emph{dualizable difference modules} which are the objects that are usually just called difference modules. The idea of considering these dualizable difference modules as a generalization was developed by M.~Wibmer and the author in an unpublished manuscript.

A ring $\cR$ with an endomorphism $\tau:\cR\to \cR$ is called a \emph{difference ring}.
A \emph{difference ring extension} $(\cS,\tau_\cS)$ of $(\cR,\tau)$ consists of a ring extension $\cS/\cR$ together with an endomorphism $\tau_\cS$ extending the endomorphims $\tau$. In such a case, we will usually omit the subscript $\cS$ and also write $\tau$ instead of $\tau_\cS$.

For a difference ring $(\cR,\tau)$, the set
\[  \cR^\tau := \{ x\in \cR \mid \tau(x)=x \}, \]
 is called the \emph{ring of invariants} of $\cR$. It is indeed a subring of $\cR$.

A \emph{difference module} over $(\cR,\tau)$ is an $\cR$-module $N$ together with a $\tau$-semilinear endomorphism $\tau_N:N\to N$, i.e.~a homomorphism of additive groups such that $\tau_N(xn)=\tau(x)\tau_N(n)$ for all $x\in \cR$ and $n\in N$. The set
\[  N^\tau:= \{ n\in N \mid \tau_N(n)=n \}\]
is called the \emph{module of invariants} of $N$. This is an $\cR^\tau$-module.

Given two difference modules $(N,\tau_N)$ and $(M,\tau_M)$, their tensor product $N\otimes_\cR M$ is a difference module with difference operator $\tau_{N\otimes M}=\tau_N\otimes \tau_M$.

Further, we denote by
\[ \Hom_\cR^\tau(N,M)=\{ f\in \Hom_\cR(N,M)\mid \tau_M\circ f=f\circ \tau_N \} \] 
the set of \emph{difference homomorphisms}, i.e.~the set of those $\cR$-module homomorphism that are compatible with the $\tau$-actions.

For a difference module $(N,\tau_N)$ over $(\cR,\tau)$, and a difference extension $\cS\supseteq \cR$, the tensor product $N_\cS:=\cS\otimes_\cR N$ is a difference module over $\cS$.


\begin{exmp}
Our base case will be the rings $\LA\subset \CA$ with the endomorphism being the Frobenius twist $\tau$ given by $\tau(a\otimes c)=a\otimes c^q$. This endomorphism will be extended to localizations $(\CA)_{(\pfrak)}$ and continuously to the completions $(\CA)_{\pfrak}$ and $\TT$. Here $\pfrak$ denotes an arbitrary  non-zero prime ideal of $A$, or the extension thereof.
As the ideal $\pfrak\CA\subseteq \CA$ is stable under $\tau$, we also get an induced endomorphism on
$\CA/\pfrak(\CA)\cong \Fpp\otimes \CI$.

The difference modules that we consider are derived from the $A$-motive of an Anderson $A$-module $E$, i.e.~from the module
\[ \mot:=\mot(E)=\Hom_{\vs}(E,\GG_{a,L}). \]
It carries an $A$-action induced by the $A$-action on $E$, and an action of $\End_{\vs}(\GG_{a,L})\cong L\{\tau\}$ where (by abuse of notation) $\tau$ denotes the $q$-power Frobenius endomorphism on $\GG_{a,L}$. As both actions commute with each other, $\mot$ can be seen as an $(\LA)$-module, and it is easy to verify that the $\tau$-action is a semilinear endomorphism on $\mot$ with respect to the $\tau$-action on $\LA$ given above.
\end{exmp}

Assume that the difference module $N$ is free of finite rank as $\cR$-module, and let $\{ b_1,\ldots, b_r\}$ be an $\cR$-basis of $N$. Then there is a matrix $\Phi\in \Mat_{r\times r}(\cR)$ such that
\[  \tau_N(b_i)=\sum_{j=1}^r \Phi_{ij} b_j\] for all $i=1,\ldots,r$, or in matrix notation
\[  \tau_N(\svect{b}{r}) = \Phi\cdot \svect{b}{r}. \]
Of course, the semilinear map $\tau_N$ is fully determined by this matrix.

\begin{lem}\label{lem:N-dualizable}
Let $(N,\tau_N)$ be a difference module over $(\cR,\tau)$ which is free of finite rank as $\cR$-module, and let $\{ b_1,\ldots, b_r\}$ be an $\cR$-basis of $N$. Then the following are equivalent
\begin{enumerate}
\item \label{item:ess-surjective} The image $\tau_N(N)$ contains a basis of $N$.
\item \label{item:Phi-invertible} The matrix $\Phi$ satisfying $\tau_N(b_i)=\sum_{j=1}^r \Phi_{ij} b_j$ for all $i=1,\ldots,r$, is invertible.
\item \label{item:difference-dual} There exists a unique $\tau$-difference structure on $\dual{N}:=\Hom_\cR(N,\cR)$ such that the
evaluation homomorphism $\ev:\dual{N}\otimes_\cR N\to \cR, f\otimes n\mapsto f(n)$ 
is a difference homomorphism.
\end{enumerate} 
\end{lem}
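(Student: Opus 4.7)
The plan is to fix the $\cR$-basis $\{b_1,\ldots,b_r\}$ throughout and to translate each of the three conditions into a concrete statement about the matrix $\Phi$; from there the equivalences reduce to elementary linear algebra over the commutative ring $\cR$.

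For $(1)\Leftrightarrow(2)$: since $\tau_N$ is $\tau$-semilinear, every element of $\tau_N(N)$ has the form $\tau_N(\sum_i a_i b_i)=\sum_i \tau(a_i)\tau_N(b_i)$, and hence the $\cR$-submodule of $N$ generated by $\tau_N(N)$ coincides with the $\cR$-span of $\{\tau_N(b_1),\ldots,\tau_N(b_r)\}$. Therefore $(1)$ is equivalent to $\{\tau_N(b_i)\}_{i=1}^r$ being an $\cR$-basis of $N$: if $\tau_N(N)$ contains a basis then the $r$ vectors $\tau_N(b_i)$ already span a free rank-$r$ module over the commutative ring $\cR$ and so are themselves a basis, while the converse is trivial. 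Via $\tau_N(b_i)=\sum_j \Phi_{ij}b_j$, this is in turn equivalent to $\Phi\in \GL_r(\cR)$, which is $(2)$.

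For $(2)\Leftrightarrow(3)$: a candidate map $\tau_{\dual{N}}:\dual{N}\to \dual{N}$ makes $\ev$ a difference homomorphism precisely when
\[ \tau_{\dual{N}}(f)(\tau_N(n))=\tau(f(n)) \qquad \text{for all } f\in \dual{N},\ n\in N. \]
Setting $n=b_i$ and writing $g:=\tau_{\dual{N}}(f)$, $x_j:=g(b_j)$, this becomes the linear system $\sum_{j}\Phi_{ij}x_j=\tau(f(b_i))$, $i=1,\ldots,r$, in the unknowns $x_j\in \cR$. Hence existence and uniqueness of $g$ for every $f$ is equivalent to invertibility of $\Phi$. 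If $\Phi\in \GL_r(\cR)$, I would define $\tau_{\dual{N}}(f)\in \dual{N}$ by the coordinates $(g(b_j))_j=\Phi^{-1}(\tau(f(b_i)))_i$; additivity and $\tau$-semilinearity then follow immediately, since $\Phi^{-1}$ has entries in $\cR$ and the map $f\mapsto (\tau(f(b_i)))_i$ is additive and $\tau$-semilinear. Conversely, existence of $\tau_{\dual{N}}(\dual{b_i})$ for each dual basis vector forces every standard basis vector of $\cR^r$ to lie in the image of $\Phi:\cR^r\to \cR^r$, and a surjective endomorphism of a finitely generated module over a commutative ring is automatically an isomorphism, yielding $(2)$.

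The only step that is not pure bookkeeping is the $\tau$-semilinearity check for the constructed $\tau_{\dual{N}}$ in $(2)\Rightarrow(3)$, but this is a one-line verification using $\tau(af(b_i))=\tau(a)\tau(f(b_i))$ together with the $\cR$-linearity of $\Phi^{-1}$; I do not anticipate any genuine obstacle.
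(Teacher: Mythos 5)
Your proof is correct and follows essentially the same route as the paper's: fix the basis, reduce all three conditions to statements about $\Phi$, and use the dual basis vectors to show that existence of the difference structure on $\dual{N}$ forces $\Phi$ to be invertible (the paper phrases this as the matrix identity $\Psi\Phi=\one_r$ for the matrix $\Psi$ describing $\tau_{\dual{N}}$ on the dual basis, which is the same linear system you solve). The only cosmetic difference is that you extract invertibility via ``a surjective endomorphism of a finitely generated module over a commutative ring is an isomorphism,'' whereas the paper uses that a one-sided matrix inverse over a commutative ring is two-sided; both are the same elementary fact in disguise.
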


\begin{proof}
We start by proving \eqref{item:ess-surjective}$\Leftrightarrow$\eqref{item:Phi-invertible}:\\
All elements in $\tau_N(N)$ are $\cR$-linear combinations of elements of $\tau_N(b_1),\ldots, \tau_N(b_r)$. Hence,
if $\tau_N(N)$ contains a basis of elements of $N$, then the images $\tau_N(b_1),\ldots, \tau_N(b_r)$ form a basis of $N$. Therefore, the matrix $\Phi$ is a base change matrix, i.e.~invertible. On the other hand, if $\Phi$ is invertible, then $\tau_N(b_1),\ldots, \tau_N(b_r)$ form a basis of $N$ which is contained in $\tau_N(N)$.

Next we show \eqref{item:Phi-invertible}$\Leftrightarrow$\eqref{item:difference-dual}:\\
Let $\{ \dual{b_1},\ldots, \dual{b_r}\}$ be the basis of $\dual{N}$ dual to $\{ b_1,\ldots, b_r\}$, i.e.~
satisfying $\dual{b_i}(b_j)=\delta_{ij}$  for all $i,j\in \{1,\ldots, r\}$, where $\delta_{ij}$ is the Kronecker delta given by $\delta_{ij}=1$ for $i=j$, and $\delta_{ij}=0$ for $i\ne j$.

Define a difference structure on $\dual{N}$ by setting 
\[ \tau_{\dual{N}}\left( \dual{b_1},\ldots,\dual{b_r}\right) = \left( \dual{b_1},\ldots,\dual{b_r}\right)\cdot \Psi \]
for some $\Psi\in \Mat_{r\times r}(\cR)$.
An easy calculation shows that
\[  \ev\left( \tau_{\dual{N}\otimes N}(\dual{b_i}\otimes b_j)\right)=\sum_{k=1}^r \Psi_{ik}\Phi_{kj}=(\Psi\Phi)_{ij}.\]
Therefore, the evaluation homomorphism $\ev$ is a difference homomorphism if and only if
\[ (\Psi\Phi)_{ij} = \tau\left( \ev(\dual{b_i}\otimes b_j)\right)=\tau(\delta_{ij})=\delta_{ij} \]
for all $i,j=1,\ldots, r$. This is equivalent to $\Psi\cdot \Phi=\one_r$.

So, if the given difference structure is such that $\ev$ is a difference homomorphism, then $\Psi$ is the inverse of $\Phi$, and in particular $\Phi\in \GL_r(\cR)$.
On the other hand, if $\Phi\in \GL_r(\cR)$, we can set $\Psi=\Phi^{-1}$ to provide the unique difference structure on $\dual{N}$ such that $\ev$ is a difference homomorphism.
\end{proof}

\begin{rem}
It is not hard to see that the difference structure on $\dual{N}$ in the lemma is given in such a way that its $\tau$-invariant elements consist of the $\tau$-equivariant homomorphisms, in formulas
\[ (\dual{N})^\tau = \Hom_\cR^\tau(N,\cR). \]
\end{rem}

Assume now that $(\cR,\tau)$ is a difference ring such that $\cR$ is a domain, and
let $\cF$ be the field of fractions of $\cR$. Then $\cF$ is naturally a difference ring extension of $\cR$.

Let $(N,\tau_N)$ be a difference module over $(\cR,\tau)$ which is locally free of finite rank as $\cR$-module.
Then the difference module $N_\cF:=\cF\otimes_\cR N$ is a free module over $\cF$, and we have an embedding
\[ \dual{N}=\Hom_\cR(N,\cR)\subseteq \Hom_\cR(N,\cF)=\Hom_\cF(N_\cF,\cF)=\dual{(N_\cF)}. \]

\begin{prop}\label{prop:N-dualizable}
Let $(\cR,\tau)$ be a difference ring such that $\cR$ is a domain, and let $\cF$ be the field of fractions of $\cR$.
Let $(N,\tau_N)$ be a difference module over $(\cR,\tau)$ which is locally free of finite rank as $\cR$-module.
If $\tau_N(N)$ generates $N$ as an $\cR$-module, then the difference structure on $\dual{(N_\cF)}$ given by Lemma \ref{lem:N-dualizable} restricts to a difference structure on $\dual{N}$.
\end{prop}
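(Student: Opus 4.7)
The plan is to verify directly that the dual difference operator on $\dual{(N_\cF)}$ furnished by Lemma~\ref{lem:N-dualizable} maps the submodule $\dual{N}$ into itself. First I would check that Lemma~\ref{lem:N-dualizable} actually applies to $N_\cF$: since $N$ is locally free of finite rank, $N_\cF$ is free of finite rank over $\cF$; and since $\tau_N(N)$ generates $N$ as $\cR$-module by hypothesis, the inclusion $\tau_N(N)\subseteq \tau_{N_\cF}(N_\cF)$ forces $\cF\cdot \tau_{N_\cF}(N_\cF)=N_\cF$, so $\tau_{N_\cF}(N_\cF)$ contains an $\cF$-basis of $N_\cF$. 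Hence condition \eqref{item:ess-surjective} of Lemma~\ref{lem:N-dualizable} holds for $N_\cF$, and the unique dual difference structure on $\dual{(N_\cF)}$ exists.

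Next I would unwind what the defining property of this dual structure says on elements. The compatibility $\ev\circ \tau_{\dual{(N_\cF)}\otimes N_\cF}=\tau\circ \ev$, combined with $\tau_{\dual{(N_\cF)}\otimes N_\cF}(\phi\otimes m)=\tau_{\dual{(N_\cF)}}(\phi)\otimes \tau_{N_\cF}(m)$, translates into the identity
\[ \bigl(\tau_{\dual{(N_\cF)}}(\phi)\bigr)\bigl(\tau_{N_\cF}(m)\bigr) \;=\; \tau\bigl(\phi(m)\bigr) \qquad \text{for all } \phi\in \dual{(N_\cF)},\ m\in N_\cF. \]

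Now I would fix an arbitrary $f\in \dual{N}=\Hom_\cR(N,\cR)$, set $\tilde f:=\tau_{\dual{(N_\cF)}}(f)\in \dual{(N_\cF)}$, and aim to show $\tilde f(N)\subseteq \cR$. By the generation hypothesis, any $n\in N$ can be written as a finite sum $n=\sum_i r_i\,\tau_N(m_i)$ with $r_i\in \cR$ and $m_i\in N$. Using $\cF$-linearity of $\tilde f$ together with the displayed identity, I obtain
\[ \tilde f(n) \;=\; \sum_i r_i\,\tilde f\bigl(\tau_N(m_i)\bigr) \;=\; \sum_i r_i\,\tau\bigl(f(m_i)\bigr), \]
which lies in $\cR$ since each $f(m_i)\in \cR$ and $\tau$ preserves $\cR$. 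Therefore $\tilde f\in \dual{N}$, so $\tau_{\dual{(N_\cF)}}$ restricts to $\dual{N}$, giving the desired difference structure.

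I don't anticipate any serious obstacle here; the argument is a direct unwinding once the setup of Lemma~\ref{lem:N-dualizable} is in place. The single point where the hypothesis \emph{``$\tau_N(N)$ generates $N$''} is used is the final step: without it, one could only guarantee integrality of $\tilde f$ on the $\cR$-submodule spanned by $\tau_N(N)$, and there would be no reason for $\tilde f$ to take values in $\cR$ on all of $N$. This is exactly why the hypothesis is the correct integral analog of condition \eqref{item:ess-surjective} in Lemma~\ref{lem:N-dualizable}.
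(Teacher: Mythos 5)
Your proof is correct and follows essentially the same route as the paper: both arguments write an arbitrary $n\in N$ as $\sum_i x_i\,\tau_N(n_i)$ using the generation hypothesis and then apply the defining identity $\tau_{\dual{(N_\cF)}}(f)(\tau_{N_\cF}(m))=\tau(f(m))$ from Lemma~\ref{lem:N-dualizable}\eqref{item:difference-dual} to conclude $\tau_{\dual{(N_\cF)}}(f)(n)\in\cR$. Your preliminary check that Lemma~\ref{lem:N-dualizable} applies to $N_\cF$ is a sensible addition that the paper leaves implicit.
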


\begin{proof}
The $\cR$-module $\dual{N}$ inside $\dual{(N_\cF)}$ is characterized by
\[ \dual{N}=\{ f\in \Hom_\cR(N,\cF) \mid f(n)\in \cR\, \forall n\in N\}. \]
So we have to show that for all $f\in \dual{N}$ and $n\in N$, $\tau_{\dual{(N_\cF)}}(f)(n)\in \cR$.

Since, $\tau_N(N)$ generates $N$ as an $\cR$-module, we can write any $n\in N$ as $n=\sum_i x_i\tau_N(n_i)$ with $x_i\in \cR , n_i\in N$. Then by the defining property of  $\tau_{\dual{(N_\cF)}}$,

\begin{eqnarray*}
\tau_{\dual{(N_\cF)}}(f)(n) &=& \tau_{\dual{(N_\cF)}}(f)\left( \sum_i x_i \cdot \tau_N(n_i)\right) 
= \sum_i x_i\cdot  \tau_{\dual{(N_\cF)}}(f)\left( \tau_N(n_i)\right) \\
&\overset{\text{Lem.~\ref{lem:N-dualizable}\eqref{item:difference-dual}}}{=}& \sum_i x_i\cdot \tau_\cR\left( f(n_i)\right) \in \cR.
\end{eqnarray*} 
\end{proof}

\begin{defn}\label{def:N-dualizable}
A difference module $(N,\tau_N)$ over $(\cR,\tau)$ that is free as $\cR$-module and satisfies the equivalent conditions of Lemma \ref{lem:N-dualizable}, or that is locally free over a domain $\cR$, and satisfies the hypothesis of Proposition \ref{prop:N-dualizable}, is called a \emph{dualizable} difference module.
\end{defn}

\begin{rem}
In the unpublished manuscript of Wibmer and the author, Lemma \ref{lem:N-dualizable} is generalized to any $\cR$-modules that are projective of finite rank by suitable modifications of conditions \eqref{item:ess-surjective} and \eqref{item:Phi-invertible}, including the case of Proposition \ref{prop:N-dualizable}.
It would then be more appropriate to call \emph{dualizable modules} all those projective difference modules satisfying these modified conditions resp.~condition \eqref{item:difference-dual}, as these are the ones admitting a categorial dual in the category of all difference modules. However, as our difference modules in question will meet one of the conditions above, we will not trouble the reader with the more general statement here.
\end{rem}

\begin{lem}\label{lem:generated-by-invariants}
Let $(N,\tau_N)$ be a dualizable difference module over $(\cR,\tau)$ that is free as $\cR$-module. Then the following are equivalent
\begin{enumerate}
\item \label{item:N-tau-generated} $N$ has a basis consisting of elements of $N^\tau$.
\item \label{item:N-tau-dual} The dual module $\dual{N}$ has a basis consisting of elements of $(\dual{N})^\tau$.
\item \label{item:N-tau-basis} $N^\tau$ is a free $\cR^\tau$-module of rank $\rk_\cR(N)$, and every $\cR^\tau$-basis $\{b_1,\ldots,b_r\}$ of $N^\tau$ is a $\cR$-basis of $N$.
\item \label{item:N-tau-isomorphism} The inclusion $N^\tau\subset N$ induces an isomorphism of $\cR$-modules 
\[ \alpha_N: \cR\otimes_{\cR^\tau} N^\tau \to N. \]
\end{enumerate}
\end{lem}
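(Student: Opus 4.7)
The strategy is to establish the cycle (1) $\Rightarrow$ (3) $\Rightarrow$ (4) $\Rightarrow$ (1), and to treat (1) $\Leftrightarrow$ (2) separately using the duality set up in Lemma \ref{lem:N-dualizable}.

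For (1) $\Rightarrow$ (3), I would take an $\cR$-basis $\{b_1,\ldots,b_r\}$ of $N$ with $b_i \in N^\tau$, and for any $n \in N^\tau$ expand $n = \sum_i x_i b_i$ with $x_i \in \cR$. Applying $\tau_N$ and using $\tau_N(b_i)=b_i$ gives $\sum_i \tau(x_i) b_i = \sum_i x_i b_i$, so uniqueness of basis expansion forces $x_i \in \cR^\tau$. Hence $\{b_1,\ldots,b_r\}$ spans $N^\tau$ over $\cR^\tau$, and $\cR^\tau$-linear independence is inherited from $\cR$-linear independence. Any other $\cR^\tau$-basis $\{b'_1,\ldots,b'_r\}$ of $N^\tau$ is related to the first by a matrix in $\GL_r(\cR^\tau) \subseteq \GL_r(\cR)$, so it is also an $\cR$-basis of $N$. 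The reverse implication (3) $\Rightarrow$ (1) is immediate.

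For (3) $\Rightarrow$ (4), a common $\cR^\tau$-/$\cR$-basis $\{b_i\}$ makes $\{1 \otimes b_i\}$ an $\cR$-basis of $\cR \otimes_{\cR^\tau} N^\tau$ which $\alpha_N$ sends to the $\cR$-basis $\{b_i\}$ of $N$, so $\alpha_N$ is an isomorphism. For (4) $\Rightarrow$ (1), I would exploit surjectivity of $\alpha_N$ to find an $\cR$-generating family of $N$ inside $N^\tau$, and then use the injectivity of $\alpha_N$ (which forbids surprise $\cR$-relations among $\tau$-invariants) together with the rank-$r$ freeness of $N$ to cut this family down to an $\cR$-basis of $N$ still contained in $N^\tau$. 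For (1) $\Leftrightarrow$ (2) I would use the explicit dual difference structure from the proof of Lemma \ref{lem:N-dualizable}: if $\{b_i\}$ is a $\tau$-invariant basis of $N$, then $\Phi = \one_r$, so its inverse $\Psi = \one_r$ as well, hence the dual basis $\{\dual{b_i}\}$ is automatically $\tau$-invariant. The converse follows by applying the same argument to $(\dual{N}, \tau_{\dual{N}})$ and identifying with the biduality pairing.

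The main obstacle is the step (4) $\Rightarrow$ (1): the existence of an $\cR$-module isomorphism $\cR \otimes_{\cR^\tau} N^\tau \cong N$ does not by itself produce an $\cR^\tau$-basis of $N^\tau$ compatible with an $\cR$-basis of $N$, without some flatness-type input on the extension $\cR^\tau \subseteq \cR$. Threading this carefully—combining the surjectivity of $\alpha_N$ (enough invariants exist to generate) with injectivity (there are no spurious relations) and the rigidity forced by $\tau$-semilinearity on a free module of known rank—is where the real bookkeeping of the lemma sits.
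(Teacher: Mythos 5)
Your route is the same as the paper's: the author likewise reduces everything to (1)$\Rightarrow$(3), proved by exactly your coefficient-wise computation ($\sum_j \tau(x_j)b_j=\sum_j x_jb_j$ forces $x_j\in\cR^\tau$, followed by the observation that a base change in $\GL_r(\cR^\tau)\subseteq\GL_r(\cR)$ sends one common basis to another), handles (1)$\Leftrightarrow$(2) via the dual difference structure of Lemma~\ref{lem:N-dualizable} together with biduality, and treats (3)$\Rightarrow$(4) and (4)$\Rightarrow$(1) as immediate. So for every step except (4)$\Rightarrow$(1) your proposal is correct and coincides with the printed proof.

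The gap is in your (4)$\Rightarrow$(1). The move ``use surjectivity of $\alpha_N$ to get an $\cR$-generating family of $N$ inside $N^\tau$, then cut it down to an $\cR$-basis'' is not available over a general ring: a generating set of a free module need not contain a basis (already $\{2,3\}$ generates the free $\ZZ$-module $\ZZ$ and contains no basis), and injectivity of $\alpha_N$ only excludes $\cR$-relations among families that are already $\cR^\tau$-linearly independent in $N^\tau$ --- it does not produce an $\cR^\tau$-generating set of $N^\tau$ of cardinality $r$, which is what you would need. What (4)$\Rightarrow$(1) really amounts to is descending freeness along $\cR^\tau\subseteq\cR$: from $\cR\otimes_{\cR^\tau}N^\tau$ being free of rank $r$ over $\cR$ one must deduce that $N^\tau$ itself is free of rank $r$ over $\cR^\tau$, i.e.\ essentially condition (3). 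You correctly located this as the delicate point, but the sketch does not close it; be aware that the paper's own proof does not close it either, simply asserting the implication to be trivial. The implication does become easy when $\cR^\tau$ is a field or a complete discrete valuation ring --- which covers the rings $\Fpp$ and $A_\pfrak$ over which the lemma is actually invoked --- since there finite generation descends by faithful flatness and a finitely generated torsion-free module is automatically free. A complete argument must either supply such a descent step or add a hypothesis on $\cR^\tau$; as written, the pruning step fails.
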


\begin{proof}
The equivalence of \eqref{item:N-tau-generated} and \eqref{item:N-tau-dual} is immediate from the description of the $\tau$-action on the dual module, and from recognizing that the bidual $\dual{(\dual{N})}$ is isomorphic to $N$ as difference modules.
The implications \eqref{item:N-tau-basis}$\Rightarrow$\eqref{item:N-tau-isomorphism} and \eqref{item:N-tau-isomorphism}$\Rightarrow$\eqref{item:N-tau-generated} are also trivial. So it remains to show the implication \eqref{item:N-tau-generated}$\Rightarrow$\eqref{item:N-tau-basis}.

So let $\{b_1,\ldots,b_r\}$ be an $\cR$-basis of $N$ consisting of $\tau$-invariant elements, and let $n\in N$. As $\{b_1,\ldots,b_r\}$ is a basis, there are unique $x_1,\ldots, x_r\in \cR$ such that $n=\sum_{j=1}^r x_jb_j$. Then
\begin{eqnarray*}
n \in N^\tau &\Leftrightarrow & \sum_{j=1}^r \tau(x_j)b_j = \tau_N(n)=n=\sum_{j=1}^r x_jb_j \\
&\Leftrightarrow & x_j\in \cR^\tau \quad \text{ for all }j=1,\ldots, r.
\end{eqnarray*}
Hence, $N^\tau = \bigoplus_{j=1}^r \cR^\tau b_j$ is a free $\cR^\tau$-module with basis $\{b_1,\ldots,b_r\}$.

If $\{b'_1,\ldots, b'_r\}$ is another $\cR^\tau$-basis of $N^\tau$, then there is a base change matrix $C\in \GL_r(\cR^\tau)$. In particular, $\{b'_1,\ldots, b'_r\}$ is also an $\cR$-basis of $N$.
\end{proof}

\section{Anderson $A$-modules over completions}

In this section, we consider elements which should be seen as $\TT$-points and as $(\CA)_\pfrak$-points of the Anderson module $E$.

\begin{defn}
After a choice of coordinate system $\kappa:E\cong \GG_{a,L}^d$, we have a norm on $E(\CI)$ induced by the maximum norm on $\CC_\infty^d$, and define
\[ E(\TT):= \left\{ \sum_{a\in B(A)} a\otimes e_a \bigmid e_a\in E(\CI),\, \lim_{\deg(a)\to \infty} \norm{e_a}=0 \right\} \]
as the completion of $A\otimes E(\CI)$ with respect to the Gauss norm. Although the norm depends on the coordinate system $\kappa$, the space itself doesn't (see also \cite[footnote on page 10]{qg-am:sfgtshrd}).
Further, for prime ideals $0\neq \pfrak\lhd A$, we set
\[ \Epfrak:= \varprojlim_{n} \left((A/\pfrak^{n+1})\otimes E(\CI)\right). \]
\end{defn}

\begin{rem}
\begin{enumerate}
\item On $A\otimes E(\CI)$, we have two different actions of $A$, namely one induced by the action via $\phi$ on the second tensor factor, and the other by multiplication on the first tensor factor. These induce two $A$-actions on the completions $E(\TT)$ as well as $\Epfrak$.

 By abuse of notation, we will
denote the action on the second tensor factor by $\phi$, too. Hence, 
\[\phi_a\left(\sum_i a_i\otimes e_{i}\right):=\sum_ia_i\otimes  \phi_a(e_i)\]
and
\[ a\cdot  \left(\sum_i a_i\otimes e_{i}\right) := \sum_i (aa_i)\otimes e_{i} \]
for $a\in A$ and $\sum_i a_i\otimes e_i\in A\otimes E(\CI)$.
On the space $\Epfrak$, the first action even extends to an action of $A_\pfrak$ turning $\Epfrak$ into an $A_\pfrak \otimes A$-module.

In the notation as power series $\sum_{n\geq 0} \up{n}f_n$ with $f_n\in \Fpp\otimes E(\CI)$, the action $\phi_a$ on $\Epfrak$ is still given via $\id_\Fpp \otimes \phi_a$ on the $f_n$. The $A_\pfrak$-action is given as
\[ \left(\sum_j c_j\up{j}\right) \cdot\left(\sum_i \up{i}f_i \right) = \sum_{n\geq 0}  \up{n} \left(\sum_{k=0}^nc_{n-k}\cdot  f_k \right) \]
for $\sum_i \up{i} f_i \in\Epfrak$ and $\sum_j c_j\up{j}\in \Fpp\ps{\up{}}$, where
$c_{n-k}\cdot f_k$ is just the product in the first tensor factor in $\Fpp \otimes E(\CI)$
\end{enumerate}

\end{rem}

\begin{defn}\label{def:H-hat-and-H}
For all $0\neq \pfrak\lhd A$ prime ideals, we define
\[ \hat{H}_\pfrak:=\hat{H}_{E,\pfrak}:= \{  h\in\Epfrak \mid \forall a\in A: \phi_a(h)=a\cdot h \}. \]
Further
\[  \fsf(E):=\{ h\in E(\TT) \mid \forall a\in A: \phi_a(h)=a\cdot h \}. \]
Hence, they are the subsets of $\Epfrak$ and of $E(\TT)$, respectively, on which the two $A$-actions coincide. The second space is called the space of special functions in \cite{qg-am:sfgtshrd} explaining the notation $\fsf(E)$.
\end{defn}

\section{Comparison Homomorphisms}\label{sec:homomorphisms}

The task of this section is to establish the following diagram.

\[  \xymatrix{ & & T_\pfrak(E)
 \ar@{<-}[d]^{\cong }_{\jp} &  \\  
& & \hat{H}_\pfrak \ar[r]^(.28){\ip}_(.28){\cong }
 & \Hom_{\LA}^\tau(\mot, (\CA)_\pfrak) \\
 \fd_{A/\Fq[u]}\cdot \Lambda_E \ar[rr]^(.6){\delta_u}_(.6){\cong } &&  \fsf(E) \ar[r]^(.4){\iota}_(.4){\cong } \ar@{^{(}->}[u]
 & \Hom_{\LA}^\tau(\mot, \TT)\ar@{^{(}->}[u]
}\] 
Here as before, $\pfrak$ is a prime ideal of $A$, and 
$\hat{H}_\pfrak$ and $\fsf(E)$ are the objects defined in Definition \ref{def:H-hat-and-H}.
Further, $\Lambda_E$ denotes the period lattice of $E$ and $\fd_{A/\Fq[u]}$ denotes the different ideal of the extension $A/\Fq[u]$ for some separating element $u\in A$. Finally,
$\mot:=\mot(E)=\Hom_{\vs}(E,\GG_{a,L})$ denotes the $A$-motive over $L$ associated to $E$, and the superscript $\tau$ at the Hom-sets stands for those homomorphisms which commute with the $\tau$-action (comp.~Section \ref{subsec:dualizable-difference-modules}).

The isomorphisms in the bottom row were already established in \cite{qg-am:sfgtshrd}, where the first isomorphism depended on the element $u$. In \cite{qg-am:rsfaacg}, it was shown that this isomorphism can be turned into a natural isomorphism $\Lambda_E\to \Omega_{A/\Fq}\otimes \fsf(E)$ whose inverse is given by taking residues.

Actually, the isomorphism $\jp$ will also depend on a uniformizer $\up{}\in A$ of the prime ideal $\pfrak$, but it stems from a natural isomorphism $\hat{\Omega}_{A_\pfrak/\Fpp}\otimes_{A_\pfrak}\hat{H}_\pfrak \xrightarrow{\tilde{\jp}} T_\pfrak(E)$ by composing with the isomorphism $A_\pfrak\to\hat{\Omega}_{A_\pfrak/\Fpp}, f\mapsto f d\up{ }$.

As we will see in Section \ref{sec:H-in-hat-H}, the injection $\fsf(E)\hookrightarrow \hat{H}_\pfrak$ will extend to an injective  $A_\pfrak$-homomorphism $A_\pfrak\otimes_A \fsf(E)\to \hat{H}_\pfrak$ which is an isomorphism, if $E$ is uniformizable.

\bigskip

We start by providing the inclusion $\fsf(E)\hookrightarrow \hat{H}_\pfrak$.

\begin{prop}\label{prop:H-in-H_pfrak} \ 
We have a natural inclusion of $A\otimes A$-modules $E(\TT)\hookrightarrow\Epfrak$.
In particular this induces an embedding $\fsf(E)\hookrightarrow \hat{H}_\pfrak$.
\end{prop}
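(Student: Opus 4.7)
The plan is to mirror the argument of Proposition \ref{prop:inclusion-of-tt-in-padic} at the level of $E$-points, reducing everything to the already established inclusion $\TT \hookrightarrow (\CA)_\pfrak$ via a choice of coordinate system, and then checking that the resulting map is intrinsic and $A\otimes A$-equivariant.

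More precisely, I would first fix a coordinate isomorphism $\kappa:E\isom \GG_{a,L}^d$ of $\Fq$-vector space schemes, which identifies $E(\CI)$ with $\CI^d$ as a normed $\Fq$-vector space (recalling that the norm on $E(\CI)$ is by definition the max-norm in any coordinate system). Under $\kappa$, the completions split into $d$ copies: $E(\TT)\cong\TT^d$ and $\Epfrak\cong((\CA)_\pfrak)^d$. Applying the injective ring homomorphism $\hat\pi:\TT\hookrightarrow(\CA)_\pfrak$ of Proposition~\ref{prop:inclusion-of-tt-in-padic} component-wise yields an injective $\Fq$-linear map $E(\TT)\hookrightarrow\Epfrak$ that extends the canonical inclusion $A\otimes E(\CI)\hookrightarrow A\otimes E(\CI)$.

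Next I would show that this map is independent of $\kappa$ and natural. The cleanest way is to observe that it is the unique continuous $\Fq$-linear map extending the identity on $A\otimes E(\CI)$: indeed, each projection $A\otimes E(\CI)\to(A/\pfrak^{n+1})\otimes E(\CI)$ is continuous for the Gauss norm (its target being finite dimensional over $\CI$, hence complete), so extends uniquely to $E(\TT)$, and these extensions are compatible with the inverse system defining $\Epfrak$. This characterisation makes the map manifestly independent of $\kappa$ and functorial in $E$.

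Then I would verify $A\otimes A$-equivariance. The multiplication action of $A$ on the first tensor factor is preserved by construction, since both sides are built as completions with respect to topologies compatible with this action. For the action via $\phi$ on the second tensor factor, one uses that for each $a\in A$ the endomorphism $\phi_a:E(\CI)\to E(\CI)$ is a polynomial map in any coordinate system, hence continuous; therefore $\id\otimes\phi_a$ extends continuously to both $E(\TT)$ and $\Epfrak$, and the two extensions are compatible with the inclusion by uniqueness. Finally, the induced embedding $\fsf(E)\hookrightarrow\hat H_\pfrak$ follows at once from Definition~\ref{def:H-hat-and-H}, since both spaces are cut out inside the respective ambient modules by the identical system of equations $\phi_a(h)=a\cdot h$ for $a\in A$. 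The only non-routine step is the independence of coordinates, but it is handled cleanly by the uniqueness of continuous extensions.
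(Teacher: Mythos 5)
Your proof follows the same route as the paper: choose a coordinate system $\kappa:E\isom\GG_{a,L}^d$, reduce componentwise to the inclusion $\TT\hookrightarrow(\CA)_\pfrak$ of Proposition~\ref{prop:inclusion-of-tt-in-padic}, and then check compatibility with the two $A$-actions. You are simply more explicit about the coordinate-independence (via uniqueness of continuous extensions) and the $\phi$-equivariance, both of which the paper dismisses as clear.
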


\begin{proof}
The inclusion $E(\TT)\hookrightarrow\Epfrak$ is proven in the same manner as the inclusion $\TT\to (\CA)_\pfrak$ in Proposition~\ref{prop:inclusion-of-tt-in-padic}, since after a choice of coordinate system, $E(\CI)$ is isomorphic to $\CC_\infty^d$ as normed $\Fq$-vector space. Compatibility with the $A\otimes A$-action is clear, and hence this inclusion restricts to an inclusion of $A$-modules $\fsf(E)\hookrightarrow \hat{H}_\pfrak$.
\end{proof}

The inclusion $\Hom_{\LA}^\tau(\mot, \TT)\hookrightarrow \Hom_{\LA}^\tau(\mot, (\CA)_\pfrak)$ is directly obtained
via the natural inclusion $\TT\to (\CA)_\pfrak$ in Proposition~\ref{prop:inclusion-of-tt-in-padic}, so we continue by giving
the isomorphism $\ip$, and its restriction to the isomorphism $\iota$.

\begin{thm}\label{thm:h-isom-M-tate-dual}
There is a natural isomorphism of $A_\pfrak$-modules
\[  \ip:\hat{H}_\pfrak \longrightarrow \Hom_{\LA}^\tau( \mot, (\CA)_\pfrak) \]
which restricts to an isomorphism of $A$-modules
\[ \iota: \fsf(E)\longrightarrow \Hom_{\LA}^\tau( \mot, \TT). \]
\end{thm}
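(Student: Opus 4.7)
My plan is to construct $\ip$ as an ``evaluation at $h$'' map and exhibit its inverse, using the fact that $\mot$ is free of rank $d$ as a left $L\{\tau\}$-module with basis given, after a choice of coordinates $E \cong \GG_{a,L}^d$, by the coordinate projections $\pi_1, \ldots, \pi_d \colon E \to \GG_{a,L}$.

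Under such coordinates, $E((\CA)_\pfrak) \cong (\CA)_\pfrak^d$, and the action of $\phi_a$ on $E((\CA)_\pfrak)$ unravels to $\phi_a(y) = \sum_j A_j(a)\, \tau^j(y)$, where $\phi_a = \sum_j A_j(a) \tau^j$ in $\Mat_{d \times d}(L\{\tau\})$ and $\tau$ denotes the Frobenius twist on $(\CA)_\pfrak$ (the identity on $A$, $q$-th power on $\CI$). This follows directly from the definition $\phi_a(\sum_i a_i \otimes e_i) = \sum_i a_i \otimes \phi_a(e_i)$ together with the fact that $\tau$ on $(\CA)_\pfrak$ restricts to the identity on $A$; this latter point is crucial, as it yields $\tau^j(a x) = a\, \tau^j(x)$ for all $a \in A$ and $x \in (\CA)_\pfrak$.

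Next, I would define $\ip$ by setting $\ip(h)(\pi_k) := h_k$ and extending $L\{\tau\}$-linearly; this automatically produces an $L$-linear, $\tau$-equivariant map $\mot \to (\CA)_\pfrak$, explicitly given by $\ip(h)(m) = \sum_j M_j\, \tau^j(h)$ for $m = \sum_j M_j \tau^j \in L\{\tau\}^{1 \times d}$. The remaining requirement is $A$-linearity. On generators, $a \cdot \pi_k = \pi_k \circ \phi_a = \sum_{j,\ell}\, (A_j(a))_{k\ell}\, \tau^j \pi_\ell$ in $\mot$; applying $\ip(h)$ to this identity gives $\sum_{j,\ell} (A_j(a))_{k\ell}\, \tau^j(h_\ell) = \phi_a(h)_k$, whereas $a \cdot \ip(h)(\pi_k) = a h_k$. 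These agree for all $a$ and $k$ precisely when $\phi_a(h) = a h$, i.e.\ exactly when $h \in \hat{H}_\pfrak$.

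For the inverse, given $f \in \Hom_{\LA}^\tau(\mot, (\CA)_\pfrak)$, I would set $h := (f(\pi_1), \ldots, f(\pi_d)) \in (\CA)_\pfrak^d = E((\CA)_\pfrak)$. The same computation, read in reverse, shows $\phi_a(h) = a h$, so $h \in \hat{H}_\pfrak$; these two constructions are then mutually inverse since any $L\{\tau\}$-linear map is determined by its values on $\pi_1, \ldots, \pi_d$. The $A_\pfrak$-linearity of $\ip$ follows from the same argument applied to arbitrary $c \in A_\pfrak \subset (\CA)_\pfrak$, using once more that the Frobenius twist $\tau$ fixes $A_\pfrak$. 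The restriction to $\iota \colon \fsf(E) \to \Hom_{\LA}^\tau(\mot, \TT)$ is then automatic: by Proposition \ref{prop:H-in-H_pfrak}, elements of $\fsf(E)$ correspond under the identification above to tuples in $E(\TT) \subset E((\CA)_\pfrak)$, which match homomorphisms landing in $\TT \subset (\CA)_\pfrak$.

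The main subtle point, which drives the whole argument, is that $\phi_a$ and any $m \in \mot$ are extended to $E((\CA)_\pfrak)$ using the Frobenius twist $\tau$ (fixing the $A$-factor), rather than via the absolute $q$-th power in the ring $(\CA)_\pfrak$. Only with this convention does $\tau$ commute with multiplication by $a \in A$, and this is precisely what makes $\ip(h)$ an $\LA$-linear map.
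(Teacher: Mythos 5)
Your proposal is correct and is essentially the paper's own argument: both rest on the duality between $E$-points and $L$-linear, $\tau$-equivariant homomorphisms out of $\mot$ (the paper phrases it via the bidual $E(\CI)^{\vee\vee}$ and an inverse limit over $(A/\pfrak^{n+1})\otimes E(\CI)$, you phrase it via the explicit $L\{\tau\}$-basis $\pi_1,\ldots,\pi_d$), followed by identifying $\hat{H}_\pfrak$ as exactly the $h$ for which the resulting map is $\LA$-linear rather than merely $L$-linear. Your key observation that $\tau$ fixes the $A$-tensor-factor, so that $\phi_a$ and multiplication by $a$ interact correctly, is precisely the mechanism underlying the paper's compatibility-of-$A$-actions step.
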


\begin{proof} (Similar to $t$-adic case in \cite[Theorem~3.9]{am:ptmsv})\\
The natural homomorphism of $\Fq$-vector spaces
\begin{equation}\label{eq:iso-E-to-M-dual}
     E(\CI) \longrightarrow \Hom_{L}^{\tau}(\mot, \CI), e\mapsto
\left\{ \mu_e: m\mapsto m(e) \right\}
\end{equation}
is an isomorphism, since after a choice of coordinate system $E(\CI)\cong \CC_\infty^d$ the latter is isomorphic to the bidual vector space 
$$E(\CI)^{\vee\vee}
=\Hom_{\CI}(\Hom_{\CI}(E(\CI),\CI),\CI)$$ of $E(\CI)$.
The homomorphism \eqref{eq:iso-E-to-M-dual} is even compatible with the $A$-action on $E$ via $\phi$ and the $A$-action on 
$\mu\in \Hom_{L}^{\tau}(\mot, \CI)$ via $(a\cdot \mu)(m)=
\mu(m\circ \phi_a)$ for all $m\in \mot$, $a\in A$.

The isomorphism of $A$-modules \eqref{eq:iso-E-to-M-dual} induces
isomorphisms of $A_\pfrak\otimes A$-modules
\[
(A/\pfrak^{n+1})\otimes  E(\CI)\longrightarrow  (A/\pfrak^{n+1})\otimes  \Hom_{L}^{\tau}(\mot, \CI)  = \Hom_{L}^{\tau}\left(\mot,  (A/\pfrak^{n+1})\otimes \CI\right)\]
by linear extension. Passing to the inverse limit and recognizing that $\Hom$ commutes with inverse limits, one obtains an isomorphism
\begin{eqnarray*}
\Epfrak &\longrightarrow & \varprojlim_n \Hom_{L}^{\tau}\left(\mot, (A/\pfrak^{n+1})\otimes  \CI\right)= \Hom_{L}^{\tau}(\mot, (\CA)_\pfrak).
\end{eqnarray*}
 By compatibility with the $A$-actions, the image of $\hat{H}_\pfrak\subseteq \Epfrak$ are exactly those
homomorphisms $\mu:\mot\to  (\CA)_\pfrak$ for which $\mu(m\circ \phi_a)=\mu(m)\cdot a$ for all $m\in \mot$ and all $a\in A$, i.e.~the $\LA$-linear ones, inducing the isomorphism
\[  \ip:\hat{H}_\pfrak \longrightarrow \Hom_{\LA}^\tau( \mot, (\CA)_\pfrak). \]
If we choose, an $\Fq$-basis $B(A)$ of $A$ as in Remark \ref{rem:TT-in-padic}, then we see that
$\sum_{a\in B(A)} a\otimes e_a\in\Epfrak$ is actually in $E(\TT)$ if and only if
 $\lim\limits_{a\to \infty} \norm{e_a}=0$. This is the case if and only if for all $m\in \mot$,
$\lim\limits_{a\to \infty} \betr{m(e_a)}=0$. Hence, the isomorphism
$\Epfrak\to \Hom_{L}^{\tau}(\mot, (\CA)_\pfrak)$ restricts to an isomorphism
$E(\TT) \to \Hom_{L}^\tau( \mot, \TT).$
As $\fsf(E)=E(\TT)\cap \hat{H}_\pfrak$ and $\Hom_{\LA}^\tau( \mot, \TT)=\Hom_{L}^\tau( \mot, \TT)\cap
\Hom_{\LA}^\tau( \mot, (\CA)_\pfrak)$, we get the desired isomorphism
\[ \iota: \fsf(E)\longrightarrow \Hom_{\LA}^\tau( \mot, \TT). \qedhere \]
\end{proof}

\begin{rem}
The isomorphism $\Epfrak\to  \Hom_{L}^{\tau}(\mot, (\CA)_\pfrak)$ is given in the $\up{}$-expansion as:
 \[
 \sum_i \up{i}e_i  \mapsto  \qquad \left\{
 \sum_i \up{i}\mu_{e_i}   : m\mapsto \sum_i \up{i}(\id_{\Fpp}\otimes m)(e_i) \right\}.
\]
\end{rem}

We end this section by providing the isomorphism $\jp: \hat{H}_\pfrak \to T_\pfrak(E)$.

\begin{thm}\label{thm:isom-jp} \
\begin{enumerate}
\item \label{item:general-isoms} We have isomorphisms of $A_\pfrak\otimes A$-modules
\[  \hat{\Omega}_{A_\pfrak/\Fpp}\otimes_{A_\pfrak} \Epfrak \to \Hom_{\Fpp}\left(K_\pfrak/A_\pfrak, \Fpp\otimes E(\CI)\right)\to \Hom_{\Fq}\left(K_\pfrak/A_\pfrak, E(\CI)\right), \]
where the first map is given by 
\[ \omega \otimes \left( \sum_{a\in B(A)} a\otimes e_a\right)\quad\longmapsto\quad \left\{ x\mapsto \sum_{a\in B(A)} \res_\pfrak(xa\omega)\otimes e_a \right\},\]
for an $\Fq$-basis $B(A)$ of $A$ as in Remark \ref{rem:TT-in-padic},  
and the second map is given by
\[ f\mapsto (\tr \otimes \id)\circ f\] where $\tr:\Fpp\to \Fq$ denotes the trace map.
\item \label{item:tilde-jp} These isomorphisms restrict to isomorphisms
\[ \tilde{\jp}: \hat{\Omega}_{A_\pfrak/\Fpp}\otimes_{A_\pfrak} \hat{H}_\pfrak \to \Hom_{\Fpp\otimes A}\left(K_\pfrak/A_\pfrak, \Fpp\otimes E(\CI)\right)\to \Hom_{A}\left(K_\pfrak/A_\pfrak, E(\CI)\right) = T_\pfrak(E). \]
\item \label{item:jp} After the choice of a uniformizer $\up{}$ at $\pfrak$, we obtain an isomorphism of $A_\pfrak$-modules
\[ \jp: \hat{H}_\pfrak \to T_\pfrak(E), h\mapsto \tilde{\jp}(d\up{}\otimes h).\]
\end{enumerate}
\end{thm}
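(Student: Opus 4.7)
The plan is to establish each of the three parts in turn, with part (1) doing the bulk of the work, and then to derive parts (2) and (3) from it.

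For part (1), the first map relies on the classical local residue pairing, which identifies $\hat{\Omega}_{A_\pfrak/\Fpp}$ with the continuous $\Fpp$-dual of $K_\pfrak/A_\pfrak$; namely, for fixed $\omega = g \, d\up{}$ and $a \in A \subset A_\pfrak$, the residue $\res_\pfrak(xa\omega) \in \Fpp$ depends only on $x \bmod A_\pfrak$, and as a function of $x$ realizes all $\Fpp$-linear functionals. To prove bijectivity concretely, the cleanest route is to pick the basis $B(A)$ as in Remark \ref{rem:TT-in-padic} and to use the $\up{}$-expansion $h = \sum_{n \geq 0} \up{n} f_n$ with $f_n \in \Fpp \otimes E(\CI)$; a direct residue calculation then shows that, under the first map with $\omega = d\up{}$, the element $h$ is sent to the $\Fpp$-linear homomorphism $\beta \up{-n-1} \mapsto \beta f_n$. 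Both sides are thereby canonically identified with $\prod_{n \geq 0} \Fpp \otimes E(\CI)$, and the sums defining the map have only finitely many nonzero terms because, for the chosen $B(A)$, there are only finitely many basis elements of each fixed $\pfrak$-valuation. The $A_\pfrak$-linearity and $A$-linearity are then immediate from the corresponding linearities of the residue map and of the actions on $\Epfrak$.

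For the second map, $(\tr \otimes \id) \circ -$, the point is that $\tr: \Fpp \to \Fq$ is a generator of the one-dimensional $\Fpp$-module $\Hom_{\Fq}(\Fpp, \Fq)$, which via Hom-tensor adjunction yields an isomorphism
\[ \Hom_{\Fpp}\bigl(K_\pfrak/A_\pfrak,\, \Fpp \otimes E(\CI)\bigr) \xrightarrow{\cong} \Hom_{\Fq}\bigl(K_\pfrak/A_\pfrak,\, E(\CI)\bigr). \]
This adjunction is compatible with any further module structure on $E(\CI)$.

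For part (2), the decisive observation is that, under the first map, the $A$-action by multiplication on $\Epfrak$ transports to precomposition by multiplication on $K_\pfrak/A_\pfrak$, while the action by $\phi$ transports to postcomposition by $\phi$. Hence the defining condition $\phi_a(h) = a \cdot h$ of $\hat{H}_\pfrak$ translates exactly into the $A$-equivariance $g(ax) = \phi_a(g(x))$ of the associated homomorphism $g$, and combined with the already-established $\Fpp$-linearity this identifies the image of $\hat{H}_\pfrak$ with $\Hom_{\Fpp \otimes A}(K_\pfrak/A_\pfrak,\, \Fpp \otimes E(\CI))$. The trace-dual isomorphism then restricts to $\Hom_A(K_\pfrak/A_\pfrak, E(\CI)) = T_\pfrak(E)$ because $\tr \otimes \id$ is $A$-equivariant for the $\phi$-action on $E(\CI)$. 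Part (3) is automatic: the choice of $\up{}$ supplies the $A_\pfrak$-isomorphism $A_\pfrak \to \hat{\Omega}_{A_\pfrak/\Fpp}$, $f \mapsto f\, d\up{}$, and composing with $\tilde{\jp}$ yields $\jp$. The hard part will be the careful bookkeeping of the four commuting actions ($A_\pfrak$, $\Fpp$, $A$ via multiplication, and $A$ via $\phi$) on each side and the verification that the residue map really produces an isomorphism between the completion $\Epfrak$ and the discrete Hom-space rather than a mere injection; the explicit formula via $\up{}$-expansions is what makes this transparent.
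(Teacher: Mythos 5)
Your proposal is correct and follows essentially the same route as the paper: the paper also verifies bijectivity of the first map via the $\up{}$-expansion (its explicit inverse is $f \mapsto d\up{}\otimes \sum_{j\geq 0}\up{j}f(\up{-j-1})$, which is exactly your identification of both sides with $\prod_{n\geq 0}\Fpp\otimes E(\CI)$), handles the trace map by exhibiting the dual-basis inverse $g\mapsto \{x\mapsto \sum_i \alpha_i\otimes g(\check{\alpha}_i x)\}$ (the same fact as your Hom-tensor adjunction), and obtains (2) by restricting to the locus where the two $A$-actions coincide. No gaps.
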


\begin{proof}
\eqref{item:general-isoms} It is not hard to check that both maps are well-defined and $A_\pfrak\otimes A$-linear.
An inverse to the map $f\mapsto (\tr\otimes \id)\circ f$ is given by
\[ g\longmapsto \quad \left\{ x \mapsto \sum_{i=1}^\degp \alpha_i\otimes g(\check{\alpha}_i x) \right\},\]
where $\alpha_1,\ldots, \alpha_\degp$ is an $\Fq$-basis of $\Fpp$, and $\check{\alpha}_1,\ldots, \check{\alpha}_\degp$ is the dual basis of $\Fpp$ with respect to $\tr$, i.e.~satisfying $\tr(\check{\alpha}_i\alpha_j)=1$ if $i=j$, and $\tr(\check{\alpha}_i\alpha_j)=0$ if $i\ne j$. (Take into account that this choice implies $\sum_{i=1}^\degp \tr(\alpha_i)\check{\alpha}_i=1=\sum_{i=1}^\degp \tr(\check{\alpha}_i)\alpha_i$.)

An inverse to the first map is given by
\[ f \mapsto d\up{}\otimes \left( \sum_{j=0}^\infty \up{j}\cdot f(\up{-j-1})\right), \]
where as before $\up{}\in A$ is a uniformizer at $\pfrak$.

Part \eqref{item:tilde-jp} is obtained by taking in each module the subset of elements on which both $A$-actions coincide, and part \eqref{item:jp} is clear from that.
\end{proof}

\section{Relation of $\fsf(E)$ in $\hat{H}_\pfrak$}\label{sec:H-in-hat-H}

In this section, we provide results on the structure of $\fsf(E)$, and of the image of the inclusion $\fsf(E)\to \hat{H}_\pfrak$.
This is a generalization of \cite[Section 3.1]{am:ptmsv}

We also deduce that uniformizable Anderson modules (even if they are not abelian) are regular in the sense of Yu.

\begin{defn}
For a prime ideal $\pfrak$, we define the $\pfrak$-rank of $E$, $\trk{\pfrak}(E)$ to be the dimension of the $\pfrak$-torsion 
$E[\pfrak]$ as a vector space over $\FF_\pfrak:=A/\pfrak$.
\end{defn}

\begin{rem}\label{rem:p-rk-rk-Hp}
Since, $\hat{H}_\pfrak/\pfrak\hat{H}_\pfrak \isom E[\pfrak]$, we have
\[ \trk{\pfrak}(E) = \rk_{A_\pfrak} \hat{H}_\pfrak. \]
\end{rem}

\begin{lem}\label{lem:H-locally-free}
For any Anderson module $E$ (even non-abelian, non-uniformizable), the space of special functions $\fsf(E)$ and the period lattice $\Lambda_E$ are finitely generated torsion-free $A$-modules, hence even locally free.
\end{lem}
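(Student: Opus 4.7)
The plan is to dispose of $\Lambda_E$ first and then transfer all conclusions to $\fsf(E)$ through the isomorphism
\[
\delta_u\colon \fd_{A/\Fq[u]}\otimes_A \Lambda_E \xrightarrow{\cong} \fsf(E)
\]
recalled in Section~\ref{sec:homomorphisms} from \cite{qg-am:sfgtshrd}, for some chosen separating element $u\in A$. Because $A$ is a Dedekind domain (it is the affine coordinate ring of the smooth curve $C\setminus\{\infty\}$), local freeness will be automatic once finite generation and torsion-freeness are both established, so the proof really has only two substantial steps.

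For torsion-freeness of $\Lambda_E$, I would argue directly from the generic characteristic assumption. If $a\in A\setminus\{0\}$ and $\lambda\in\Lambda_E$ satisfy $d\phi_a(\lambda)=0$, then $d\phi_a - \ell(a)\cdot\id_{\Lie E}$ is nilpotent by the Anderson-module hypothesis and $\ell(a)\neq 0$ by genericity, so $d\phi_a\in\End_{\CI}(\Lie E)$ is invertible, forcing $\lambda=0$.

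For finite generation of $\Lambda_E$, I would restrict the $A$-action along $\Fq[u]\hookrightarrow A$, turning $E$ into an Anderson $\Fq[u]$-module of the same dimension~$d$. The Lie algebra and the period lattice are unchanged as sets, and now $\Lambda_E$ is viewed as a discrete sub-$\Fq[u]$-module of the finite-dimensional $\CI$-vector space $\Lie E$. A classical argument, essentially due to Anderson and recorded in the context of \cite[Lemma~5.4]{uh-akj:pthshcff}, then identifies $\Lambda_E$ as a free $\Fq[u]$-module of finite rank. Since $A$ is itself finitely generated as an $\Fq[u]$-module, any $\Fq[u]$-generating set of $\Lambda_E$ is automatically an $A$-generating set, yielding finite generation over~$A$.

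Transferring to $\fsf(E)$ is then formal: $\fd_{A/\Fq[u]}$ is a non-zero fractional ideal of the Dedekind ring $A$, hence finitely generated and flat, so $\fd_{A/\Fq[u]}\otimes_A\Lambda_E$ is again finitely generated and torsion-free, and via $\delta_u$ so is $\fsf(E)$. Over a Dedekind domain, every finitely generated torsion-free module is projective, i.e.~locally free, which settles both claims. The main obstacle is the finite-generation step for $\Lambda_E$: in the abelian case one would simply bound $\rk_{\Fq[u]}(\Lambda_E)\leq \rk(\mot)$ using the motive, but for non-abelian $E$ the motive need not be finitely generated and that shortcut is unavailable. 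One has to fall back on the analytic fact that a discrete sub-$\Fq[u]$-module of a finite-dimensional $\CI$-vector space is automatically free of finite rank, which is Anderson's original reasoning and is insensitive to whether $E$ is abelian or uniformizable.
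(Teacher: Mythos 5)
Your proposal is correct and follows essentially the same route as the paper: finite generation is reduced to $\Lambda_E$ via the isomorphism $\delta_u$, and finite generation of $\Lambda_E$ is obtained by restricting scalars to a polynomial subring $\Fq[u]\subseteq A$ and invoking the classical discreteness argument (the paper cites its own \cite[Proposition 2.2(2)]{am:ptmsv} for exactly this). The only cosmetic difference is that the paper establishes torsion-freeness directly on $\fsf(E)$ from the left $A$-action on $E(\TT)$, whereas you prove it on $\Lambda_E$ via invertibility of $d\phi_a$ and transfer it through $\delta_u$; both arguments are valid.
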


\begin{proof}
Since the left $A$-action on $E(\TT)$ is obviously torsion-free, the $A$-action on $\fsf(E)$ (which is the restricted left $A$-action) is also torsion-free, and hence $\fsf(E)$ is a torsion-free $A$-module.

As $\fsf(E)$ is isomorphic to the period lattice $\Lambda_E$ up to multiplication with the ideal $\fd_{A/\Fq[u]}$, it suffices to show finite generation for $\Lambda_E$.

For $t\in A\setminus \Fq$, we can consider $E$ as a $t$-module via $\phi|_{\Fq[t]}$ having the same exponential function, and hence the same period lattice.
By \cite[Proposition 2.2(2)]{am:ptmsv}, $\Lambda_E$ is a free $\Fq[t]$-module of finite rank.
Hence, it is also finitely generated over $A$.
\end{proof}

\begin{lem}\label{lem:H-saturated}
One has
\[ \fsf(E)\cap \pfrak\hat{H}_\pfrak = \pfrak\,  \fsf(E). \]
\end{lem}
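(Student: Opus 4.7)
The inclusion $\pfrak\,\fsf(E)\subseteq\fsf(E)\cap\pfrak\hat{H}_\pfrak$ is immediate from the $A$-linearity of $\fsf(E)\hookrightarrow\hat{H}_\pfrak$, so the work lies in the reverse containment. My plan is to transport the statement through the isomorphisms $\iota$ and $\ip$ of Theorem~\ref{thm:h-isom-M-tate-dual}. Since $\hat{H}_\pfrak$ is an $A_\pfrak$-module and $\pfrak A_\pfrak=(\up{})$, one has $\pfrak\hat{H}_\pfrak=\up{}\hat{H}_\pfrak$; under $\ip$ this corresponds to $\Hom_{\LA}^\tau(\mot,\pfrak(\CA)_\pfrak)$, using that $\up{}$ is a non-zero-divisor on $(\CA)_\pfrak$ so that one can divide by $\up{}$ in the target. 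Consequently $\fsf(E)\cap\pfrak\hat{H}_\pfrak$ corresponds to $\Hom_{\LA}^\tau(\mot,\TT\cap\pfrak(\CA)_\pfrak)$ and $\pfrak\,\fsf(E)$ corresponds to $\pfrak\cdot\Hom_{\LA}^\tau(\mot,\TT)$, so the lemma reduces to two algebraic identities.

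The first identity is $\TT\cap\pfrak(\CA)_\pfrak=\pfrak\TT$ inside $(\CA)_\pfrak$. One inclusion is clear, and for the reverse I will show that both sides equal the kernel of the continuous ring surjection $\TT\to\Fpp\otimes\CI$ obtained by composing $\TT\hookrightarrow(\CA)_\pfrak$ with the evaluation at $\pfrak$. Since $\pfrak$ is finitely generated in the Noetherian ring $A$, the ideal $\pfrak\TT$ is finitely generated in the affinoid $\CI$-algebra $\TT$ and hence closed, so $\TT/\pfrak\TT$ is a Hausdorff $\CI$-Banach algebra. The image of $\CA/\pfrak\CA$ in $\TT/\pfrak\TT$ is dense (as $\CA$ is dense in $\TT$); since $\CA/\pfrak\CA\cong\Fpp\otimes\CI$ is finite-dimensional over $\CI$ and therefore already complete, this dense image must coincide with $\TT/\pfrak\TT$, yielding $\TT/\pfrak\TT\cong\Fpp\otimes\CI$ and the desired identification of $\pfrak\TT$ with the kernel.

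The second identity is $\Hom_{\LA}^\tau(\mot,\pfrak\TT)=\pfrak\cdot\Hom_{\LA}^\tau(\mot,\TT)$. Since $A$ is Dedekind, $\pfrak$ is a finitely generated projective $A$-module of rank one; for any $A$-flat $\LA$-module $N$ the natural map
\[
\pfrak\otimes_A\Hom_{\LA}(\mot,N)\longrightarrow\Hom_{\LA}(\mot,\pfrak\otimes_A N)
\]
is then an isomorphism (a standard fact reducing to the free case by a direct-summand argument, valid for arbitrary $\mot$). Passing to $\tau$-invariants is compatible because tensoring with the flat module $\pfrak$ commutes with the kernel of $\id-\tau$. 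Applying this with $N=\TT$, which is $A$-flat as a torsion-free module over the Dedekind domain $A$, and combining with the flatness identification $\pfrak\otimes_A\TT\cong\pfrak\TT$, yields the second identity. Putting the two pieces together gives $\fsf(E)\cap\pfrak\hat{H}_\pfrak\cong\Hom_{\LA}^\tau(\mot,\pfrak\TT)\cong\pfrak\,\fsf(E)$, as required.

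The main obstacle I foresee is the closedness of $\pfrak\TT$ in $\TT$ when $\pfrak$ is non-principal in $A$: a uniformizer $\up{}\in A$ of $\pfrak A_\pfrak$ generates only the proper ideal $\up{}A\subsetneq\pfrak$ in $A$, so direct Gauss-norm manipulations using the single element $\up{}$ do not suffice, and one genuinely must exploit the finite generation of $\pfrak$ via the closedness of finitely generated ideals in the affinoid $\CI$-algebra $\TT$.
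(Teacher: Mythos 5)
Your proposal is correct, but it proves the lemma by an entirely different mechanism than the paper. The paper works directly inside $E(\TT)\subseteq\Epfrak$: it first proves $\fsf(E)\cap\pfrak^n\hat{H}_\pfrak=\pfrak^n\fsf(E)$ for an $n$ with $\pfrak^n=(f)$ principal (finiteness of the class group), by choosing an $\Fq$-basis $B(A)=\{b_if^k\}$ adapted to $f$ and explicitly ``dividing by $f$'' a Gauss-norm expansion, checking that the quotient still lies in $\fsf(E)$ because the left $A$-action on $E(\TT)$ is torsion-free; it then descends from $\pfrak^n$ to $\pfrak$ by an elementary ideal computation with $\up{}\in\pfrak\setminus\pfrak^2$ and $\up{n}=cf$, $c\notin\pfrak$. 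You instead push everything through $\iota$ and $\ip$ (both available, since Theorem~\ref{thm:h-isom-M-tate-dual} precedes this lemma, so there is no circularity) and reduce to two clean algebraic facts: the saturation $\TT\cap\pfrak(\CA)_\pfrak=\pfrak\TT$, and the projection formula $\Hom_{\LA}^\tau(\mot,\pfrak\otimes_A\TT)\cong\pfrak\otimes_A\Hom_{\LA}^\tau(\mot,\TT)$ for the rank-one projective $A$-module $\pfrak$ (correctly noted to need no finiteness of $\mot$, and compatible with $\tau$-invariants since $\pfrak$ is $A$-flat and $\Hom^\tau$ is the kernel of an $A$-linear map). Both steps check out; your identification of non-principality of $\pfrak$ as the real obstacle matches exactly what drives the paper's detour through $\pfrak^n=(f)$. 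What your route buys is a structural statement of independent interest, $\TT/\pfrak\TT\cong\Fpp\otimes\CI$, which refines Lemma~\ref{lem:invertible-elements}; what it costs is the input that every ideal of $\TT$ is closed, i.e.\ that $\TT$ is an affinoid $\CI$-algebra. That fact is true but nowhere established in the paper (which only asserts elsewhere that $\TT$ is a Dedekind domain), so you should spell it out: choosing $u\in A$ with $A$ finite free over $\Fq[u]$ and the $\Fq$-basis $\{u^je_i\}$ exhibits $\TT\cong\bigoplus_i\CI\cs{u}\,e_i$ as a finite Banach module-algebra over the Tate algebra $\CI\cs{u}$, hence affinoid. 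Also, your phrase ``$\CA/\pfrak\CA$ is finite-dimensional and therefore already complete'' is slightly off as stated: what you actually use is that its \emph{image} in the Hausdorff quotient $\TT/\pfrak\TT$ is finite-dimensional over $\CI$, hence closed, hence (being dense) everything. With these two clarifications the argument is a valid, more conceptual alternative to the paper's hands-on proof.
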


\begin{proof}
We first show $\fsf(E)\cap \pfrak^n\hat{H}_\pfrak = \pfrak^n \fsf(E)$ where $n\in \NN$ is such that $\pfrak^n\lhd A$ is principal. Actually, in both equations the inclusions $\supseteq$ are clear and we only show the reverse inclusions.

Let $f\in \pfrak^n$ be a generator, and choose lifts $\{b_1,\ldots, b_m\}\subset A$ of an $\Fq$-basis of $A/(f)$. Then
\[B(A):=\{ b_if^k\mid k\geq 0, i=1,\ldots, m\}\] is an $\Fq$-basis of $A$.

Let $h=\sum_{a\in B(A)} a \otimes h_a\in \fsf(E)\cap \pfrak^n\hat{H}_\pfrak =\fsf(E)\cap f\hat{H}_\pfrak$.
We claim that
\[  \tilde{h}:= \sum_{a\in B(A)} a\otimes h_{fa}\in E(\TT) \]
is indeed in $\fsf(E)$ and satisfies $f\tilde{h}=h$. This shows that $h\in f \fsf(E)=\pfrak^n \fsf(E)$.

One has $h_{b_i}=0$ for all $i\in 1,\ldots, m$, since $h\in f\hat{H}_\pfrak$, and therefore
\[ h= \sum_{a\in B(A)\setminus \{b_1,\ldots, b_m\}} a \otimes h_a
=\sum_{a\in B(A)} (fa)\otimes h_{fa} =f\tilde{h}. \]

Furthermore, for all $x\in A$:
\[ f\cdot \phi_x(\tilde{h})=f\cdot \sum_{a\in B(A)} a\otimes \phi_x(h_{fa})
= \sum_{a\in B(A)} fa\otimes \phi_x(h_{fa})=\phi_x(h)=xh=fx\tilde{h},\]
hence $f(\phi_x-x)\tilde{h}=0$. As the left-$A$-action on $E(\TT)$ is torsion-free, this implies 
$(\phi_x-x)\tilde{h}=0$, i.e.~$\tilde{h}\in \fsf(E)$.

\medskip

Now let $h\in \fsf(E)\cap \pfrak \hat{H}_\pfrak$, and let $\up{}\in \pfrak\setminus \pfrak^2$.
Then by the first part, $\up{n-1}h\in \fsf(E)\cap \pfrak^n \hat{H}_\pfrak=f \fsf(E)$. Therefore, there exists $\tilde{h}\in \fsf(E)$ such that $\up{n-1}h=f\tilde{h}$.\\
As $\up{n}\in \pfrak^n\setminus \pfrak^{n+1}$ there is some element $c\in A\setminus \pfrak$ such that $\up{n}=cf$, and we obtain
\[ cfh=\up{n}h=\up{}f\tilde{h}, \text{ i.e.}\quad ch=\up{}\tilde{h}\in \fsf(E).\]
Finally, as $c\notin \pfrak$, there are $a\in A$ and $x\in \pfrak$ such that $ac+x=1$, and we conclude
\[ h=(ac+x)h=a\up{}\tilde{h}+xh\in \pfrak\, \fsf(E). \qedhere \]
\end{proof}

\begin{prop}\label{prop:H-direct-summand}
The extension of the inclusion $\fsf(E)\hookrightarrow \hat{H}_\pfrak$ to an $A_\pfrak$-linear homomorphism $A_\pfrak\otimes_A \fsf(E)\to \hat{H}_\pfrak$ is still injective, and
the image of this map is a direct summand of $\hat{H}_\pfrak$.
In particular, the map $A_\pfrak\otimes_A \fsf(E)\to \hat{H}_\pfrak$ is an isomorphism if and only if $\rk_A(\fsf(E))=\rk_{A_\pfrak}\hat{H}_\pfrak$.
\end{prop}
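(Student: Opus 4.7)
The plan is to reduce modulo $\pfrak$, invoke Lemma \ref{lem:H-saturated} to obtain an injection of residue-field vector spaces, and then apply Nakayama's lemma on both sides to deduce injectivity and the direct-summand property simultaneously.

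Write $M := A_\pfrak \otimes_A \fsf(E)$ and $N := \hat{H}_\pfrak$, and let $f\colon M \to N$ denote the $A_\pfrak$-linear extension of the embedding $\fsf(E) \hookrightarrow \hat{H}_\pfrak$ from Proposition \ref{prop:H-in-H_pfrak}. By Lemma \ref{lem:H-locally-free}, $\fsf(E)$ is finitely generated and torsion-free over the Dedekind domain $A$, hence locally free of some rank $r := \rk_A \fsf(E)$; consequently $M$ is a free $A_\pfrak$-module of rank $r$. By Remark \ref{rem:p-rk-rk-Hp}, $N$ is a free $A_\pfrak$-module of rank $\trk{\pfrak}(E)$, and I set $s := \trk{\pfrak}(E) - r \geq 0$ (nonnegativity will follow a posteriori from the construction, or can be noted directly from the injection below).

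First I would reduce $f$ modulo $\pfrak$. The composition $\fsf(E) \hookrightarrow N \twoheadrightarrow N/\pfrak N$ has kernel $\fsf(E) \cap \pfrak N = \pfrak\,\fsf(E)$ by Lemma \ref{lem:H-saturated}, so it induces an injection $\fsf(E)/\pfrak\,\fsf(E) \hookrightarrow N/\pfrak N$; identifying $\fsf(E)/\pfrak\,\fsf(E) = M/\pfrak M$, this is precisely the induced map $\bar f$. Next, I would lift an $\Fpp$-basis of $M/\pfrak M$ to elements $v_1,\ldots,v_r \in \fsf(E)$; by Nakayama's lemma over the local ring $A_\pfrak$, these generate $M$, and since $M$ is free of rank $r$, they form an $A_\pfrak$-basis of $M$. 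Because $\bar f$ is injective, the classes $\bar f(\bar v_i)$ are linearly independent in $N/\pfrak N$; I extend them by classes $\bar w_1,\ldots,\bar w_s$ to an $\Fpp$-basis of $N/\pfrak N$, choose lifts $w_j \in N$, and apply Nakayama once more to conclude that $\{f(v_1),\ldots,f(v_r),w_1,\ldots,w_s\}$ is an $A_\pfrak$-basis of $N$.

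Both assertions can then be read off this basis: the $v_i$ map to part of a basis, so $f$ is injective, and the decomposition $N = \bigoplus_{i=1}^r A_\pfrak f(v_i) \oplus \bigoplus_{j=1}^s A_\pfrak w_j$ exhibits the image of $f$ as a direct summand. The final equivalence is then immediate, since $f$ is surjective (equivalently, bijective) if and only if $s = 0$, that is, if and only if $\rk_A(\fsf(E)) = \trk{\pfrak}(E) = \rk_{A_\pfrak}\hat{H}_\pfrak$. I do not anticipate a real obstacle: Lemma \ref{lem:H-saturated} carries the substantive content, and what remains is a standard Nakayama-lemma argument over the DVR $A_\pfrak$.
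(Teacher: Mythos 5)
Your proof is correct and follows essentially the same route as the paper: the substantive input is Lemma \ref{lem:H-saturated}, which gives injectivity of the induced map modulo $\pfrak$, and the rest is Nakayama's lemma over the local ring $A_\pfrak$. The only difference is presentational — the paper verifies the mod-$\pfrak$ linear independence by a contradiction argument over the non-complete localization $A_{(\pfrak)}$, whereas you read it off directly from the kernel computation $\fsf(E)\cap\pfrak\hat{H}_\pfrak=\pfrak\,\fsf(E)$ and then explicitly exhibit the complementary summand.
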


\begin{proof}
As $\fsf(E)$ is torsion free, the extension $A_{(\pfrak)}\otimes_A \fsf(E)\to \hat{H}_\pfrak$ is still injective where $A_{(\pfrak)}$ denotes the (non-complete) local ring of $A$ at $\pfrak$.

By Lemma \ref{lem:H-locally-free}, $A_{(\pfrak)}\otimes_A \fsf(E)$ is free of finite rank, and we can choose an $A_{(\pfrak)}$-basis $\{h_1,\ldots, h_s\}$. Of course, we can even choose $h_i\in \fsf(E)$.

We will show that their images $\bar{h}_1,\ldots, \bar{h}_s\in \hat{H}_\pfrak/\pfrak\hat{H}_\pfrak$ are $\Fpp$-linearly independent. From this the claim follows, since by Nakayama's lemma a set $\{f_1,\ldots,f_r\}$ of elements in $\hat{H}_\pfrak$ form an $A_\pfrak$-basis if and only if their residues modulo $\pfrak$ form an $\Fpp$-basis of $\hat{H}_\pfrak/\pfrak\hat{H}_\pfrak=E[\pfrak]$.

For the contrary, assume that there is a non-trivial relation $\sum_{j=1}^s a_jh_j\in \pfrak\hat{H}_\pfrak$ with $a_1,\ldots, a_s\in A_\pfrak$, not all lying in $\pfrak A_\pfrak$. Take $c_1,\ldots, c_s\in A$ s.t.~$c_i\equiv a_i\mod \pfrak$ for all $i$. Then 
\[ \sum_{j=1}^s c_jh_j\in \pfrak\hat{H}_\pfrak \cap \fsf(E)=\pfrak \fsf(E)\subseteq \up{}(A_{(\pfrak)}\otimes_A \fsf(E)),\]
by Lemma \ref{lem:H-saturated}.

As $h_1,\ldots, h_s$ is a basis of $A_{(\pfrak)}\otimes_A \fsf(E)$, there are $d_1,\ldots, d_s\in A_{(\pfrak)}$ such that $\sum_{j=1}^s c_jh_j=\up{}\cdot (\sum_{j=1}^s d_jh_j)$, i.e.~
\[  0=\sum_{j=1}^s c_jh_j - \up{}\cdot (\sum_{j=1}^s d_jh_j)=\sum_{j=1}^s (c_j-\up{}d_j)h_j, \]
contradicting the assumption that $h_1,\ldots, h_s$ are linearly independent. 
\end{proof}

\begin{cor}\label{cor:uniformizable-implies-constant-prank}
If $E$ is uniformizable, then every $A_{(\pfrak)}$-basis of $A_{(\pfrak)}\otimes_A \fsf(E)$ is an $A_\pfrak$-basis of $\hat{H}_\pfrak$.
In particular, $\hat{H}_\pfrak$ has an $A_\pfrak$-basis consisting of 
special functions, and
we have
$\rk_A(\fsf(E))=\rk_{A_\pfrak}(\hat{H}_\pfrak)= \trk{\pfrak}(E)$. The latter implies that
the $\pfrak$-rank is independent of the prime $\pfrak$.
\end{cor}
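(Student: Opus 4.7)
The plan is to reduce the corollary to Proposition~\ref{prop:H-direct-summand}, which says that the injective map $A_\pfrak\otimes_A \fsf(E)\to \hat H_\pfrak$ is an isomorphism precisely when $\rk_A \fsf(E)=\rk_{A_\pfrak}\hat H_\pfrak$. Once this rank equality is in hand, everything in the corollary follows routinely: since $\fsf(E)$ is locally free over $A$ by Lemma~\ref{lem:H-locally-free}, any $A_{(\pfrak)}$-basis of $A_{(\pfrak)}\otimes_A \fsf(E)$ extends to an $A_\pfrak$-basis of $A_\pfrak\otimes_A \fsf(E)\cong \hat H_\pfrak$, yielding the first claim and the existence of a special-function basis; the equality $\rk_{A_\pfrak}\hat H_\pfrak=\trk{\pfrak}(E)$ is Remark~\ref{rem:p-rk-rk-Hp}; and independence of $\pfrak$ is automatic because $\rk_A\fsf(E)$ depends only on $E$.

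To prove the rank equality, I would use the isomorphism $\jp$ from Theorem~\ref{thm:isom-jp} to replace $\hat H_\pfrak$ by $T_\pfrak(E)$, and the isomorphism $\delta_u:\fd_{A/\Fq[u]}\otimes_A\Lambda_E\xrightarrow{\cong}\fsf(E)$ (together with invertibility of the different ideal) to replace $\fsf(E)$ by $\Lambda_E$. This reduces everything to the statement $\rk_{A_\pfrak}T_\pfrak(E)=\rk_A\Lambda_E$, which is now purely a question about the period lattice. Here uniformizability is the key input: it yields an exact sequence of $A$-modules
\[ 0\to \Lambda_E\to \LieE\xrightarrow{\e} E(\CI)\to 0, \]
with $A$ acting on $\LieE$ via $\dphi$. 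In the generic-characteristic setting, $\dphi_a$ is invertible for every nonzero $a\in A$ (since $\dphi_a-\ell(a)$ is nilpotent and $\ell(a)\ne 0$), so $\LieE$ is a divisible and torsion-free $A$-module. As $A$ is Dedekind, divisibility forces injectivity, giving both $\Hom_A(K_\pfrak/A_\pfrak,\LieE)=0$ and $\Ext^1_A(K_\pfrak/A_\pfrak,\LieE)=0$.

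Applying $\Hom_A(K_\pfrak/A_\pfrak,-)$ to the exponential sequence then produces an isomorphism $T_\pfrak(E)\cong \Ext^1_A(K_\pfrak/A_\pfrak,\Lambda_E)$. Since $K_\pfrak/A_\pfrak$ is already an $A_\pfrak$-module, this Ext agrees with $\Ext^1_{A_\pfrak}(K_\pfrak/A_\pfrak,\Lambda_E\otimes_A A_\pfrak)$; and with $\Lambda_E\otimes_A A_\pfrak\cong A_\pfrak^r$ for $r:=\rk_A\Lambda_E$, a direct computation using $K_\pfrak/A_\pfrak=\varinjlim_n A_\pfrak/\up{n}A_\pfrak$ gives $\Ext^1_{A_\pfrak}(K_\pfrak/A_\pfrak,A_\pfrak)\cong A_\pfrak$, hence $\rk_{A_\pfrak}T_\pfrak(E)=r$ as desired. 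The one technical point I expect to need care with is the change-of-rings identity $\Ext^1_A(K_\pfrak/A_\pfrak,M)\cong \Ext^1_{A_\pfrak}(K_\pfrak/A_\pfrak,M\otimes_A A_\pfrak)$ used in this last step — standard once $K_\pfrak/A_\pfrak$ is viewed as an $A_\pfrak$-module and $A\to A_\pfrak$ as flat, but worth writing out carefully.
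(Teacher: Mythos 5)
Your proposal is correct, and it reaches the key rank identity by a genuinely different (homological) route than the paper. Both arguments share the same frame: reduce via Proposition~\ref{prop:H-direct-summand} to the equality $\rk_A(\fsf(E))=\rk_{A_\pfrak}(\hat H_\pfrak)$, and use $\delta_u$ (with invertibility of the different) to trade $\fsf(E)$ for $\Lambda_E$; the two proofs diverge only in how they compare $\Lambda_E$ with the $\pfrak$-adic side. The paper stays at the level of $\pfrak$-torsion: from the functional equation one sees $\e^{-1}(E[\pfrak])=\pfrak^{-1}\Lambda_E$, so surjectivity of $\e$ gives $\pfrak^{-1}\Lambda_E/\Lambda_E\cong E[\pfrak]$ and hence $\dim_{\Fpp}E[\pfrak]=\rk_A(\Lambda_E)$ --- a completely elementary count. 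You instead pass to the Tate module, use that $\LieE$ is divisible and torsion-free over $A$ (hence injective over the Dedekind domain $A$, with $\Hom_A(K_\pfrak/A_\pfrak,\LieE)=0$), and extract $T_\pfrak(E)\cong\Ext^1_A(K_\pfrak/A_\pfrak,\Lambda_E)\cong A_\pfrak\otimes_A\Lambda_E$ from the exponential sequence. Your version costs a little more bookkeeping (the ${\varprojlim}^1$-vanishing in computing $\Ext^1_A(K_\pfrak/A_\pfrak,-)$ of a torsion-free lattice, which does go through since $\Hom_A(A/\pfrak^n,\Lambda_E)=0$), but it buys a canonical isomorphism $T_\pfrak(E)\cong A_\pfrak\otimes_A\Lambda_E$ rather than just an equality of ranks --- essentially the content of Theorem~\ref{mainthm:sf-in-tate-module}, which the paper assembles instead by composing $\delta_u$, the inclusion of Proposition~\ref{prop:H-direct-summand}, and $\jp$. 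Both are sound; the paper's is shorter and avoids homological algebra.
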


\begin{proof}
We are going to show that $\rk_A(\Lambda_E)=\dim_{\Fpp} E[\pfrak]$. The rest then follows, since
$\rk_A(\fsf(E))=\rk_A(\Lambda_E)$ by the isomorphism $\delta_{u}$, and $\trk{\pfrak}(E)= \dim_{\Fpp} E[\pfrak]= \rk_{A_\pfrak}\hat{H}_\pfrak$.

By definition of $\Lambda_E$,
the exponential map $\e$ induces an injection
\[ \e:\LieE(\CI)/\Lambda_E \hookrightarrow E(\CI). \]
Using the functional equation of the exponential map, we see that the image $\e(\alpha)$ of an element $\alpha\in \LieE(\CI)$ lies in $E[\pfrak]$, if and only if $\alpha\in \pfrak^{-1}\Lambda_E$.
Hence, if $E$ is uniformizable, i.e.~$\e$ is surjective, then we obtain an isomorphism of $\Fpp$-vector spaces $\pfrak^{-1}\Lambda_E/\Lambda_E\to E[\pfrak]$, in particular 
\[\dim_{\Fpp} \left(\pfrak^{-1}\Lambda_E/\Lambda_E \right)=\dim_{\Fpp} E[\pfrak]=\trk{\pfrak}(E).\]

As $\Lambda_E$ is a locally free $A$-module, we further have
$\dim_{\Fpp} (\pfrak^{-1}\Lambda_E/\Lambda_E)=\rk_A(\Lambda_E)$.
\end{proof}

The previous corollary shows an unexpected connection between uniformizability and regularity as defined by Yu in \cite{jy:ahdm}.

\begin{defn} (cf.~\cite[p.~218]{jy:ahdm})
An Anderson $A$-module $E$ is said to be \emph{regular}, if there exists an integer $r\geq 0$ such that for all ideals $\afrak\lhd A$, the torsion $E[\afrak]$ is an $A/\afrak$-module of rank $r$. 
\end{defn} 

\begin{cor}\label{cor:uniformizable-implies-regular}
Every uniformizable Anderson $A$-module $E$ is regular.
\end{cor}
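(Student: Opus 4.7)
The plan is to repeat the uniformizability-plus-functional-equation argument from the proof of Corollary~\ref{cor:uniformizable-implies-constant-prank}, but for arbitrary (not just prime) ideals $\afrak\lhd A$. Since $E$ is uniformizable, $\exp_E:\Lie E(\CI)\to E(\CI)$ is surjective, so it factors through an isomorphism $\Lie E(\CI)/\Lambda_E\xrightarrow{\cong}E(\CI)$. The functional equation $\phi_a\circ\exp_E=\exp_E\circ d\phi_a$ then shows that for any ideal $\afrak\lhd A$, an element $\exp_E(\alpha)$ lies in $E[\afrak]$ if and only if $d\phi_a(\alpha)\in\Lambda_E$ for all $a\in\afrak$, which (by genericity of the characteristic and the identification of $\Lie E$ with its $A$-action) is equivalent to $\alpha\in\afrak^{-1}\Lambda_E$. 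Hence $\exp_E$ induces an isomorphism of $A/\afrak$-modules
\[
\afrak^{-1}\Lambda_E/\Lambda_E \xrightarrow{\ \cong\ } E[\afrak].
\]

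Next I would set $r:=\rk_A(\Lambda_E)$. By Lemma~\ref{lem:H-locally-free}, $\Lambda_E$ is finitely generated and torsion-free over $A$. Because $A$ is a Dedekind domain (being the coordinate ring of a smooth affine curve), every finitely generated torsion-free $A$-module is projective, so $\Lambda_E$ is locally free of rank $r$. Tensoring the short exact sequence $0\to A\to\afrak^{-1}\to\afrak^{-1}/A\to 0$ of $A$-modules with $\Lambda_E$ (and using flatness of $\Lambda_E$) gives an isomorphism
\[
\afrak^{-1}\Lambda_E/\Lambda_E \;\cong\; \Lambda_E \otimes_A (\afrak^{-1}/A).
\]
Now in a Dedekind domain $\afrak^{-1}/A$ is isomorphic to $A/\afrak$ as an $A/\afrak$-module, since this can be checked locally at each prime $\pfrak\mid\afrak$, where it reduces to the obvious DVR statement $\pi^{-n}A_\pfrak/A_\pfrak\cong A_\pfrak/(\pi^n)$. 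Combining, $E[\afrak]\cong \Lambda_E\otimes_A A/\afrak=\Lambda_E/\afrak\Lambda_E$, which is locally free of rank $r$ as an $A/\afrak$-module since $\Lambda_E$ is locally free of rank $r$ over $A$.

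As the rank $r=\rk_A(\Lambda_E)$ depends only on $E$ and not on $\afrak$, this verifies Yu's regularity condition and proves the corollary. There is no real obstacle here: the only point that requires a sentence of justification is the Dedekind identification $\afrak^{-1}/A\cong A/\afrak$, and the argument is essentially a direct extension of the special case $\afrak=\pfrak$ treated in the preceding corollary.
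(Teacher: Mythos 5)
Your proof is correct, but it takes a different route from the paper's. The paper deduces the corollary from what it has already established: Corollary~\ref{cor:uniformizable-implies-constant-prank} gives $\rk_{A_\pfrak}(\hat H_\pfrak)=r$ for every prime $\pfrak$, hence $\rk_{A/\pfrak^n}(E[\pfrak^n])=r$ for all $n$, and the general case then follows from the decomposition $E[\afrak\bfrak]=E[\afrak]\times E[\bfrak]$ for coprime ideals and induction on the number of prime factors. You instead bypass the Tate-module/$\hat H_\pfrak$ machinery and the CRT induction entirely: you extend the exponential computation of Corollary~\ref{cor:uniformizable-implies-constant-prank} (which the paper carries out only for $\afrak=\pfrak$) to arbitrary ideals, obtaining the isomorphism $E[\afrak]\cong\afrak^{-1}\Lambda_E/\Lambda_E\cong\Lambda_E/\afrak\Lambda_E$ directly from surjectivity of $\exp_E$ together with the Dedekind-domain facts that $\Lambda_E$ is projective of constant rank and $\afrak^{-1}/A\cong A/\afrak$. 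Each step of yours checks out: the equivalence $\{\alpha:\afrak\alpha\subseteq\Lambda_E\}=\afrak^{-1}\Lambda_E$ holds because $\LieE(\CI)$ is a $K$-vector space under $\dphi$ (generic characteristic makes each $\dphi_a$, $a\ne 0$, invertible) and $\afrak\afrak^{-1}=A$; and since $A/\afrak$ is a finite product of local Artinian rings, "locally free of constant rank $r$" does yield a free $A/\afrak$-module of rank $r$ as Yu's definition requires. What the two approaches buy: the paper's is shorter given its prior results and keeps the identification of $r$ with the common $\pfrak$-rank in view; yours is more self-contained, yields the stronger and cleaner statement $E[\afrak]\cong\Lambda_E/\afrak\Lambda_E$ for every ideal in one stroke, and isolates exactly where uniformizability enters (surjectivity of $\exp_E$).
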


\begin{proof}
Let $E$ be uniformizable, and set $r=\rk_A(\fsf(E))$. Then by Corollary~\ref{cor:uniformizable-implies-constant-prank}, we have $\rk_{A_\pfrak}(\hat{H}_\pfrak)=r$ for all primes $\pfrak$. Hence also 
$\rk_{A/\pfrak^n}(E[\pfrak^n])=r$ for all primes $\pfrak$ and all $n\geq 1$.

Further, for ideals $\afrak,\bfrak\lhd A$ which are relatively prime, one has $E[\afrak\cdot \bfrak]=E[\afrak]\times E[\bfrak]$ as modules over $A/(\afrak\bfrak)\cong (A/\afrak) \times (A/\bfrak)$.
Hence, the claim follows by induction on the number of distinct prime factors using the prime decomposition of ideals.
\end{proof}

\section{Torsion points as values of hyperderivatives}\label{sec:torsion-points-as-values}

For a special function $h\in \fsf(E)$, its evaluation at $\pfrak$ is a Gauss-Thakur sum
in $\Fpp\otimes E[\pfrak]$ (see \cite{qg-am:sfgtshrd}).
In a similar manner as in \cite{ba-fp:ugtsls} and \cite{am-rp:iddbcppte} for the prime-power torsion of the Carlitz module, we are going to obtain also the $\pfrak^{n+1}$-torsion as evaluation at $\pfrak$.

In Example \ref{exmp:hyperderivations-wrt-pfrak}, we already introduced the hyperdifferential operators $(\hde{\up{}}{n})_{n\geq 0}$ with respect to $\up{}$ on $A_{(\pfrak)}$.
By $\CI$-linear extension, they also define hyperderivations on $A_{(\pfrak)}\otimes \CI$, and by Lemma \ref{lem:hd-p-adic-continuous} can be extended to $(A\otimes \CI)_\pfrak$.
After choice of coordinate system, we get hyperderivations on $(A\otimes E(\CI))_\pfrak=\Epfrak$, and the hyperderivations do not depend on that choice. 

\begin{prop}\label{prop:formula-for-hyperderivation}
Let $(\partial_{\up{}}^{(n)})_{n\geq 0}$  be the hyperderivation on $\Epfrak$ given above. Then for 
$\sum_{k=0}^\infty \up{k}e_k\in \Epfrak $ with $e_k\in \Fpp \otimes E(\CI)$. One has
\[ \partial_{\up{}}^{(n)}\left( \sum_{k=0}^\infty \up{k}e_k\right)=
\sum_{k=n}^\infty \binom{k}{n} \up{k-n}e_k. \]
\end{prop}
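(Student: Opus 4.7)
My plan is to apply the generalized Leibniz rule term by term, using continuity of $\hde{\up{}}{n}$, and then exploit the fact that the hyperdifferential operators act trivially on $\Fpp \otimes E(\CI)$. First, since the operator $\hde{\up{}}{n}$ is $\Fq$-linear and continuous on $\Epfrak$ by construction (it was produced as the $\pfrak$-adic continuous extension of the $\CI$-linear extension from $A_{(\pfrak)}$, and continuity follows from Lemma~\ref{lem:hd-p-adic-continuous}), I can interchange it with the convergent series to reduce the identity to showing, for each $k \geq 0$,
\[ \hd{\up{}}{n}{\up{k}e_k} = \binom{k}{n}\up{k-n}e_k. \]
For this I would apply the generalized Leibniz rule
\[ \hd{\up{}}{n}{\up{k}e_k} = \sum_{i+j=n} \hd{\up{}}{i}{\up{k}}\cdot \hd{\up{}}{j}{e_k}. \]

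The crux will be the claim that $\hd{\up{}}{j}{e_k} = 0$ for every $j \geq 1$ and every $e_k \in \Fpp \otimes E(\CI)$. To establish it, fix a coordinate system $E \cong \GG_{a,L}^d$ and observe that $\Fpp \otimes E(\CI) \cong (\Fpp \otimes \CI)^d$, on which $\hde{\up{}}{j}$ acts coordinate-wise as the $\CI$-linear extension of its restriction to $\Fpp \subset A_\pfrak$. The key input is Proposition~\ref{prop:unique-extensions}: the trivial hyperderivation on $\Fq$ extends uniquely to the finite separable extension $\Fpp$, and the trivial hyperderivation on $\Fpp$ is one such extension, forcing $\hd{\up{}}{j}{\alpha} = 0$ for all $\alpha \in \Fpp$ and $j \geq 1$. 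Combined with $\CI$-linearity, this propagates to the vanishing of $\hde{\up{}}{j}$ on all of $\Fpp \otimes E(\CI)$ for $j \geq 1$.

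With the vanishing in hand, only the $j = 0$ term survives in the Leibniz expansion, and the explicit formula $\hd{\up{}}{i}{\up{k}} = \binom{k}{i} \up{k-i}$ recalled in Example~\ref{exmp:hyperderivations-wrt-pfrak} finishes the computation (with the convention $\binom{k}{n} = 0$ for $k < n$, which also accounts for the shift of the lower summation index from $0$ to $n$ in the statement). The main technical worry will be justifying the continuity interchange and the vanishing on $\Fpp \otimes E(\CI)$, but both are essentially hard-wired into how the operator was constructed, so I do not expect a genuine obstacle here.
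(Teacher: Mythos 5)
Your proposal is correct and follows essentially the same route as the paper: triviality of the hyperdifferential operators on $\Fpp$ (via the uniqueness of extension to the finite separable extension $\Fpp/\Fq$), the explicit formula $\hd{\up{}}{i}{\up{k}}=\binom{k}{i}\up{k-i}$, and then linearity together with $\pfrak$-adic continuity. The paper merely leaves the Leibniz-rule computation implicit where you spell it out, so there is nothing further to add.
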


\begin{proof}
As the hyperdifferential operators are trivial on $\Fq$, and $\Fpp$ is finite over $\Fq$, the hyperdifferential operators are also trivial on $\Fpp$. Furthermore by definition, we have
$\partial_{\up{}}^{(n)}(\up{k})= \binom{k}{n} \up{k-n}$.
The formula then follows by linearity and $\pfrak$-adic continuity.
\end{proof}

We recall that for $h\in \Epfrak$, we defined the evaluation of $h$ at $\pfrak$ -- denoted by $h(\pfrak)$ -- to be the image of $h$ under the residue map $\Epfrak/\pfrak\Epfrak\isom \Fpp\otimes E(\CI)$.

\begin{prop}\label{prop:special-values}
Let $(E,\phi)$ be a uniformizable Anderson $A$-module, and $h_1,\ldots, h_r\in \fsf(E)$ an $A_{(\pfrak)}$-basis of $A_{(\pfrak)}\otimes_A \fsf(E)$ which we view as elements of $\hat{H}_\pfrak$ via the inclusion $\fsf(E)\hookrightarrow \hat{H}_\pfrak$.
For each $n\in \NN$,
\begin{enumerate}
\item the evaluations of $\partial_{\up{}}^{(n)}(h_i)$ at $\pfrak$ for $(i=1,\ldots, r)$ are in $\Fpp\otimes E[\pfrak^{n+1}]$, and
\item their images under $(\tr\otimes \id)$, i.e.~the elements $(\tr\otimes \id)\left(\partial_{\up{}}^{(n)}(h_i)(\pfrak)\right)$ for $(i=1,\ldots, r)$, form an $(A/\pfrak^{n+1})$-basis of $E[\pfrak^{n+1}]$.
\end{enumerate}
\end{prop}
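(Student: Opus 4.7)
The plan is to deduce everything from the $A_\pfrak$-linear isomorphism $\jp\colon \hat H_\pfrak\to T_\pfrak(E)$ of Theorem \ref{thm:isom-jp}(3) by computing the values $\jp(h_i)(\up{-n-1})$ explicitly. Since $E$ is uniformizable, Corollary \ref{cor:uniformizable-implies-constant-prank} guarantees that $h_1,\ldots,h_r$ is in fact an $A_\pfrak$-basis of $\hat H_\pfrak$, and so $\jp(h_1),\ldots,\jp(h_r)$ is an $A_\pfrak$-basis of $T_\pfrak(E)$. By the description of the Tate module as $\varprojlim_n E[\pfrak^{n+1}]$ recalled at the end of Section \ref{sec:notation}, the evaluations $\jp(h_i)(\up{-n-1})$ therefore form an $A/\pfrak^{n+1}$-basis of $E[\pfrak^{n+1}]$ for each $n$. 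Hence (2) will follow once we identify $\jp(h_i)(\up{-n-1})$ with $(\tr\otimes\id)\bigl(\partial_{\up{}}^{(n)}(h_i)(\pfrak)\bigr)$.

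To carry out this identification, write the $\up{}$-expansion $h_i=\sum_{k\geq 0}\up{k}e_{k,i}$ with $e_{k,i}\in\Fpp\otimes E(\CI)$. Proposition \ref{prop:formula-for-hyperderivation} then gives $\partial_{\up{}}^{(n)}(h_i)(\pfrak)=e_{n,i}$, since only the $k=n$ term survives reduction modulo $\pfrak$. On the other hand, by Theorem \ref{thm:isom-jp}(2), the map $\jp$ factors as $h_i\mapsto d\up{}\otimes h_i\mapsto \tilde f_i\mapsto(\tr\otimes\id)\circ\tilde f_i$, where $\tilde f_i\in\Hom_{\Fpp\otimes A}(K_\pfrak/A_\pfrak,\Fpp\otimes E(\CI))$ is defined via residues. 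Using $\res_\pfrak(\up{k-n-1}d\up{})=\delta_{k,n}$, the formula in Theorem \ref{thm:isom-jp}(1) yields
\[ \tilde f_i(\up{-n-1})=\sum_{k\geq 0}\res_\pfrak(\up{k-n-1}d\up{})\otimes e_{k,i}=e_{n,i}=\partial_{\up{}}^{(n)}(h_i)(\pfrak),\]
and composing with $\tr\otimes\id$ delivers the claimed formula for $\jp(h_i)(\up{-n-1})$.

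For part (1) we exploit the $\Fpp\otimes A$-linearity of $\tilde f_i$: any $a\in\pfrak^{n+1}$ satisfies $a\cdot\up{-n-1}\in A_\pfrak$, so $a\cdot\up{-n-1}=0$ in $K_\pfrak/A_\pfrak$, whence
\[ (\id_{\Fpp}\otimes\phi_a)\bigl(\partial_{\up{}}^{(n)}(h_i)(\pfrak)\bigr)=\tilde f_i(a\cdot\up{-n-1})=0.\]
Hence $\partial_{\up{}}^{(n)}(h_i)(\pfrak)$ is annihilated by $\phi_a$ for every $a\in\pfrak^{n+1}$, i.e.\ it lies in $\Fpp\otimes E[\pfrak^{n+1}]$.

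The only real obstacle is organizational: one must keep apart the two commuting $A$-actions on $\Epfrak$ (the scalar action via the first tensor factor versus the $\phi$-action on the second), and ensure the intermediate $\Fpp\otimes A$-linear description of $\tilde\jp$ from Theorem \ref{thm:isom-jp}(2) is used, so that the vanishing deduced above matches the $A$-action defining $E[\pfrak^{n+1}]$. Once this bookkeeping is in place, the residue computation and the linearity argument are immediate.
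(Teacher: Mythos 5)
Your proposal is correct and follows essentially the same route as the paper: both reduce the statement via the explicit description of $\jp$ from Theorem \ref{thm:isom-jp}, identify $(\tr\otimes\id)\bigl(\partial_{\up{}}^{(n)}(h_i)(\pfrak)\bigr)$ with $\jp(h_i)(\up{-n-1})$ using Proposition \ref{prop:formula-for-hyperderivation}, and conclude with Corollary \ref{cor:uniformizable-implies-constant-prank} together with the identification $T_\pfrak(E)\cong\varprojlim_n E[\pfrak^{n+1}]$. The only cosmetic difference is that you run the forward residue formula on the $\up{}$-expansion while the paper plugs the explicit inverse of $\jp$ into the hyperderivative; the content is the same.
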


\begin{proof}
We use the explicit inverse of the isomorphism $\jp$ given in the proof of Theorem~\ref{thm:isom-jp}.
From this we deduce that setting $f_j:=\jp(h_j)\in T_\pfrak(E)$, we can write $h_j$ as
\[ h_j = \sum_{j=0}^\infty \up{j} \sum_{i=1}^{\degp} \alpha_i\otimes f_j(\check{\alpha}_i \up{-j-1}). \]
Using Proposition~\ref{prop:formula-for-hyperderivation}, we see that 
\[ \partial_{\up{}}^{(n)}(h_i)(\pfrak) = \sum_{i=1}^{\degp} \alpha_i\otimes f_j(\check{\alpha}_i \up{-n-1})\in \Fpp\otimes E[\pfrak^{n+1}].\]
Further,
\begin{eqnarray*}
(\tr\otimes \id)\left(\partial_{\up{}}^{(n)}(h_i)(\pfrak)\right) &=& \sum_{i=1}^{\degp} \tr(\alpha_i)\cdot f_j(\check{\alpha}_i \up{-n-1})\\
&=& f_j\left( \sum_{i=1}^{\degp} \tr(\alpha_i) \check{\alpha}_i \up{-n-1}\right) = f_j(\up{-n-1}),
\end{eqnarray*}
using the properties of the bases $\{\alpha_i\}$ and $\{\check{\alpha}_i\}$ mentioned above.

By Corollary \ref{cor:uniformizable-implies-constant-prank}, $\{h_1,\ldots, h_r\}$ is an $A_\pfrak$-basis of $\hat{H}_\pfrak$.
Hence, $\{f_1,\ldots, f_r\}$ is an $A_\pfrak$-basis of $T_\pfrak(E)$, and therefore the values
$\{ f_1(\up{-n-1}),\ldots, f_r(\up{-n-1})\}$ are an $(A/\pfrak^{n+1})$-basis of $E[\pfrak^{n+1}]$.
\end{proof}

\begin{rem}
In the same manner as the $\up{}$-expansion depends on the choice of the uniformizer $\up{}$, also the hyperdifferential operators $ \partial_{\up{}}^{(n)}$ depend on the choice of $\up{}$.

In the next theorem, we will see that the dependence is not too bad. This means that up to scalars we always get the same basis of $E[\pfrak^{n+1}]$ modulo
$E[\pfrak^{n}]$. The next theorem is even stronger, and shows that we can choose ``almost any'' element $t\in A\setminus \Fq$ instead of a uniformizer $\up{}$.
\end{rem}

\begin{thm}\label{thm:torsion-as-special-values}
Let $(E,\phi)$ be a uniformizable Anderson $A$-module, and $h_1,\ldots, h_r\in \fsf(E)$ an $A_{(\pfrak)}$-basis of $A_{(\pfrak)}\otimes_A \fsf(E)$ which we view as elements of $\hat{H}_\pfrak$ via the inclusion $\fsf(E)\hookrightarrow \hat{H}_\pfrak$.
Further, let $t\in A\setminus \Fq$ such that $\hd{\up{}}{1}{t}\in A_{(\pfrak)}^\times$. 
Then
\begin{enumerate}
\item The hyperderivation $(\hdte{n})_{n\geq 0}$ on $\Fq[t]$ uniquely extends to $A_{(\pfrak)}$.
\item For all $h\in \hat{H}_\pfrak$ and $n\geq 0$,  one has $\partial_{t}^{(n)}(h)(\pfrak)\in \Fpp\otimes E[\pfrak^{n+1}]$ as well as
\[    \hdt{n}{h}(\pfrak) \equiv \left( \left.\hd{\up{}}{1}{t}\right.^{-n}\cdot
 \hd{\up{}}{n}{h}\right) (\pfrak) \mod \Fpp\otimes E[\pfrak^{n}]. \]
\item For all $n\geq 0$, the elements $(\tr\otimes\id)\left(\hd{t}{n}{h_i}(\pfrak)\right)$ for $i=1,\ldots, r$ form an $(A/\pfrak^{n+1})$-basis of $E[\pfrak^{n+1}]$. 

\end{enumerate}
\end{thm}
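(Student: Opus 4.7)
Plan: For (1), the hypothesis $\hd{\up{}}{1}{t}\in A_{(\pfrak)}^\times$ says that $dt$ generates $\Omega^1_{A_{(\pfrak)}/\Fq}=A_{(\pfrak)}\,d\up{}$, so $A_{(\pfrak)}$ is \'etale over $\Fq[t]_{(\pfrak\cap\Fq[t])}$, and Proposition~\ref{prop:unique-extensions} gives the unique extension of the hyperderivation. Lemma~\ref{lem:hd-p-adic-continuous} then shows that the operators are $\pfrak$-adically continuous, so they further extend to $A_\pfrak$, then $\CI$-linearly and continuously to $(\CA)_\pfrak$, and coordinate-wise to $\Epfrak$.

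For (2), I would write $h\in\hat{H}_\pfrak$ in its $\up{}$-expansion $h=\sum_{j\geq 0}\up{j}e_j$ with $e_j\in\Fpp\otimes E(\CI)$. Expanding $a=\sum_k a_k\up{k}\in A_\pfrak$ with $a_k\in\Fpp$ and comparing the $\up{m}$-coefficients in $\phi_a(h)=ah$ yields $\phi_a(e_m)=\sum_{k=0}^m a_k\,e_{m-k}$; for $a\in\pfrak^{j+1}$ one has $a_0=\dots=a_j=0$, so $\phi_a(e_j)=0$ and hence $e_j\in\Fpp\otimes E[\pfrak^{j+1}]$. Moreover $e_n=\hd{\up{}}{n}{h}(\pfrak)$ by Proposition~\ref{prop:formula-for-hyperderivation}. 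Since $\D{t}$ is an $\Fpp$-algebra homomorphism acting trivially on the $E(\CI)$-factor, $\D{t}(h)=\sum_{j\geq 0}e_j\,\D{t}(\up{})^j$. The reciprocal relation $\hd{t}{1}{\up{}}\cdot\hd{\up{}}{1}{t}=1$ (from $\hd{t}{1}{t}=1$ and the chain rule) shows that modulo $\pfrak$ we have $\D{t}(\up{})=\gamma^{-1}X+O(X^2)$ in $\Fpp[\![X]\!]$, where $\gamma:=\hd{\up{}}{1}{t}(\pfrak)\in\Fpp^\times$, so $\D{t}(\up{})^j=\gamma^{-j}X^j+O(X^{j+1})$ modulo $\pfrak$. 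Extracting the coefficient of $X^n$ gives
\[ \hd{t}{n}{h}(\pfrak)=\gamma^{-n}e_n+\sum_{j<n}\beta_{n,j}\,e_j \]
for some $\beta_{n,j}\in\Fpp$. Since $e_j\in\Fpp\otimes E[\pfrak^{j+1}]\subseteq\Fpp\otimes E[\pfrak^n]$ for $j<n$, the correction term lies in $\Fpp\otimes E[\pfrak^n]$, while $\gamma^{-n}e_n=((\hd{\up{}}{1}{t})^{-n}\hd{\up{}}{n}{h})(\pfrak)\in\Fpp\otimes E[\pfrak^{n+1}]$; this yields both assertions of (2).

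For (3), applying $\tr\otimes\id$ to the congruence in (2) and using the duality $\sum_j\tr(c\alpha_j)\check\alpha_j=c$ for $c\in\Fpp$ (with $\{\alpha_j\},\{\check\alpha_j\}$ the $\tr$-dual bases of $\Fpp/\Fq$) gives $(\tr\otimes\id)(\gamma^{-n}\hd{\up{}}{n}{h_i}(\pfrak))=\gamma^{-n}\cdot f_i(\up{-n-1})$ with $f_i=\jp(h_i)$. Since $\gamma^{-n}\in\Fpp^\times\subseteq(A/\pfrak^{n+1})^\times$, multiplication by $\gamma^{-n}$ is an $A/\pfrak^{n+1}$-module automorphism of $E[\pfrak^{n+1}]$, so by Proposition~\ref{prop:special-values} the set $\{\gamma^{-n}f_i(\up{-n-1})\}_{i=1}^r$ is again an $(A/\pfrak^{n+1})$-basis of $E[\pfrak^{n+1}]$. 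Finally, because $E[\pfrak^n]=\up{}\cdot E[\pfrak^{n+1}]$, Nakayama's lemma (for the local ring $A/\pfrak^{n+1}$ with maximal ideal $(\up{})$) implies that perturbing a basis by elements of $E[\pfrak^n]$ preserves the basis property; hence $\{(\tr\otimes\id)(\hd{t}{n}{h_i}(\pfrak))\}_{i=1}^r$ is an $(A/\pfrak^{n+1})$-basis.

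The main obstacle I anticipate is the change-of-variable computation in (2), particularly making the hyperderivative chain rule $\hd{t}{1}{\up{}}\cdot\hd{\up{}}{1}{t}=1$ precise in $A_\pfrak$ and correctly propagating it through the powers of $\D{t}(\up{})$; once this is in place, the torsion and basis claims reduce to bookkeeping with Proposition~\ref{prop:special-values} and Nakayama.
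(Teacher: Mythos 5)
Your proposal is correct and follows essentially the same route as the paper: part~(1) via étaleness and $\pfrak$-adic continuity, part~(2) via the chain rule for iterative derivations applied to the $\up{}$-expansion $h=\sum_j\up{j}e_j$ (you phrase it through the algebra-homomorphism property of $\D{t}$ and the identity $\D{t,\pfrak}(\up{})=\gamma^{-1}X+O(X^2)$, the paper cites the explicit chain-rule formula, but the computation is the same, and your direct verification that $e_j\in\Fpp\otimes E[\pfrak^{j+1}]$ from $\phi_a(h)=ah$ is just an unwinding of Proposition~\ref{prop:special-values}), and part~(3) by combining (2) with Proposition~\ref{prop:special-values} via Nakayama. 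The only points worth making explicit in your write-up are that the justification $(\tr\otimes\id)(\gamma^{-n}\hd{\up{}}{n}{h_i}(\pfrak))=\gamma^{-n}f_i(\up{-n-1})$ uses the $\Fpp$-linearity of $f_i\in T_\pfrak(E)$ together with the $\tr$-duality (since $\tr\otimes\id$ itself is only $\Fq$-linear), and that the identity $E[\pfrak^n]=\pfrak\cdot E[\pfrak^{n+1}]$ and the step from generators to a basis rely on $E[\pfrak^{n+1}]$ being free of rank $r$ over $A/\pfrak^{n+1}$, which comes from uniformizability via Corollary~\ref{cor:uniformizable-implies-regular}.
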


\begin{proof}
The first part is just the one-dimensional case of \cite[Theorem~4.5]{am:crfhdipm}, and the third part directly follows from the second part and Proposition~\ref{prop:special-values}.

The second part is clear for $n=0$, so we assume $n\geq 1$.
Using the chain rule for iterative derivations (see \cite[Proposition~7.2 \& Proposition~7.3]{ar:icac} or 
\cite[Theorem~4.1]{am:crfhdipm}), we obtain for any $h\in \Epfrak$:
\begin{eqnarray*}
  \hd{t}{n}{h} &=& \sum_{k=1}^n \hd{\up{}}{k}{h}\cdot \sum_{\substack{\substack{j_1,\ldots, j_k\geq 1\\ j_1+\ldots +j_k=n}}}  \hd{t}{j_1}{\up{}}\cdots  \hd{t}{j_k}{\up{}} \\
&=& \hd{\up{}}{n}{h}\cdot \left.\hd{t}{1}{\up{}}\right.^n
+ \sum_{k=1}^{n-1} \hd{\up{}}{k}{h}\cdot \sum_{\substack{j_1,\ldots, j_k\geq 1\\ j_1+\ldots +j_k=n}}  \hd{t}{j_1}{\up{}}\cdots  \hd{t}{j_k}{\up{}}
\end{eqnarray*}
If we evaluate at $\pfrak$ for $h\in \fsf(E)$, and use Proposition~\ref{prop:special-values} as well as 
$\hd{t}{1}{\up{}}=\hd{\up{}}{1}{t}^{-1}$, we obtain the desired congruence.
\end{proof}

\begin{rem}
In the polynomial case $A=\Fq[t]$ in \cite{am-rp:tcagfdte}, Perkins and me, we always considered hyperderivation with respect to $t$. The previous theorem justifies these considerations:
Namely, for any prime ideal $\pfrak\subseteq \Fq[t]$ generated by a polynomial $\up{}(t)$, its derivative $\hd{t}{1}{\up{}}$ is prime to $\up{}$. Hence, the valuation at $\pfrak$ of 
$\hd{\up{}}{1}{t}=\hd{t}{1}{\up{}}^{-1}\in K$ is zero. In other words,
$\hd{\up{}}{1}{t}\in A_{(\pfrak)}^\times$.
Therefore, by the previous theorem, the hyperderivation with respect to the indeterminate $t$ can be used for every prime $\pfrak$ in $\Fq[t]$. 

Furthermore, we can make the relation of the coefficients $c_{j,(n),l}\in E[\pfrak^{n+1}]$ given in \cite[Eq.~(3.1)]{am-rp:tcagfdte} to our evaluations
$\hdt{n}{h}(\pfrak)\in \Fpp\otimes  E[\pfrak^{n+1}]$ quite explicit.
Namely, after choosing a root $\zeta\in \Fpp$ of $\pfrak\in \Fq[t]$, we can write $\hdt{n}{h}(\pfrak)$ uniquely as
\[ \hdt{n}{h}(\pfrak) = \sum_{l=0}^{\degp-1} \zeta^l \otimes e_l  \]
for some $e_l\in E[\pfrak^{n+1}]$. Then for $h=\omega_j$, one has $e_l= c_{j,(n),l}$.
\end{rem}

\section{$A$-motives of non-abelian Anderson $A$-modules}

In this section, we will provide more insight into the structure of $A$-motives of non-abelian Anderson $A$-modules.

Recall that for an Anderson $A$-module $E$, we consider its $A$-motive
\[ \mot:=\mot(E)=\Hom_{\vs}(E,\GG_{a,L}) \]
where the $A$-action is induced by the $A$-action on $E$, and the $L\{\tau\}=\End_{\vs}(\GG_{a,L})$-action is induced by the action on $\GG_{a,L}$.

As $E$ is a vector space scheme isomorphic to $\GG_{a,L}^d$, the module $\mot(E)$ is always a free and finitely generated module over 
$L\{\tau\}=\End_{\vs}(\GG_{a,L})$ of dimension $d$. However, if $E$ is not abelian, $\mot(E)$ is by definition not finitely generated as $\LA$-module.

\subsection{Finiteness of the rank of the $A$-motive}

The next theorem shows that also in the non-abelian case, the structure of the $A$-motive $\mot(E)$ is not too bad, i.e.~after a certain localization of the ring $\LA$, we obtain a finitely generated free module.

\begin{thm}\label{thm:rational-A-motive-f-dim}
Let $\Quot(\LA)$ be the field of fractions of $\LA$.
The $A$-motive $\mot:=\mot(E)$ associated to $E$ has the following properties:
\begin{enumerate}
\item\label{item:1} $\Quot(\LA)\otimes_{\LA}\mot $ is a finite dimensional $\Quot(\LA)$-vector space. 
\item\label{item:2} There is an element $f\in \LA$ such that one has:
Let $F$ be the multiplicative subset generated by $f$ and all its images under (iterated) $\tau$-twists, then $(\locLA)\otimes_{\LA} \mot$ is a difference module which is finitely generated as $(\locLA)$-module.
\item\label{item:3} The element $f\in \LA$ and the corresponding set $F$ as in \eqref{item:2}, can be chosen such that $(\locLA)\otimes_{\LA} \mot$ is a free module.
\end{enumerate}
\end{thm}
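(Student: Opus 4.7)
The plan is to first establish a filtration-theoretic observation capturing the Anderson nilpotency, deduce part (2) from it, derive (1) as an immediate consequence, and finally refine the choice of $f$ to obtain (3). I would pick a separating transcendence basis element $t \in A \setminus \Fq$ so that $A$ is finite separable over $\Fq[t]$; hence $\LA$ is finite over $L[t]$, and it suffices to establish the statements with $L[t]$ in place of $\LA$ (then extend by the finite base-change $\LA/L[t]$). Fix an $L\{\tau\}$-basis $e_1, \ldots, e_d$ of $\mot$ and write the right $t$-action as $\Phi_t = A_0 + A_1 \tau + \ldots + A_s \tau^s \in \Mat_d(L\{\tau\})$ with $A_k \in \Mat_d(L)$ and $A_0 = \dphi_t = \ell(t) I + N$, where $N$ is nilpotent of index $n \leq d$ (so $A_0$ is invertible with determinant $\ell(t)^d$).

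The key observation is that each $\tau^k \mot$ is stable under the $\LA$-action (since the left $\tau$-action and right $\LA$-action commute), and on the quotient $\tau^k \mot / \tau^{k+1}\mot$, a $d$-dimensional $L$-vector space with basis $\{\tau^k e_i \bmod \tau^{k+1}\mot\}$, the element $t$ acts by the Frobenius twist $A_0^{(k)} = \ell(t)^{q^k} I + N^{(k)}$. Therefore $(t - \ell(t)^{q^k})^n$ annihilates this quotient, yielding $(t - \ell(t)^{q^k})^n \cdot \tau^k\mot \subseteq \tau^{k+1}\mot$ for every $k \geq 0$. Setting $f = t - \ell(t) \in \LA$ (whose $\tau$-twists are $\tau^k(f) = t - \ell(t)^{q^k}$) and letting $F$ be the multiplicative set they generate, iterating the above inclusion gives $\mot[F^{-1}] = \tau^k \mot[F^{-1}]$ for all $k$.

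For part (2), consider the $\LA$-submodule $M_0 = \LA \cdot \{e_1, \ldots, e_d\}$ and the quotient $Q = \mot/M_0$; it suffices to show $Q[F^{-1}] = 0$, which gives $\mot[F^{-1}] = M_0[F^{-1}]$ as a finitely generated $\LA[F^{-1}]$-module. Using the Frobenius-twist identity $f^{(k)}\tau^k m = \tau^k(f m)$ valid for $f \in \LA$ and $m \in \mot$, I can reduce by induction on the $\tau$-degree to showing $\tau e_l \in M_0[F^{-1}]$ for each $l$. This in turn combines the basic relations $\sum_{l \geq 1, j} (A_l)_{ij} \tau^l e_j = t e_i - \sum_j (A_0)_{ij} e_j \in M_0$ with iterated use of the filtration bound from the previous paragraph, to invert (inside $F$) the obstructions arising from possible non-invertibility of the leading coefficient $A_s$. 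Part (1) follows immediately: localizing $\mot[F^{-1}]$ further to $\Quot(\LA)$ yields a finite-dimensional $\Quot(\LA)$-vector space, as it is a quotient of a finitely generated module.

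For part (3), with $\mot[F^{-1}]$ finitely generated over the Noetherian integral domain $\LA[F^{-1}]$ and of generic rank $r = \rk(\mot)$ from part (1), I would select $r$ elements of $\mot$ forming a $\Quot(\LA)$-basis of $\mot \otimes \Quot(\LA)$ and let $f'$ be the determinant of a maximal minor of the matrix expressing them in terms of a generating set; replacing $f$ with $f \cdot f'$ and closing under $\tau$-twists (to preserve the required structure of $F$) makes $\mot[F^{-1}]$ free of rank $r$. The main obstacle is the induction step in the proof of part (2): in the abelian case the invertibility of $A_s$ immediately gives $\tau e_l \in M_0$ without any localization, but in the non-abelian setting one must combine the relations across multiple $\tau$-levels using $(t - \ell(t)^{q^k})^n \tau^k \mot \subseteq \tau^{k+1}\mot$ to produce, after $F$-localization, a valid expression for each $\tau e_l$ inside $M_0[F^{-1}]$.
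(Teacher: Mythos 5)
Your reduction to $L[t]$ and the overall strategy (find relations forcing $\tau^k e_j$ into the span of lower-degree elements after inverting one element and its twists) match the paper's in outline, but the core of your argument has a genuine gap, and the specific element you propose to invert is wrong. The filtration inclusion $(t-\ell(t)^{q^k})^n\cdot\tau^k\mot\subseteq\tau^{k+1}\mot$ is correct (modulo the point that for non-perfect $L$ you must take the $L$-span of $\tau^k\mot$ to get an $\LA$-submodule), but it pushes in the \emph{wrong direction}: it expresses a unit multiple (after localizing at $F$) of a degree-$k$ element in terms of \emph{higher}-degree elements, i.e.\ it yields $\mot[F^{-1}]=(\tau^k\mot)[F^{-1}]$, which is a statement of infinite $\tau$-divisibility up to units and is of no help in proving finite generation over $\LA[F^{-1}]$ from $M_0$. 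The step where you ``combine the relations across multiple $\tau$-levels to invert the obstructions arising from non-invertibility of $A_s$'' is precisely where the entire difficulty lies, and no mechanism is given. Concretely, your choice $f=t-\ell(t)$ fails for the paper's own Example \ref{exmp:t-module-with-t-division}: there $t\cdot\tau\kappa_1=(\theta^q-t)(\theta-t)\kappa_1$, so $\tau^k\kappa_1=g_k(t)\,t^{-k}\kappa_1$ with $g_k$ a product of twists $t-\theta^{q^j}$ (hence a unit in your localization), and $\mot[F^{-1}]\supseteq\bigcup_k L[t][F^{-1}]\,t^{-k}\kappa_1$ is \emph{not} finitely generated over $L[t][F^{-1}]$ when $F$ is generated by the twists of $t-\theta$; one genuinely has to invert $t$, which does not lie in your multiplicative set.

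The paper handles the non-invertible leading coefficient by a different and essential device: pass to $L^{\per}(t)\{\tau^{-1}\}$, use the left/right division algorithms to diagonalize $\tau^{-s}(D-t\one_d)$ (as in Anderson's Proposition 1.4.2), and thereby produce a left multiplier $D'$ so that $D'(D-t\one_d)$ has invertible top $\tau$-coefficient; the element $f$ is then the numerator of $\det$ of the resulting diagonal matrix together with the denominators of $D'$ (in Example \ref{exmp:t-module-with-t-division} this gives $f=t$). Some replacement for this step is unavoidable in your write-up. A secondary gap: in part (3), inverting the determinant of a maximal minor does not dispose of torsion in $\mot[F^{-1}]$; the paper first inverts annihilators of a finite generating set of the torsion submodule (finitely generated since $\locLA$ is Noetherian) and only then invokes the Dedekind property to pass from locally free to free.
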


\begin{defn}
We call the dimension of $\Quot(\LA)\otimes_{\LA}\mot$ as $\Quot(\LA)$-vector space (which is finite by the previous theorem) the \emph{rank of the $A$-motive $\mot$}, and denote it by $\rk(\mot)$.
\end{defn}

\begin{rem}
The previous theorem ensures that $\rk(\mot)$ is a natural number also in the non-abelian case. We will see in the Section \ref{subsec:equality-of-ranks} that for almost all primes $\pfrak$ of $A$, we even have $\rk(\mot)=\trk{\pfrak}(E)$. This is in accordance to the abelian case where this equality holds for all primes $\pfrak$.
\end{rem}

The proof of the first two parts is an adaption of ideas in \cite[Sect.~6]{am:aefam}.

\begin{proof}[Proof of Theorem \ref{thm:rational-A-motive-f-dim}]
Let $t\in A$, then $L[t]=\Fq[t]\otimes L\subseteq \LA$ is a finite ring extension, and $L(t)\subseteq \Quot(\LA)$ is a finite field extension. 
Hence, $\mot_{L(t)}:=L(t)\otimes_{L[t]} \mot=\Quot(\LA)\otimes_{\LA} \mot$, and for \eqref{item:1} and \eqref{item:2}, it suffices to show finite generation of $\Quot(\LA)\otimes_{\LA} \mot$ as a module over $L(t)$ and over $\locLt$ for a multiplicative subset $F$ generated by some $f\in L[t]$ and its $\tau$-twists.
We start with the proof of part \eqref{item:1}.
\begin{enumerate}
\item With respect to some $L\{\tau\}$-basis $\kappa_1,\ldots,\kappa_d$ of $\mot$, we can write
\[ t\cdot \svect{\kappa}{d} = D\cdot \svect{\kappa}{d} \]
for some $D\in \Mat_{d\times d}(L\{\tau\})$. Hence, the matrix $D-t\cdot \one_d\in L[t]\{\tau\}$ annihilates the $L\{\tau\}$-basis $\kappa_1,\ldots,\kappa_d$.

Let $s\in\NN$ be the maximum $\tau$-degree of entries of $D$.
If the matrix $D_s$ given by the coefficients of $\tau^s$ is invertible over $L(t)$, we are done, as this implies that we can write
\[ \tau^s \svect{\kappa}{d}=-D_s^{-1}\cdot \left( D-t\cdot \one_d-D_s\tau^s\right)\cdot \svect{\kappa}{d}, \]
and the $\tau$-degree of $D-t\cdot \one_d-D_s\tau^s$ is at most $s-1$.
Therefore all $\tau^s\kappa_j$ are $L(t)$-linear combinations of the $\tau^i\kappa_j$ with $i<s$ and $j=1,\ldots d$, and by twisting the equation by powers of $\tau$, we obtain, that all $\tau^k\kappa_j$
with $k\geq s$ are in the $L(t)$-span of the $\tau^i\kappa_j$ with $i<s$ and $j=1,\ldots d$. In particular, $\mot_{L(t)}$ is finitely generated as $L(t)$-module.\footnote{Actually, unless the $t$-module $E$ wasn't a product of $\GG_a$'s with trivial $t$-action, in the considered case, we would even have $D_s\in \Mat(L)$ for which it is known that the motive $\mot$ is abelian.}

If $D_s$ is not invertible, we are going to find a matrix $D'\in \Mat_{d\times d}(L(t)\{\tau\})$ such that 
$D'\cdot (D-t\one_d)$ has the property that its top $\tau$-coefficient matrix is invertible over $L(t)$.
Then we can conclude the finite generation as in the special case.

Let $L^{\per}=\bigcup_{n\geq 0} L^{1/p^n}\subseteq \bar{L}$ be the perfect closure of $L$.

Consider the matrix $\tau^{-s}\cdot (D-t\cdot \one_d)\in \Mat_{d\times d}(L^{\per}(t)\{\tau^{-1}\})$. As $L^{\per}(t)\{\tau^{-1}\}$ is a skew-polynomial ring whose endomorphism is even an automorphism, there are left and right division algorithms in this ring. Therefore, as in \cite{ga:tm}, Proposition~1.4.2, there are matrices $B,C\in \GL_{d}(L^{\per}(t)\{\tau^{-1}\})$ such that $B\tau^{-s}\cdot (D-t\cdot \one_d)C$ is a diagonal matrix. Multiplying from the left with a diagonal matrix $T$ whose diagonal entries are appropriate powers of $\tau$, we get
\[ TB\tau^{-s}\cdot (D-t\cdot \one_d)C \equiv S \mod \tau^{-1}\] with $S\in \GL_d(L^{\per}(t))$ (of course a diagonal matrix). Hence,
\[ CTB\tau^{-s}\cdot (D-t\cdot \one_d) \equiv CSC^{-1}\equiv S' \mod \tau^{-1}\] with $S'\in \GL_d(L^{\per}(t))$.

Finally, there is $r\in \NN$ such that $D':=\tau^{r}CTB\tau^{-s}\in \Mat_{d\times d}(L(t)\{\tau\})$, i.e.~that no negative $\tau$-powers remain, and that all $\tau$-coefficients are in $L(t)$.
By construction, this $D'$ satisfies the desired property, as the leading coefficient matrix is $\tau^r(S')\in \GL_d(L^{\per}(t))\cap \Mat_{d\times d}(L(t))=\GL_d(L(t))$.
\item By the proof of the first part, we see that one only has to invert the numerator of $\det(\tau^r(S'))=\det(\tau^r(S))\in L(t)$, as well as all (finitely many) denominators occuring in $D'$, resp.~the product $f$ of all these, to obtain $\tau^r\kappa_j$ ($j=1,\ldots,d$) as $L[t][{f}^{-1}]$-linear combination of the $\tau^i\kappa_l$ ($l=1,\ldots,d$,$0\leq i<r$). The higher $\tau$-images of the $\kappa_j$ are obtained if we also invert the higher $\tau$-twists of $f$.
\item 
First take $f$ and $F$ as in \ref{item:2}.
As $\mot_{\locLA}$ is finitely generated, and $\locLA$ is a Noetherian ring, also the torsion submodule $T$ is finitely generated. In particular, there are generators $m_1,\ldots, m_k\in T$ of $T$, and $f_1,\ldots, f_k\in \LA$ such that $f_i\cdot m_i=0$ for all $i$.
Replacing $f$ by $f\cdot f_1\cdots f_k$, the torsion submodule $T$ is killed in the corresponding new localization $\mot_{F^{-1}(\LA)}$.
Hence, the new localization $\mot_{\locLA}$ is torsion-free. As $\locLA$ is even a Dedekind domain, the torsion-free module  $\mot_{\locLA}$ is locally free. Hence, there exist $g\in \LA$ such that $\mot_{\locLAGtilde}$ is a free $\locLAGtilde$-module, where $\tilde{G}$ is the multiplicative subset generated by $F$ and $g$. Therefore, taking $G$ to be the multiplicative subset generated by $g\cdot f$ and all its $\tau$-twists, $\locLAG$ is a localization of $\locLAGtilde$, and 
 also $\mot_{\locLAG}$ is a free $\locLAG$-module.
\end{enumerate}
\end{proof}

\begin{exmp}\label{exmp:t-module-with-t-theta-torsion}
Quasi-periodic extensions of uniformizable abelian $t$-modules as defined in \cite[Section 3.3]{db-mp:ligvpc}, are examples of uniformizable non-abelian $t$-modules where one can take $f=t-\ell(t)$. In particular, their $t$-motives are free at all primes $\pfrak\lhd A$.
\end{exmp}

\begin{exmp}\label{exmp:t-module-with-t-division}
Let $(E,\phi)$ be the two-dimensional $t$-module over $\Fq(\theta)$ given by
\[ \phi_t=\begin{pmatrix} \theta & 0 \\ \theta & \theta \end{pmatrix}+
\begin{pmatrix} 1 & 1 \\ 0 &0\end{pmatrix}\cdot \tau, \]
where $\theta=\ell(t)$.
So, we have to diagonalize the matrix 
\[\tau^{-1}(D-t\cdot \one_2)=\begin{pmatrix} \tau^{-1}\cdot (\theta-t)+1 & 1 \\ \tau^{-1}\cdot \theta & \tau^{-1}\cdot (\theta-t)
\end{pmatrix}\in \Mat_{2\times 2}(\Fq(\theta)(t)\{\tau^{-1}\})\]
by elementary row and column operations. As the upper right entry is $1$, this is quite easy and one obtains for example the matrix
\[\begin{pmatrix} 1 & 0 \\ 
0 & -\tau^{-2}\cdot (\theta^q-t)(\theta-t)+\tau^{-1}t \end{pmatrix}\]
So choosing $T=\left( \begin{smallmatrix}1 & 0 \\ 0 & \tau \end{smallmatrix}\right)$, one gets
\[ T\cdot \begin{pmatrix} 1 & 0 \\ 
0 & -\tau^{-2}\cdot (\theta^q-t)(\theta-t)+\tau^{-1}t \end{pmatrix} \equiv  \begin{pmatrix} 1 & 0 \\ 
0 & t \end{pmatrix}=:S \mod \tau^{-1}.\]

So the theorem asserts that the $t$-motive of $E$ becomes finitely generated after inverting $\det(\tau^r(S))=t$.
Pursuing the calculations of the proof further, one obtains the relation
\[ \begin{pmatrix} -\tau\cdot (\theta^q-t)(\theta-t)+\tau^2\cdot t & 0 \\ 
(\theta^q-t)^2(\theta-t)+\tau\cdot (\theta-t)(1-2t+\theta^q)+\tau^2\cdot (1-t) & 1 
\end{pmatrix} \begin{pmatrix} \kappa_1 \\ \kappa_2\end{pmatrix} =  \begin{pmatrix} 0 \\ 0\end{pmatrix}.
\]
As $\tau$ acts torsionfree, the first equation can be divided from the left by $\tau$, so we end up with the relations:
\begin{eqnarray*}
t\cdot \tau\kappa_1 &=& (\theta^q-t)(\theta-t)\kappa_1 \\
\kappa_2 &=& -\left( (\theta^q-t)^2(\theta-t)+\tau\cdot (\theta-t)(1-2t+\theta^q)+\tau^2\cdot (1-t)\right)\kappa_1
\end{eqnarray*}
The second equation could be further reduced using the first, but we already see that $\mot$ is not finitely generated as $\Fq(\theta)[t]$-module, but we have infinite $t$-division. We also see that $\Fq(\theta)[t,\frac{1}{t}]\otimes_{\Fq(\theta)[t]} \mot(E)$ is free as $\Fq(\theta)[t,\frac{1}{t}]$-module of rank $1$ with basis $\{\kappa_1\}$.\\
This also shows that $\rk(\mot)=1$.
\end{exmp}

\medskip

A corresponding statement as the one in Theorem \ref{thm:rational-A-motive-f-dim} holds for the dual $A$-motive $\dumot=\Hom_{\vs}(\GG_{a,L},E)$ as well.\\
Although we don't deal with the dual $A$-motive in the next sections, we state the theorem here for sake of completeness.
As the dual $A$-motive is usually only defined for perfect fields $L$, we restrict to this case.
Since, the places of $E$ and $\GG_{a,L}$ are switched compared to the motive $\mot$, the dual motive $\dumot$ is not a left-$L\{\tau\}$-module, but a right-$L\{\tau\}$-module, and usually considered as a left module for the opposite ring $L\{\sigma\}=L\{\tau\}^*$ where
$\sigma=\tau^{-1}$ (see e.g.~\cite{db-mp:ridmtt}).

\begin{thm}\label{thm:rational-dual-A-motive-f-dim}
Let $L$ be perfect. Denote by $\Quot(\LA)$ the field of fractions of $\LA$.
The dual $A$-motive $\dumot:=\dumot(E)$ associated to $E$ has the following properties as $\LA$-module:
\begin{enumerate}
\item $\Quot(\LA)\otimes_{\LA}\dumot $ is a finite dimensional $\Quot(\LA)$-vector space. 
\item There is an element $f\in \LA$ such that one has:
Let $F$ be the multiplicative subset generated by $f$ and all its images under (iterated) $\sigma$-twists, then $\locLA\otimes_{\LA} \dumot$ is finitely generated as $\locLA$-module.
\item The element $f\in \LA$ and the corresponding set $F$ as in \eqref{item:2}, can be chosen such that $\locLA\otimes_{\LA} \dumot$ is a free module.
\end{enumerate}
\end{thm}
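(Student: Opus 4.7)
The plan is to mirror the proof of Theorem \ref{thm:rational-A-motive-f-dim}, replacing $\tau$ throughout by $\sigma$. The crucial simplification compared to the original case is that $L$ is now assumed perfect, so $\sigma$ is already an automorphism of $L$; consequently the detour through the perfect closure $L^{\per}$ that was needed for the $A$-motive is unnecessary here, and all skew-polynomial manipulations can be carried out directly over $L(t)\{\sigma\}$.

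First I would fix some $t \in A \setminus \Fq$, observe that $L[t] \hookrightarrow \LA$ is a finite ring extension and $L(t) \hookrightarrow \Quot(\LA)$ is a finite field extension, and thereby reduce parts (1) and (2) to the corresponding statements with $\Quot(\LA)$ replaced by $L(t)$ and $\locLA$ replaced by $\locLt$. Since $E \cong \GG_{a,L}^d$ as $\Fq$-vector space schemes, the dual motive $\dumot = \Hom_{\vs}(\GG_{a,L}, E)$ is free of rank $d$ as a left $L\{\sigma\}$-module. Picking a basis $\kappa_1, \ldots, \kappa_d$, the $t$-action yields a matrix $D \in \Mat_{d \times d}(L\{\sigma\})$ with $t \svect{\kappa}{d} = D \svect{\kappa}{d}$, so that $(D - t \one_d)\svect{\kappa}{d} = 0$ in $\dumot$.

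The key step is to replace $D - t \one_d$ by a matrix with invertible leading $\sigma$-coefficient in $L(t)$. Because $L$ is perfect, $L(t)\{\sigma\}$ is a skew polynomial ring whose coefficient endomorphism is an automorphism, so it admits left and right Euclidean division. By a Smith-normal-form argument in this ring (precisely analogous to Anderson's \cite[Proposition~1.4.2]{ga:tm} invoked in the proof of Theorem \ref{thm:rational-A-motive-f-dim}), I can find $B, C \in \GL_d(L(t)\{\sigma\})$ such that $B(D - t \one_d) C$ is diagonal with non-zero diagonal entries. Multiplying from the left by an appropriate diagonal matrix of powers of $\sigma$ (and clearing any negative $\sigma$-powers, which is harmless here since $\sigma$ is an automorphism of $L$), I obtain a matrix $D'' \in \Mat_{d \times d}(L(t)\{\sigma\})$ annihilating $\svect{\kappa}{d}$ whose leading $\sigma$-coefficient lies in $\GL_d(L(t))$. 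This allows me to express $\sigma^s \kappa_j$ as an $L(t)$-linear combination of lower $\sigma$-powers of the $\kappa_j$'s; iterating under further $\sigma$-twists gives that $\dumot \otimes_{L[t]} L(t)$ is finitely generated over $L(t)$, proving (1). Taking $f \in L[t]$ to be the product of all denominators entering $D''$ and the numerator of the determinant of the leading coefficient, and letting $F$ be the multiplicative set generated by $f$ and all its $\sigma$-twists, the same relations yield finite generation of $\dumot_{\locLt}$ over $\locLt$, hence (2).

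For part (3) I would argue exactly as in Theorem \ref{thm:rational-A-motive-f-dim}: the torsion submodule $T \subseteq \dumot_{\locLA}$ is finitely generated since $\locLA$ is Noetherian, so enlarging $f$ by the product of annihilators of a finite generating set of $T$ kills $T$ in the new localization, leaving a torsion-free module over the Dedekind domain $\locLA$. This module is locally free, hence free after a further localization at some $g \in \LA$; taking the new $f$ to be $g \cdot f$ and closing up $F$ under $\sigma$-twists produces the desired free module. The main potential obstacle I anticipate is a careful bookkeeping of the left/right $L\{\sigma\}$-module structure in the Smith-normal-form step — one must take care that the matrix operations used to diagonalize $D - t \one_d$ act compatibly with the action of $t$ on the basis — but since $\sigma$ is a true automorphism of $L$ this is strictly easier than in the $\tau$-case and should pose no genuine difficulty.
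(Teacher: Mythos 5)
Your overall route is the paper's own: the paper proves this theorem by regarding $\dumot$ as a \emph{right} $L\{\tau\}$-module, writing the $t$-action as right multiplication by a matrix $D\in\Mat_{d\times d}(L\{\tau\})$, and rerunning the proof of Theorem \ref{thm:rational-A-motive-f-dim} ``with sides swapped''; your left-$L\{\sigma\}$-module formulation is the same argument transported through the opposite ring. Your observation that perfectness of $L$ makes the twisting endomorphism an automorphism, so that no passage to $L^{\per}$ is needed, is correct and is precisely why the paper's sketch goes through. Parts (2) and (3) are then handled exactly as in the paper.

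However, your execution of the key diagonalization step has a genuine gap. You diagonalize $D-t\one_d$ in $L(t)\{\sigma\}$ as $B(D-t\one_d)C$ and claim that, after left multiplication by a diagonal matrix of $\sigma$-powers, you obtain a matrix $D''$ \emph{annihilating} $\svect{\kappa}{d}$ with invertible \emph{leading} $\sigma$-coefficient. Two problems: first, $B(D-t\one_d)C$ does not annihilate $\svect{\kappa}{d}$ (only left multiples of $D-t\one_d$ do), so one must pass to $CTB(D-t\one_d)=C\bigl(TB(D-t\one_d)C\bigr)C^{-1}$; second, the leading $\sigma$-coefficient is not preserved under conjugation by elements of $\GL_d(L(t)\{\sigma\})$ --- the top-degree coefficient matrix of $C$ (or of $C^{-1}$) need not be invertible, so the top coefficient of the conjugate is uncontrolled. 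The reason the paper's argument for $\mot$ works is that it controls the \emph{constant} term modulo the skew variable: reduction mod $\tau^{-1}$ is a ring homomorphism $L^{\per}(t)\{\tau^{-1}\}\to L^{\per}(t)$ and hence commutes with conjugation; there is no such homomorphism extracting the top coefficient. The correct mirror in your setting is to pre-multiply by $\sigma^{-s}$, carry out the Smith-normal-form argument in $L(t)\{\sigma^{-1}\}=L(t)\{\tau\}$ (legitimate since $L$ is perfect), normalize the constant term mod $\sigma^{-1}$, conjugate, and only then clear the negative $\sigma$-powers. Alternatively, you may keep the diagonalization in $L(t)\{\sigma\}$ but must then replace the generators by $C^{-1}\svect{\kappa}{d}$, for which the diagonal relations hold literally and each one reduces the top $\sigma$-power of the corresponding generator after inverting its leading coefficient together with the $\sigma$-twists thereof. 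Either fix is routine, but the step as you wrote it would fail.
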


\begin{proof}[Sketch of proof]
We consider the dual $A$-motive not as a left module over the opposite ring of $L\{\tau\}$ (as it is usually done), but as a right-$L\{\tau\}$-module. Then with respect to some $L\{\tau\}$-basis $\dual{\kappa}_1,\ldots,\dual{\kappa}_d$ of $\dumot$, we can write
\[\begin{pmatrix} \dual{\kappa}_1& \dual{\kappa}_2 & \cdots & \dual{\kappa}_d\end{pmatrix}\cdot t = \begin{pmatrix} \dual{\kappa}_1& \dual{\kappa}_2 & \cdots & \dual{\kappa}_d\end{pmatrix}\cdot D\]
for some $D\in \Mat_{d\times d}(L\{\tau\})$. Hence, the matrix $D-t\cdot \one_d\in L[t]\{\tau\}$ annihilates the $L\{\tau\}$-basis $\dual{\kappa}_1,\ldots,\dual{\kappa}_d$.
The first part is then obtained in the same way as for Theorem \ref{thm:rational-A-motive-f-dim}\eqref{item:1}, but with sides swapped, and one gets that for some $r\geq 0$ all $\dual{\kappa}_j\tau^r$ ($j=1,\ldots, d$) are $L[t][f^{-1}]$-linear combinations of the $\dual{\kappa}_l\tau^i$ ($l=1,\ldots,d$,$0\leq i<r$).\footnote{More details about the swap from left to right can be found in \cite[Sect.~6]{am:aefam}.} The higher $\tau$-images of the $\dual{\kappa}_j$, i.e.~the $\dual{\kappa}_j\tau^i$ with $i>r$ are obtained if we also invert the higher $\tau^{-1}$-twists of $f$.\\
The third part is deduced from the second part in the same way as in the previous theorem.
\end{proof}

\subsection{Equality of ranks}\label{subsec:equality-of-ranks}

The goal of this subsection is to show the following theorem.
\begin{thm}\label{thm:equality-of-ranks}
Let $E$ be an Anderson $A$-module over $L$ and $\mot$ its $A$-motive. Then for almost all primes $\pfrak$ of $A$, we have
\[ \trk{\pfrak}(E)=\rk(\mot). \]
If $E$ is uniformizable, this equality holds for all primes $\pfrak$ of $A$.
\end{thm}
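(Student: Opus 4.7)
The key identification for each prime $\pfrak\lhd A$ is $\hat{H}_\pfrak\cong\Hom_{\LA}^\tau(\mot,(\CA)_\pfrak)$ from Theorem~\ref{thm:h-isom-M-tate-dual}, together with $\trk{\pfrak}(E)=\rk_{A_\pfrak}(\hat{H}_\pfrak)$ from Remark~\ref{rem:p-rk-rk-Hp}. Theorem~\ref{thm:rational-A-motive-f-dim}(3) supplies $f\in \LA$ such that $\mot_F=(\locLA)\otimes_{\LA}\mot$ is free of rank $r:=\rk(\mot)$; by enlarging $F$ further to include $\det(\Phi)$ and its $\tau$-twists (where $\Phi$ is the matrix of $\tau$ on a chosen $(\locLA)$-basis $\kappa_1,\dots,\kappa_r$), one may assume $\Phi\in \GL_r(\locLA)$, making $\mot_F$ dualizable. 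Whenever every element of $F$ is a unit in $(\CA)_\pfrak$, tensor--hom adjunction and $\tau$-equivariance give
\[ \hat H_\pfrak\ \cong\ \Hom_{\locLA}^\tau(\mot_F,(\CA)_\pfrak)\ \cong\ \{\,y\in (\CA)_\pfrak^{\,r}\mid \tau(y)=\Phi y\,\}, \]
reducing the problem to counting solutions of this difference equation.

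\textbf{The bad primes form a finite set.} Viewing $f$ as a regular function on the smooth $L$-curve $\Spec(\LA)$, its vanishing locus $V(f)$ is $0$-dimensional, hence a finite set of closed points; projecting along $\Spec(\LA)\to\Spec(A)$, its image is a finite set $S$ of non-zero primes of $A$. For $\pfrak\notin S$ the image of $f$ in $L\otimes\Fpp$ is a unit, and therefore so is its image in $(\CA)_\pfrak\cong(\Fpp\otimes\CI)\ps{\up{}}$. A direct computation shows that the vanishing locus of $\tau^n(f)$ on geometric points of $\Spec(\CA)$ is obtained from $V(f)$ by raising the underlying $\Fq$-algebra homomorphisms $A\to\CI$ to the $q^n$-th power; since Frobenius is injective on $\CI$, the kernels (i.e.\ the primes of $A$) are unchanged, and $V(\tau^n(f))$ projects to the same $S$. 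Consequently, for every $\pfrak\notin S$, each element of $F$ is invertible in $(\CA)_\pfrak$, and the identification above applies.

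\textbf{Counting solutions via Lang and $\up{}$-adic approximation.} Using $(\CA)_\pfrak\cong(\Fpp\otimes\CI)\ps{\up{}}\cong\prod_{\sigma}\CI\ps{\up{}}$ (product over the $\degp$ embeddings $\sigma\colon\Fpp\hookrightarrow\CI$), $\tau$ cyclically permutes the factors via $\sigma\mapsto\mathrm{Frob}_{\CI}\circ\sigma$ while raising each factor's coefficients to the $q$-th power. Reducing $\tau(y)=\Phi(\pfrak)y$ modulo $\pfrak$ and cycling through the $\degp$ factors yields a single equation $\mathrm{Frob}^{\degp}(y_0)=By_0$ with $B\in\GL_r(\CI)$; since $\CI$ is algebraically closed, Lang's theorem gives an $r$-dimensional $\Fpp$-solution space, and its inhomogeneous variant $\mathrm{Frob}^{\degp}(y_0)=By_0+d$ is always solvable. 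The same inhomogeneous solvability drives the $\up{}$-adic lifting: given $y_n$ solving the equation mod $\pfrak^{n+1}$, any lift $\tilde y_n$ has defect $\tau(\tilde y_n)-\Phi\tilde y_n=\up{n+1}c$ with $c\in(\Fpp\otimes\CI)^r$, and choosing $z\in(\Fpp\otimes\CI)^r$ with $\tau(z)-\Phi(\pfrak)z=-c$ gives the next approximation $y_{n+1}=\tilde y_n+\up{n+1}z$. The $\up{}$-adic limit lifts every mod-$\pfrak$ solution to $(\CA)_\pfrak^{\,r}$, so the reduction map $\hat H_\pfrak/\pfrak\hat H_\pfrak\to\{\text{solutions mod }\pfrak\}$ is surjective. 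Injectivity follows since $\up{}$ is a non-zero-divisor in $(\CA)_\pfrak$: a mod-$\pfrak$-trivial element divides by $\up{}$ uniquely, the quotient still satisfies $\tau(y)=\Phi y$, and thus the original element lies in $\up{}\hat H_\pfrak=\pfrak\hat H_\pfrak$. Hence $\dim_{\Fpp}(\hat H_\pfrak/\pfrak\hat H_\pfrak)=r$, and Nakayama for the complete DVR $A_\pfrak$ combined with $A_\pfrak$-torsion-freeness of $\hat H_\pfrak\subseteq(\CA)_\pfrak^{\,r}$ forces $\hat H_\pfrak$ to be $A_\pfrak$-free of rank $r$, whence $\trk{\pfrak}(E)=r=\rk(\mot)$ for all $\pfrak\notin S$.

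\textbf{The uniformizable case and the main obstacle.} If $E$ is uniformizable, Corollary~\ref{cor:uniformizable-implies-constant-prank} asserts that $\trk{\pfrak}(E)$ is independent of $\pfrak$; combined with the previous paragraph this forces the common value to equal $\rk(\mot)$ for every prime $\pfrak$. The main technical obstacle is the $\up{}$-adic lifting in the third step: because the residue ring $\Fpp\otimes\CI$ is not a field when $\degp>1$, the $\tau$-action is not componentwise Frobenius, and one must carefully unwind this twist through the product decomposition in order to reduce the solvability of each Artin--Schreier equation to a single Lang surjectivity statement over the algebraically closed field $\CI$.
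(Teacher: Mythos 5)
Your argument is correct in outline and, despite the different packaging, runs on the same engine as the paper's proof. The paper also first invokes Theorem \ref{thm:rational-A-motive-f-dim}(3) to get freeness of $\mot_{(\pfrak)}$ away from a finite set of primes (Proposition \ref{prop:free-almost-everywhere}), and then proves $\rk_{(\LA)_{(\pfrak)}}(\mot_{(\pfrak)})=\trk{\pfrak}(E)$ (Proposition \ref{prop:local-rank}) by exactly the two analytic inputs you use: a Lang-isogeny argument modulo $\pfrak$ and a $\up{}$-adic successive approximation (Lemma \ref{lem:iso-mod-pfrak}). The difference is only where the computation is performed: the paper produces a $\tau$-invariant basis of the completed motive $\tilde{\mot}_\pfrak$ and then identifies $\hat{H}_\pfrak$ with $\Hom_{A_\pfrak}\bigl((\tilde{\mot}_\pfrak)^\tau,A_\pfrak\bigr)$ through a chain of tensor--hom adjunctions and the dualizable-difference-module formalism, whereas you dualize immediately and count solutions of $\tau(y)=\Phi y$ in $(\CA)_\pfrak^{\,r}$ directly. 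Your version is slightly more hands-on (you must unwind the cyclic permutation of the factors of $\Fpp\otimes\CI\cong\prod_\sigma\CI$ yourself, which you correctly identify as the delicate point), while the paper's version isolates the Lang/approximation step once and reuses the resulting invariant basis elsewhere (e.g.\ in Section \ref{sec:rigid-analytic-trivializations}). The treatment of the uniformizable case via Corollary \ref{cor:uniformizable-implies-constant-prank} is identical.

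One step needs justification: you enlarge $F$ by $\det(\Phi)$ and its $\tau$-twists to force $\Phi\in\GL_r(\locLA)$, but for the resulting exceptional set $S$ to be finite you must know $\det(\Phi)\neq 0$, which you never verify; if $\det(\Phi)=0$ the whole construction collapses. This is precisely where generic characteristic enters. The paper supplies the fact in Lemma \ref{lem:iso-mod-pfrak}\eqref{item:bar-tau-iso}: with $J\lhd\CA$ the ideal generated by the elements $a\otimes 1-1\otimes\ell(a)$, the nilpotency condition on $\dphi_a-\ell(a)$ gives $J^d\tilde{\mot}\subseteq\tau(\tilde{\mot})$, so $\tau(\mot_F)$ generates $\mot_F$ after inverting $J$; hence $\det(\Phi)\neq 0$, and in fact $\Phi(\pfrak)$ is invertible at \emph{every} prime where the motive is free, since $(\pfrak)$ is coprime to $J^d$. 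With that point added your proof is complete.
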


Recall from Definition \ref{def:localization}, that for a prime ideal $\pfrak$ of $A$, we set
\[  (\LA)_{(\pfrak)}:= \bigcap_{\qfrak \mid (\LA)\pfrak} (\LA)_{(\qfrak)} \subseteq \Quot(\LA)\]
to be the intersection of the localizations of $\LA$ at all prime ideals $\qfrak$ dividing the extended ideal $(\LA)\pfrak$.
As there are only finitely many such prime ideals $\qfrak$, this ring is a semi-local ring.

\begin{defn}
For a prime $\pfrak$ of $A$, the $A$-motive $\mot=\mot(E)$ is called \emph{free at $\pfrak$}, if the localization
\[ \mot_{(\pfrak)}:=(\LA)_{(\pfrak)}\otimes_{\LA} \mot\]
is a free and finitely generated $(\LA)_{(\pfrak)}$-module.
\end{defn}

\begin{rem}
Given an abelian Anderson module $E$, its $A$-motive $\mot=\mot(E)$ is a finitely generated locally free $\LA$-module. Hence, for all primes $\pfrak$ of $A$, the localization $\mot_{(\pfrak)}$ is locally free. Since $(\LA)_{(\pfrak)}$ is semi-local, this implies that $\mot_{(\pfrak)}$ is free. Therefore, for $A$-motives of abelian Anderson modules,
the condition of being free at $\pfrak$ is satisfied for all primes $\pfrak$ of $A$.
\end{rem}

For non-abelian Anderson modules, we have the following.

\begin{prop}\label{prop:free-almost-everywhere}
Let $E$ be a (non-abelian) Anderson module. For almost all primes $\pfrak$ of $A$, the $A$-motive $\mot(E)$  is free at $\pfrak$.
\end{prop}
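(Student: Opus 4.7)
The plan is to leverage Theorem~\ref{thm:rational-A-motive-f-dim}\eqref{item:3}, which supplies a nonzero element $f\in\LA$ such that the localization $(\locLA)\otimes_{\LA}\mot$ is a free $(\locLA)$-module of finite rank, where $F$ is the multiplicative subset of $\LA$ generated by $f$ and its iterated $\tau$-twists. I will show that for all but finitely many primes $\pfrak$ of $A$, every element of $F$ is already a unit in $(\LA)_{(\pfrak)}$, so that $\mot_{(\pfrak)}$ becomes a further localization of $(\locLA)\otimes_{\LA}\mot$ and inherits freeness and finite generation.

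First, I would use that $\LA=L\otimes_{\Fq}A$ is a one-dimensional Noetherian integral domain (the curve $C$ being geometrically irreducible; the one-dimensionality is already implicit in the proof of Theorem~\ref{thm:rational-A-motive-f-dim}\eqref{item:3}, where a suitable localization of $\LA$ is noted to be Dedekind). Consequently, the vanishing locus of $f$ in $\Spec(\LA)$ is a finite set of closed points, and its image under the map $\pi\colon\Spec(\LA)\to\Spec(A)$ induced by $a\mapsto 1\otimes a$ is a finite subset of $\Spec(A)$. Let $S$ denote the finite set of nonzero primes of $A$ that arise in this image; this will be the exceptional set.

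Next, I would fix a nonzero prime $\pfrak\notin S$ and analyze the quotient $\LA/\pfrak\LA\cong L\otimes_{\Fq}\Fpp$. Since $\Fpp/\Fq$ is a finite separable extension, this quotient is an \'etale $L$-algebra, hence a finite product of fields; its prime ideals are precisely those primes of $\LA$ lying over $\pfrak$. By the choice of $S$, no such prime contains $f$, so the reduction $\bar f$ is nonzero in every field component, i.e.\ a unit in $\LA/\pfrak\LA$. The key point is that $\tau$ descends to a ring endomorphism of $\LA/\pfrak\LA$ (the ideal $\pfrak\LA$ is $\tau$-stable because $\tau$ fixes $A$ pointwise), and any ring endomorphism sends units to units. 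Therefore every $\tau^k(f)$ also reduces to a unit in $\LA/\pfrak\LA$, hence lies outside every prime of $\LA$ over $\pfrak$, and so is a unit in $(\LA)_{(\pfrak)}$. This yields a chain of inclusions $\LA\subseteq\locLA\subseteq(\LA)_{(\pfrak)}$, and tensoring gives
\[\mot_{(\pfrak)}=(\LA)_{(\pfrak)}\otimes_{\LA}\mot\cong(\LA)_{(\pfrak)}\otimes_{\locLA}\bigl((\locLA)\otimes_{\LA}\mot\bigr),\]
which is free of finite rank over $(\LA)_{(\pfrak)}$ because $(\locLA)\otimes_{\LA}\mot$ is free of finite rank over $\locLA$.

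The anticipated obstacle is that $F$ has infinitely many generators $\{\tau^k(f)\mid k\geq 0\}$: a naive approach would treat each twist separately and produce a potentially infinite union of bad primes. What tames this is the observation above that $\tau$ descends to an endomorphism of the finite product of fields $\LA/\pfrak\LA$ and automatically preserves invertibility, so the single finite set $S$ attached to $f$ controls every $\tau^k(f)$ simultaneously.
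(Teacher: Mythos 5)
Your proof is correct and follows essentially the same route as the paper: invoke Theorem~\ref{thm:rational-A-motive-f-dim}(3), note that the nonzero element $f$ lies in only finitely many primes of $\LA$ (whose images in $\Spec(A)$ give the exceptional set), and use the $\tau$-stability of $\pfrak\LA$ to see that the infinitely many twists $\tau^k(f)$ contribute no additional bad primes. Your write-up is if anything a bit more careful than the paper's, which phrases the unit condition loosely as $f\notin(\LA)\pfrak$ where what is really needed is that $f$ avoids every prime of $\LA$ lying over $\pfrak$ --- exactly the point your reduction to the \'etale $L$-algebra $\LA/\pfrak\LA$ makes precise.
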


\begin{proof}
Theorem \ref{thm:rational-A-motive-f-dim}\eqref{item:3} guarantees that the $A$-motive is free at all primes $\pfrak$ of $A$ whose intersection with the constructed set $F$ is empty. As the primes $\pfrak$ are stable under the $\tau$-twist, these are exactly those primes $\pfrak$ such that the extended ideal $(\LA)\pfrak\subseteq \LA$ does not contain the element $f$. Since, $f$ is contained in only finitely many primes, we conclude from Theorem \ref{thm:rational-A-motive-f-dim} that $\mot_{(\pfrak)}$ is free for all but finitely many primes $\pfrak$ of $A$.
\end{proof}

\begin{exmp}\label{exmp:t-module-with-t-division-part2}
Let's consider again the $t$-module of Example \ref{exmp:t-module-with-t-division}. We have seen that $\mot$ has infinite $t$-division, and that $\Fq(\theta)[t,\frac{1}{t}]\otimes_{\Fq(\theta)[t]} \mot(E)$ is free as $\Fq(\theta)[t,\frac{1}{t}]$-module of rank $1$.
Hence, $\mot$ is free at all primes $\pfrak$ different from $(t)$, but not at the prime $(t)$.
\end{exmp}

The next goal is the following proposition.

\begin{prop}\label{prop:local-rank}
If $\mot=\mot(E)$ is free at $\pfrak$, one has
\[ \rk_{(\LA)_{(\pfrak)}}(\mot_{(\pfrak)}) = \trk{\pfrak}(E). \]
\end{prop}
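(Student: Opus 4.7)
The plan is to compute $\rk_{A_\pfrak}(\hat H_\pfrak)$ from the isomorphism $\hat H_\pfrak \cong \Hom^\tau_{\LA}(\mot,(\CA)_\pfrak)$ of Theorem~\ref{thm:h-isom-M-tate-dual}, which together with Remark~\ref{rem:p-rk-rk-Hp} will yield $\trk{\pfrak}(E)=r$. First, I would argue that every element of $\LA$ outside the primes of $\LA$ lying above $\pfrak$ is invertible in $(\CA)_\pfrak$: its image in $(\LA)/(\LA)\pfrak = \Fpp\otimes L$ is a unit (since $\Fpp/\Fq$ is separable, $\Fpp\otimes L$ is a product of fields), so its image in $(\CA)_\pfrak/\pfrak(\CA)_\pfrak=\Fpp\otimes\CI$ is a unit, and $\pfrak$-adic completeness of $(\CA)_\pfrak$ lifts this to invertibility. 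Hence every $\LA$-linear map $\mot\to(\CA)_\pfrak$ factors uniquely through $\mot_{(\pfrak)}$. Choosing an $(\LA)_{(\pfrak)}$-basis $\{m_1,\ldots,m_r\}$ of $\mot_{(\pfrak)}$ with $\tau_M(m_i)=\sum_j\Phi_{ij}m_j$ for some $\Phi\in\Mat_r((\LA)_{(\pfrak)})$, the isomorphism rewrites as $\hat H_\pfrak\cong S:=\{v\in(\CA)_\pfrak^r\mid\tau(v)=\Phi v\}$ of $A_\pfrak$-modules.

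My next step would be to show that $\Phi$, and hence its reduction $\bar\Phi\in\Mat_r(\Fpp\otimes L)$, is invertible. Using $\mot\cong L\{\tau\}^d$ as a left $L\{\tau\}$-module, one computes $\LA\cdot\tau\mot=(L\{\tau\}\tau)^d$, so $\mot/(\LA\cdot\tau\mot)\cong L^d$ as $L$-vector spaces, with $A$ acting via $d\phi$. By the generic characteristic hypothesis $d\phi_a=\ell(a)\cdot\id+(\text{nilpotent})$, so for $a\in\pfrak\setminus\{0\}$ (where $\ell(a)\neq 0$) the operator $d\phi_a$ is invertible on $L^d$, i.e., $\pfrak$ acts invertibly on $\mot/(\LA\cdot\tau\mot)$. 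As this quotient is $L$-finite, hence finitely generated over $(\LA)_{(\pfrak)}$, and $\pfrak$ sits inside the Jacobson radical of the semi-local ring $(\LA)_{(\pfrak)}$, Nakayama forces $(\mot/(\LA\cdot\tau\mot))_{(\pfrak)}=0$. Thus $\tau_M(\mot_{(\pfrak)})$ generates $\mot_{(\pfrak)}$ as an $(\LA)_{(\pfrak)}$-module, and Lemma~\ref{lem:N-dualizable} yields $\Phi\in\GL_r((\LA)_{(\pfrak)})$.

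To finish, I would compare $S/\pfrak S$ with $\bar S:=\{\bar v\in(\Fpp\otimes\CI)^r\mid\tau(\bar v)=\bar\Phi\bar v\}$. The natural map $S/\pfrak S\to\bar S$ is injective (if $v=\up{}w\in S$, cancelling the non-zero-divisor $\up{}$ in $\tau(v)=\Phi v$ shows $w\in S$) and surjective by $\up{}$-adic lifting: writing $v=\bar v+\up{}v_1+\up{}^2 v_2+\cdots$, each $v_n$ must solve an equation of the form $\tau(v_n)-\bar\Phi v_n=(\text{data from } v_0,\ldots,v_{n-1})$, and the operator $v\mapsto\tau(v)-\bar\Phi v$ is surjective on $(\Fpp\otimes\CI)^r$ by a Lang-Steinberg argument over $\Fpp\otimes\CI\cong\CI^{\degp}$, a product of algebraically closed fields cyclically permuted by $\tau$. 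The same Lang-Steinberg argument applied to $\GL_r$ produces $Z\in\GL_r(\Fpp\otimes\CI)$ with $\tau(Z)=\bar\Phi Z$, whose columns form an $\Fpp$-basis of $\bar S$, giving $\dim_\Fpp\bar S=r$. Combining everything, one obtains
\[ \trk{\pfrak}(E)=\dim_\Fpp(\hat H_\pfrak/\pfrak\hat H_\pfrak)=\dim_\Fpp(S/\pfrak S)=\dim_\Fpp\bar S=r. \]
The step I expect to be most delicate is the invertibility of $\Phi$: one must identify $\mot/(\LA\cdot\tau\mot)$ correctly via the $L\{\tau\}$-module structure to see it is $L$-finite, and verify carefully that $\pfrak$ lies in the Jacobson radical of the semi-local ring $(\LA)_{(\pfrak)}$, before Nakayama can be applied to conclude $\tau_M$-surjectivity on $\mot_{(\pfrak)}$.
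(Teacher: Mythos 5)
Your proposal is correct, and it proves the statement by working on the ``dual side'' of the paper's argument rather than reproducing it. The paper first shows (Lemma \ref{lem:iso-mod-pfrak}) that the completed motive $\tilde{\mot}_\pfrak$ admits an $(\CA)_\pfrak$-basis of $\tau$-invariant elements, using exactly the three ingredients you isolate: invertibility of the $\tau$-matrix after localizing at $\pfrak$ (a consequence of generic characteristic), a Lang isogeny over $\Fpp\otimes\CI$, and $\pfrak$-adic successive approximation via the surjectivity of $x\mapsto \tau(x)-x$; it then computes $\hat{H}_\pfrak\cong \Hom_{A_\pfrak}\bigl((\tilde{\mot}_\pfrak)^\tau,A_\pfrak\bigr)$ through a chain of restriction/extension adjunctions and reads off the rank. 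You instead identify $\hat{H}_\pfrak$ directly with the solution space $S=\{v\in(\CA)_\pfrak^r\mid \tau(v)=\Phi v\}$ and count $\dim_{\Fpp}(S/\pfrak S)$ by reduction modulo $\pfrak$ and Hensel-type lifting; this trades the adjunction bookkeeping for an explicit matrix computation, at the cost of needing surjectivity of the twisted map $v\mapsto\tau(v)-\bar{\Phi}v$ for a general invertible $\bar{\Phi}$ --- which you correctly reduce to the untwisted Artin--Schreier case via the Lang matrix $Z$, just as the paper first normalizes $\Theta\equiv\one_r\bmod\pfrak$ before lifting. Your argument for $\Phi\in\GL_r((\LA)_{(\pfrak)})$ --- identifying $\mot/(\LA\cdot\tau\mot)$ with the co-Lie algebra, on which $\pfrak$ acts invertibly in generic characteristic, and then applying Nakayama over the semilocal ring $(\LA)_{(\pfrak)}$ --- is a clean alternative to the paper's observation that the ideal $J$ generated by the elements $a\otimes 1-1\otimes\ell(a)$ satisfies $J^d\tilde{\mot}\subseteq\tau(\tilde{\mot})$ and is coprime to $\pfrak$; the two are the same fact in different clothing. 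All steps check out, including the factorization of $\Hom_{\LA}(\mot,(\CA)_\pfrak)$ through $\mot_{(\pfrak)}$ and the injectivity of $S/\pfrak S\to\bar{S}$ via cancellation of the non-zero-divisor $\up{}$.
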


As the rank of a free module is stable under base extension,
we will work throughout the rest of this section with
$\tilde{\mot}:= (\CA)\otimes_{\LA} \mot$ instead of $\mot$.
Be aware that with this notation,
$\tilde{\mot}_{(\pfrak)}=(\CA)_{(\pfrak)}\otimes_{(\LA)_{(\pfrak)}} \mot_{(\pfrak)}$, hence, if  $\mot$ is free at $\pfrak$, also $\tilde{\mot}$ is free at $\pfrak$.

\begin{lem}\label{lem:iso-mod-pfrak}
Let $\pfrak\lhd A$ be a prime ideal such that $\tilde{\mot}$ is free at $\pfrak$.
\begin{enumerate}
\item \label{item:bar-tau-iso} The map induced by $\tau$,
\[ \bar{\tau}: \tilde{\mot}/\pfrak\tilde{\mot} \to \tilde{\mot}/\pfrak\tilde{\mot}, \overline{m}\mapsto \overline{\tau(m)} \]
is a semi-linear isomorphism of finite dimensional $\CI$-vector spaces.
\item $\tilde{\mot}/\pfrak\tilde{\mot}$ has a basis consisting of elements in $(\tilde{\mot}/\pfrak\tilde{\mot})^\tau$.
\item \label{item:padic-invariants} Let $\tilde{\mot}_\pfrak=\varprojlim_{n} \left( \tilde{\mot}/\pfrak^{n+1}\tilde{\mot}\right)$ be the $\pfrak$-adic completion with the induced $(\CA)_\pfrak$-module structure and the induced $\tau$-action. 
Then $\tilde{\mot}_\pfrak$ has a basis consisting of elements in $(\tilde{\mot}_\pfrak)^\tau$.
\end{enumerate} 
\end{lem}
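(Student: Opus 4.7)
The proof proceeds part by part, with part~(1) providing the technical crux. For (1), I would first observe that $\tilde{\mot}/\pfrak\tilde{\mot} \cong \tilde{\mot}_{(\pfrak)}/\pfrak\tilde{\mot}_{(\pfrak)}$, which is free of rank $r:=\rk(\mot)$ over $\CA/\pfrak\CA \cong \Fpp \otimes \CI$ by the freeness hypothesis at $\pfrak$, giving a $\CI$-vector space of dimension $r\cdot \degp$; well-definedness and $\tau$-semi-linearity of $\bar{\tau}$ are formal. For bijectivity, the key input is the identification $\mot/\tau\mot \cong \Lie(E)^\vee$ coming from $\mot \cong L\{\tau\}^d$, on which the $A$-action is by $a \mapsto (\dphi_a)^\vee$. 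Generic characteristic gives $\dphi_a = \ell(a)\id + N_a$ with $N_a$ nilpotent, hence $\dphi_a$ is invertible for every $a \in A \setminus \{0\}$; thus $\pfrak \cdot (\mot/\tau\mot) = \mot/\tau\mot$, equivalently $\mot = \pfrak\mot + \tau\mot$. Base-change to $\CI$ and localization yield $\tilde{\mot}_{(\pfrak)} = \pfrak\tilde{\mot}_{(\pfrak)} + \tau\tilde{\mot}_{(\pfrak)}$, and since $(\CA)_{(\pfrak)}$ is semi-local with $\pfrak(\CA)_{(\pfrak)}$ in its Jacobson radical, Nakayama's lemma forces $\tilde{\mot}_{(\pfrak)} = \tau \tilde{\mot}_{(\pfrak)}$. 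Lemma~\ref{lem:N-dualizable} then gives invertibility of the structure matrix $\Phi$, whence $\bar{\tau}$ is a semi-linear bijection.

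Part~(2) is a Lang-type statement adapted to the non-field base $\Fpp \otimes \CI$. I would decompose $\Fpp \otimes \CI \cong \prod_{i=0}^{\degp-1} \CI$ via the distinct $\Fq$-embeddings of $\Fpp$ into $\CI$, and accordingly $\tilde{\mot}/\pfrak\tilde{\mot} = \bigoplus_i V_i$ with $\dim_{\CI} V_i = r$. By part~(1), $\bar{\tau}$ cyclically permutes the components, so each $\bar{\tau}\colon V_i \to V_{i+1 \bmod \degp}$ is a $q$-Frobenius semi-linear bijection, and $\bar{\tau}^{\degp}|_{V_0}$ is a $q^{\degp}$-semi-linear bijection of a finite-dimensional $\CI$-vector space. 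Classical Lang/Hilbert~90 then provides an $\Fpp$-basis $W \subseteq V_0$ of $\bar{\tau}^{\degp}$-fixed elements of cardinality $r$ (since $\CI^{\tau^{\degp}} = \Fpp$), with $\CI \otimes_{\Fpp} W = V_0$. For each $w \in W$, the tuple $(w, \bar{\tau}(w), \ldots, \bar{\tau}^{\degp-1}(w)) \in \bigoplus_i V_i$ is $\bar{\tau}$-invariant; a dimension count together with Lemma~\ref{lem:generated-by-invariants} shows these tuples form an $(\Fpp \otimes \CI)$-basis of $\tilde{\mot}/\pfrak\tilde{\mot}$ consisting of $\bar{\tau}$-invariants.

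For part~(3), I would lift the invariant basis of part~(2) to $\tilde{\mot}_\pfrak$ by $\pfrak$-adic successive approximation. Given an approximate lift $n_k \in \tilde{\mot}_\pfrak$ with $\tau(n_k) - n_k \in \pfrak^{k+1}\tilde{\mot}_\pfrak$, one seeks $\delta \in \pfrak^{k+1}\tilde{\mot}_\pfrak$ with $(\tau - \id)(\delta) \equiv -(\tau(n_k) - n_k) \pmod{\pfrak^{k+2}}$. Multiplication by $\up{k+1}$ identifies $\pfrak^{k+1}\tilde{\mot}_\pfrak/\pfrak^{k+2}\tilde{\mot}_\pfrak$ with $\tilde{\mot}/\pfrak\tilde{\mot}$ as $\tau$-modules, so solvability amounts to surjectivity of $\bar{\tau} - \id$ on $\tilde{\mot}/\pfrak\tilde{\mot}$; under the decomposition of part~(2), the equation $(\bar{\tau} - \id)v = w$ cycles through the components and reduces to $(\bar{\tau}^{\degp} - \id)(v_0) = w'$ on $V_0$, again solvable by Lang. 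The lifts converge $\pfrak$-adically in $\tilde{\mot}_\pfrak$; applied to a basis from part~(2) and combined with Nakayama over the complete ring $(\CA)_\pfrak$ (with $\pfrak$ in its Jacobson radical), this yields an $(\CA)_\pfrak$-basis of $\tilde{\mot}_\pfrak$ contained in $(\tilde{\mot}_\pfrak)^\tau$. The main obstacle throughout parts~(2) and~(3) is the extension of Lang's theorem to the non-field base $\Fpp \otimes \CI$, which the cyclic $\CI$-factor decomposition is designed to handle.
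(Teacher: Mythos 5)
Your proposal is correct and follows essentially the same strategy as the paper: surjectivity of $\bar{\tau}$ modulo $\pfrak$ from generic characteristic plus finite-dimensionality for part (1), a Lang-isogeny argument for part (2) (the paper only cites Anderson here, so your explicit reduction via $\Fpp\otimes\CI\cong\prod_{i}\CI$ and $\bar{\tau}^{\degp}$ is a welcome elaboration), and $\pfrak$-adic successive approximation via Artin--Schreier surjectivity for part (3), matching the paper's inductive construction of the bases $\vect{m}_n$. One caution in part (1): the identification $\mot/\tau(\mot)\cong(\LieE)^\vee$ holds only over a perfect base; for non-perfect $L$ one has $\tau(\mot)=(\tau L\{\tau\})^d\subsetneq (L\{\tau\}\tau)^d=\ker\bigl(\mot\to(\LieE)^\vee\bigr)$, and the equality $\mot=\pfrak\mot+\tau(\mot)$ can genuinely fail over $L$, so the computation must be run directly for $\tilde{\mot}$ over the perfect field $\CI$ (or with $\tau(\mot)$ replaced by the submodule it generates). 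This is easily repaired and is in effect what the paper does via the ideal $J$ generated by the elements $a\otimes 1-1\otimes\ell(a)$, using $J^d\tilde{\mot}\subseteq\tau(\tilde{\mot})$ together with $J^d+\pfrak(\CA)=\CA$.
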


\begin{proof}
Since $\tilde{\mot}$ is free at $\pfrak$, and so $\tilde{\mot}_{(\pfrak)}/\pfrak \tilde{\mot}_{(\pfrak)}$ is a free $(\CA)_{(\pfrak)}$-module of finite rank, and since $\tilde{\mot}/\pfrak\tilde{\mot}\cong \tilde{\mot}_{(\pfrak)}/\pfrak \tilde{\mot}_{(\pfrak)}$,
the quotient $\tilde{\mot}/\pfrak\tilde{\mot}$ is finitely generated as a module over $(\CA)/(\pfrak)\cong (A/\pfrak)\otimes \CI\cong \Fpp\otimes \CI$. Therefore it is a finite dimensional $\CI$-vector space. So for showing part \ref{item:bar-tau-iso}, it suffices to show that $\bar{\tau}$ is surjective.

Let $J\lhd \CA$ be the ideal generated by all elements $a\otimes 1-1\otimes \ell(a)$ for $a\in A$. By definition of the $A$-action on $E$ and on $\tilde{\mot}$, we have $J^d\tilde{\mot} \subseteq \tau(\tilde{\mot})$. Further the ideal $(\pfrak)$ is relatively prime to $J^d$, and hence there exist $f\in \pfrak$ and $x\in J^d$ such that $1=f+x$.

Therefore, for all $m\in \tilde{\mot}$, we have $m=fm+xm\in \pfrak\tilde{\mot} + \tau(\tilde{\mot})\subseteq \tilde{\mot}$. In other words, every element $\overline{m}\in \tilde{\mot}/\pfrak\tilde{\mot}$ is in the image of $\bar{\tau}$.

The second part is obtained by Lang isogeny (cf. first part of \cite[Lemma 1.8.2]{ga:tm} for more details).

For proving the third part, let $\vect{m}=\transp{(m_1,\ldots, m_r)}$ be an $(\CA)_\pfrak$-basis of $\tilde{\mot}_\pfrak$ which is a lift of a basis of elements in $(\tilde{\mot}/\pfrak\tilde{\mot})^\tau$, and let $\Theta \in \Mat_r((\CA)_\pfrak)$ be such that
\[ \tau( \vect{m}) = \Theta\vect{m}. \]
By choice of $\vect{m}$, we have $\Theta\equiv \one_r \mod{\pfrak}$, and in particular $\Theta\in \GL_r((\CA)_\pfrak)$. We will show by induction that there is a sequence $(\vect{m}_{n})_{n\in\NN}$ of bases of $\tilde{\mot}_{\pfrak}$ such that for all $n\geq 0$, 
\[ \tau(\vect{m}_n)\equiv \vect{m}_n \mod{\pfrak^{n+1}}\tilde{\mot}\]
and for all $n\geq 1$,
\[ \vect{m}_n\equiv \vect{m}_{n-1} \mod{\pfrak^{n}}\tilde{\mot}. \]
The limit of this sequence is then a basis of $\tilde{\mot}_\pfrak$ consisting of $\tau$-invariant elements.

The case $n=0$ is given by $\vect{m}_0:=\vect{m}$. For $n\geq 1$, let $\Theta_{n-1}$ be such that $\tau(\vect{m}_{n-1})=\Theta_{n-1} \vect{m}_{n-1}$. The induction hypothesis on $\vect{m}_{n-1}$ ensures that
\[ \Theta_{n-1}\equiv \one_r \mod{\pfrak^n}. \]

As $\CI$ is separably algebraically closed, the map $x\mapsto \tau(x)-x$ is surjective on $\Fpp\otimes \CI$. Hence there exists a matrix $Y\in \pfrak^{n}\Mat_r((\CA)_\pfrak)$ such that
\[  \tau(Y)-Y \equiv \Theta_{n-1}-\one_r \mod{\pfrak^{n+1}}. \]
Setting $\vect{m}_n:=(\one_r-Y)\cdot \vect{m}_{n-1}\equiv \vect{m}_{n-1}\mod{\pfrak^{n}}\tilde{\mot}$, we therefore obtain
\begin{eqnarray*}
\tau(\vect{m}_n) &=& \tau\left((\one_r-Y)\cdot \vect{m}_{n-1}\right)\\
&=& (\one_r-\tau(Y))\cdot \Theta_{n-1}\cdot \vect{m}_{n-1}\\
&\equiv & \left( (\one_r-\tau(Y))+(\one_r-\tau(Y))(\tau(Y)-Y) \right) \cdot \vect{m}_{n-1} \mod{\pfrak^{n+1}}\tilde{\mot}\\
&\equiv & \left( \one_r-\tau(Y) + \tau(Y)-Y \right) \cdot \vect{m}_{n-1} \mod{\pfrak^{n+1}}\tilde{\mot}\\
&= & \vect{m}_{n}
\end{eqnarray*} 
\end{proof}

\begin{cor}\label{cor:mots-dualizable}
Let $\pfrak\lhd A$ be a prime ideal such that $\mot$ is free at $\pfrak$. 
\begin{enumerate}
\item $\tilde{\mot}/\pfrak\tilde{\mot}$, $\tilde{\mot}_{(\pfrak)}$ and $\tilde{\mot}_{\pfrak}$ are dualizable difference modules over
$\Fpp\otimes\CI$, over $(\CA)_{(\pfrak)}$ and over $(\CA)_{\pfrak}$, respectively.
\item $\mot/\pfrak\mot$, $\mot_{(\pfrak)}$ and $\mot_{\pfrak}$ are dualizable difference modules over
$\Fpp\otimes L$, over $(\LA)_{(\pfrak)}$ and over $(\LA)_{\pfrak}$, respectively.
\end{enumerate}
\end{cor}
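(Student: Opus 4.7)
The strategy is to apply the characterization of dualizability given in Lemma \ref{lem:N-dualizable}: for a free difference module of finite rank, dualizability is equivalent to invertibility of the matrix $\Phi$ of $\tau$ with respect to any basis, equivalently to the image $\tau(N)$ containing a basis. So in each of the six cases, the plan is to verify first that the module is free of finite rank, and then to verify invertibility of the matrix of $\tau$. Freeness will not be the hard part: since $\mot$ is free at $\pfrak$ by assumption, so is $\tilde{\mot}=\CA\otimes_{\LA}\mot$, and this freeness descends to the quotient modulo $\pfrak$ (using $(\CA)_{(\pfrak)}/\pfrak(\CA)_{(\pfrak)}\cong \Fpp\otimes \CI$) and passes to the $\pfrak$-adic completion; the corresponding statements on the $\mot$-side are immediate from the hypothesis and from general facts about completions of finitely generated free modules.

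For $\tilde{\mot}/\pfrak\tilde{\mot}$ I would invoke Lemma \ref{lem:iso-mod-pfrak}\eqref{item:bar-tau-iso}: since $\bar{\tau}$ is surjective, the image $\bar{\tau}(\tilde{\mot}/\pfrak\tilde{\mot})$ equals the whole module, so it trivially contains a basis. For $\tilde{\mot}_\pfrak$, Lemma \ref{lem:iso-mod-pfrak}\eqref{item:padic-invariants} already supplies a basis of $\tau$-invariant elements, which tautologically lies in $\tau(\tilde{\mot}_\pfrak)$; equivalently, the matrix $\Phi$ in such a basis is the identity.

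For $\tilde{\mot}_{(\pfrak)}$ the key point is to lift invertibility of the matrix from the residue to $(\CA)_{(\pfrak)}$. Fixing a $(\CA)_{(\pfrak)}$-basis of $\tilde{\mot}_{(\pfrak)}$, the matrix $\Phi$ representing $\tau$ reduces modulo $\pfrak$ to the matrix of $\bar{\tau}$ on $\tilde{\mot}/\pfrak\tilde{\mot}$, which is invertible by the previous step. The ring $(\CA)_{(\pfrak)}$ is semi-local with maximal ideals exactly the primes of $\CA$ dividing $\pfrak$, so $\pfrak(\CA)_{(\pfrak)}$ is contained in the Jacobson radical and units can be detected modulo $\pfrak$; applying this to $\det(\Phi)$ gives invertibility of $\Phi$, hence dualizability.

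The transfer to the $\mot$-side would go through the faithful flatness of $L\hookrightarrow \CI$, which is preserved by the relevant localizations/completions/quotients. Concretely, for a matrix $\Phi$ representing $\tau$ on a chosen basis of $\mot_{(\pfrak)}$, its image under base change to $(\CA)_{(\pfrak)}$ is the corresponding matrix on $\tilde{\mot}_{(\pfrak)}$, which is invertible by the previous step; since faithfully flat ring extensions reflect units, $\Phi$ is already invertible in $(\LA)_{(\pfrak)}$. Analogous arguments cover $\mot/\pfrak\mot$ (over $\Fpp\otimes L \hookrightarrow \Fpp\otimes \CI$) and $\mot_\pfrak$ (over $(\LA)_\pfrak \hookrightarrow (\CA)_\pfrak$). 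I expect the main obstacle to be the bookkeeping around the various semi-local structures — in particular, verifying at each stage that $\pfrak$ lies in the Jacobson radical so that the standard ``lift invertibility from the residue'' step applies, and checking that tensoring with $(\CA)_{(\pfrak)}$ really turns the matrix on the $\mot$-side into the one on the $\tilde{\mot}$-side. Once these compatibilities are nailed down, all six dualizability statements follow uniformly from the single input Lemma \ref{lem:iso-mod-pfrak}.
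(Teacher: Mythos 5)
Your proposal is correct and follows essentially the same route as the paper: both verify the invertibility condition of Lemma \ref{lem:N-dualizable} using Lemma \ref{lem:iso-mod-pfrak}, lift invertibility of the $\tau$-matrix from the residue modulo $\pfrak$ via the Jacobson radical of the semi-local ring $(\CA)_{(\pfrak)}$ (resp.\ the complete ring $(\CA)_\pfrak$), and descend from the $\CA$-side to the $\LA$-side by the fact that units are detected after base change to $\CI$. The only cosmetic differences are that the paper uses the $\tau$-invariant basis from Lemma \ref{lem:iso-mod-pfrak} rather than surjectivity of $\bar{\tau}$ for the residue module, and merely asserts the unit-reflection for $L\subseteq\CI$ that you justify via faithful flatness.
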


\begin{proof}
By definition, all six modules are free of finite rank over the respective difference rings. So we only have to check that they satisfy one (and hence all) of the equivalent conditions of Lemma \ref{lem:N-dualizable}.

We start with the first part.
By Lemma \ref{lem:iso-mod-pfrak}, $\tilde{\mot}/\pfrak\tilde{\mot}$ has a basis $\bar{\vect{m}}=\transp{(\bar{m}_1,\ldots, \bar{m}_r)}$ of $\tau$-invariant elements. So this basis satisfies $\tau(\bar{\vect{m}})=\one_r\bar{\vect{m}}$, and hence condition \eqref{item:Phi-invertible} of Lemma \ref{lem:N-dualizable} is satisfied for $\tilde{\mot}/\pfrak\tilde{\mot}$.
We have already seen in the proof of the third part of Lemma \ref{lem:N-dualizable}, if we lift this basis $\bar{\vect{m}}$ to a basis $\vect{m}=\transp{(m_1,\ldots, m_r)}$ of $\tilde{\mot}_{\pfrak}$, the matrix $\Theta$ defined by $\tau( \vect{m}) = \Theta\vect{m}$ is invertible, as it is congruent to $\one_r$ modulo $\pfrak$. So also $\tilde{\mot}_{\pfrak}$ is a dualizable difference module.
The very same argument shows that also $\tilde{\mot}_{(\pfrak)}$ satisfies condition \eqref{item:Phi-invertible} of Lemma \ref{lem:N-dualizable}.

For the second part, we have to recognize that an element $\alpha$ in $\Fpp\otimes L$, in $(\LA)_{(\pfrak)}$ or in $(\LA)_{\pfrak}$ is invertible, if it is invertible as an element of $\Fpp\otimes\CI$, of $(\CA)_{(\pfrak)}$ and of $(\CA)_{\pfrak}$, respectively.
Then condition \ref{item:Phi-invertible} of Lemma \ref{lem:N-dualizable} is easily verified for each module using the result for the corresponding module of the first part.
\end{proof}

\begin{proof}[Proof of Proposition \ref{prop:local-rank}]

Using Corollary \ref{cor:mots-dualizable} and Lemma \ref{lem:iso-mod-pfrak}\eqref{item:padic-invariants} in combination with Lemma \ref{lem:generated-by-invariants}, as well as $A_\pfrak = (\CA)_\pfrak^{\,\tau}$, we conclude that we have an isomorphism of $(\CA)_\pfrak$-modules
\begin{equation}\label{eq:padic-iso}
(\CA)_\pfrak\otimes_{A_\pfrak}  (\tilde{\mot}_\pfrak)^\tau \cong \tilde{\mot}_\pfrak,
\end{equation}
and that
\begin{equation}\label{eq:padic-equal-rank}
\rk_{A_\pfrak}\left( (\tilde{\mot}_\pfrak)^\tau\right) = \rk_{(\CA)_\pfrak} \left( \tilde{\mot}_\pfrak\right).
\end{equation}

Another incredient of the proof is the adjointness of scalar restriction and scalar extension, i.e.~that for commutative rings $R\subseteq S$, an $R$-module $M$, and an $S$-module $N$, we have a natural isomorphism
\begin{equation}\label{eq:restr-ext}
\Hom_S(S\otimes_R M,N) \cong \Hom_R(M,N).
\end{equation}

We obtain the following chain of isomorphisms:
\begin{eqnarray*}
\Hom_{\LA}(\mot, (\CA)_\pfrak) &\stackrel{\eqref{eq:restr-ext}}{\cong} & \Hom_{\CA}(\tilde{\mot},(\CA)_\pfrak)\\
&\stackrel{\eqref{eq:restr-ext}}{\cong} & \Hom_{(\CA)_\pfrak}\left(\tilde{\mot}_\pfrak,(\CA)_\pfrak\right) \\
&\stackrel{\eqref{eq:padic-iso}}{\cong} & \Hom_{(\CA)_\pfrak}\left((\CA)_\pfrak\otimes_{A_\pfrak}  (\tilde{\mot}_\pfrak)^\tau,(\CA)_\pfrak\right)\\
&\stackrel{\eqref{eq:restr-ext}}{\cong} & \Hom_{A_\pfrak}\left((\tilde{\mot}_\pfrak)^\tau,(\CA)_\pfrak\right).
\end{eqnarray*}

As all isomorphisms are compatible with the $\tau$-action, we can restrict to $\tau$-equivariant homomorphisms.
\begin{equation*}
\hat{H}_\pfrak \stackrel{\ip}{\cong}  \Hom_{\LA}^\tau(\mot, (\CA)_\pfrak) 
\cong \Hom_{A_\pfrak}^\tau\left((\tilde{\mot}_\pfrak)^\tau,(\CA)_\pfrak\right)
\cong \Hom_{A_\pfrak}\left((\tilde{\mot}_\pfrak)^\tau,A_\pfrak\right).
\end{equation*}
Hence,
\begin{eqnarray*}
\trk{\pfrak}(E) &\stackrel{\text{Rem.}\ref{rem:p-rk-rk-Hp}}{=}& \rk_{A_\pfrak}(\hat{H}_\pfrak)=\rk_{A_\pfrak}\left(\Hom_{A_\pfrak}((\tilde{\mot}_\pfrak)^\tau,A_\pfrak)\right) \\
&=& \rk_{A_\pfrak}( (\tilde{\mot}_\pfrak)^\tau) \stackrel{\eqref{eq:padic-equal-rank}}{=}\rk_{(\CA)_\pfrak} (\tilde{\mot}_\pfrak) 
=\rk_{(\LA)_{(\pfrak)}} (\mot_{(\pfrak)}).
\end{eqnarray*}
\end{proof}

\begin{proof}[Proof of Theorem \ref{thm:equality-of-ranks}]
If $\mot$ is free at $\pfrak$, then one has
\[ \rk(\mot)=\dim_{\Quot(\LA)} \left(\Quot(\LA)\otimes_{\LA} \mot\right)  = \rk_{(\LA)_{(\pfrak)}} \left((\LA)_{(\pfrak)}\otimes_{\LA} \mot \right), \]
and further by Proposition \ref{prop:local-rank},
\[ \rk_{(\LA)_{(\pfrak)}} \left((\LA)_{(\pfrak)}\otimes_{\LA} \mot \right)=\trk{\pfrak}(E). \]
By Proposition \ref{prop:free-almost-everywhere}, the motive $\mot$ is free at almost all primes $\pfrak$. Hence, we conclude the first part.
For uniformizable Anderson modules, we have additionally that $\trk{\pfrak}(E)$ does not depend on $\pfrak$ by Corollary \ref{cor:uniformizable-implies-constant-prank}. Hence, the equality has to hold not only for almost all primes, but for all primes.
\end{proof}

\begin{rem}
Our proof also recovers Anderson's statement that for abelian Anderson modules the equality of the ranks holds for all primes. Namely in that case, $\mot$ is a locally free $\LA$-module, and hence $\mot$ is free at all primes $\pfrak$.
\end{rem}

\section{Uniformizability and rigid analytic trivializations}\label{sec:rigid-analytic-trivializations}

Recall that an Anderson $A$-module $E$ is called \emph{uniformizable}, if its exponential map $\e:\LieE\to E(\CI)$ is surjective.

In the case $A=\Fq[t]$, Anderson proved that for an abelian Anderson $A$-module $E$ the following are equivalent (see \cite[Theorem 4]{ga:tm}):
\begin{enumerate}
\item $E$ is uniformizable,
\item its period lattice $\Lambda_E$ is a free $A$-module of rank $r=\rk(E):=\rk(\mot)$,
\item its motive $\mot$ is rigid analytically trivial.
\end{enumerate}

Here, the $A$-motive $\mot$ of an abelian Anderson module $E$ is defined to be \emph{rigid analytically trivial} if the natural homomorphism
$\TT \otimes_A (\mot_\TT)^\tau \to \mot_\TT$ is an isomorphism (see \cite[Definition 3.18]{uh-akj:pthshcff}), where $\mot_\TT:=\TT\otimes_{\LA}\mot$.

Since in the abelian case, $\mot$ is a locally free $\LA$-module, and hence $\mot_\TT$ is a locally free $\TT$-module,
 $\mot$ is rigid analytically trivial if and only if the dual 
\[\dual{(\mot_\TT)} = \Hom_{\TT}(\mot_\TT,\TT)=\Hom_{\LA}(\mot,\TT)\isom \TT\otimes_{\LA} \dual{\mot}\] 
is rigid analytically trivial, if and only if
 $\TT\otimes_A \left(\dual{(\mot_\TT)}\right)^\tau \to \dual{(\mot_\TT)}$ is an isomorphism.

In the non-abelian case, we prove the following chain of implications that is close to that equivalence of conditions.

\begin{thm}\label{thm:conditions-for-uniformizability}
Let $E$ be an Anderson $A$-module over $L$, and $r=\rk(\mot)$ the rank of its $A$-motive.
Consider the statements:
\begin{enumerate}
\item \label{item:cond-rat-and-loc-free} $\mot_\TT$ is locally free and $\TT\otimes_A \left(\dual{(\mot_\TT)}\right)^\tau \to \dual{(\mot_\TT)}$ is an isomorphism.
\item \label{item:cond-E-unif} $E$ is uniformizable.
\item \label{item:cond-lattice-of-full-rank} $\Lambda_E$ is an $A$-module of rank $r$.
\item \label{item:cond-rat} $\dual{(\mot_\TT)}$ is locally free of rank $r$ and $\TT\otimes_A \left(\dual{(\mot_\TT)}\right)^\tau \to \dual{(\mot_\TT)}$ is an isomorphism.
\end{enumerate}
The following three implications hold:
\[ \eqref{item:cond-rat-and-loc-free} \Rightarrow \eqref{item:cond-E-unif} \Rightarrow \eqref{item:cond-lattice-of-full-rank} \Rightarrow \eqref{item:cond-rat}. \]
\end{thm}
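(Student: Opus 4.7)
The plan is to prove the three implications in order, drawing on the central chain $\fd_{A/\Fq[u]}\otimes_A \Lambda_E \xrightarrow{\delta_u} \fsf(E) \xrightarrow{\iota} (\dual{\mot_\TT})^\tau$ established in earlier sections, together with the rank comparisons proved above.

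For \eqref{item:cond-E-unif}$\Rightarrow$\eqref{item:cond-lattice-of-full-rank} I would simply combine Corollary \ref{cor:uniformizable-implies-constant-prank}, giving $\rk_A(\fsf(E)) = \trk{\pfrak}(E)$ for uniformizable $E$, with Theorem \ref{thm:equality-of-ranks}, giving $\trk{\pfrak}(E) = \rk(\mot) = r$ in that case. Since $\fd_{A/\Fq[u]}$ is an invertible $A$-module and $\fsf(E) \cong \fd_{A/\Fq[u]}\otimes_A \Lambda_E$, one has $\rk_A(\Lambda_E) = \rk_A(\fsf(E)) = r$. For \eqref{item:cond-lattice-of-full-rank}$\Rightarrow$\eqref{item:cond-rat}, the same chain transports $\rk_A(\Lambda_E) = r$ to $\rk_A((\dual{\mot_\TT})^\tau) = r$. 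The natural map $\alpha \colon \TT \otimes_A (\dual{\mot_\TT})^\tau \to \dual{\mot_\TT}$ is injective by the general principle for dualizable difference modules over a domain (cf.\ Lemma \ref{lem:generated-by-invariants}) that $A$-linearly independent $\tau$-invariants remain $\TT$-linearly independent. Its source is free of $\TT$-rank $r$; extending scalars to $\Quot(\TT)$ and using $\rk(\mot) = r$ shows $\dual{\mot_\TT}$ has generic $\TT$-rank $r$ as well. Surjectivity of $\alpha$ then reduces to showing that its cokernel, a $\TT$-torsion module inheriting a compatible $\tau$-structure, vanishes, which I would obtain by a difference-module saturation argument; local freeness of $\dual{\mot_\TT}$ of rank $r$ is then immediate since $\dual{\mot_\TT}$ becomes finitely generated and $\TT$-torsion-free.

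For \eqref{item:cond-rat-and-loc-free}$\Rightarrow$\eqref{item:cond-E-unif} the plan is to adapt Anderson's classical argument to the non-abelian setting. Condition \eqref{item:cond-rat-and-loc-free} supplies a rigid analytic trivialization $\Upsilon \in \GL_r(\TT)$ of $\mot_\TT$ whose rows, via $\iota^{-1}$, correspond to an $A$-basis $h_1, \ldots, h_r \in \fsf(E) \subseteq E(\TT)$. For arbitrary $c \in E(\CI)$ I would then construct an explicit preimage of $c$ under $\e \colon \LieE \to E(\CI)$: evaluating the $h_i$ and extracting appropriate residues in the $\infty$-adic analytic picture produces periods that generate $\Lambda_E$ as an $A$-module of rank $r$, while the analytic identification $\mot_\TT \cong \TT^r$ provided by $\Upsilon$ allows one to invert $\e$ on all of $E(\CI)$, not merely on a dense open subset.

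The hard part will be \eqref{item:cond-rat-and-loc-free}$\Rightarrow$\eqref{item:cond-E-unif}. Anderson's original proof uses in an essential way the finite generation of $\mot$ over $\LA$, which precisely fails in the non-abelian case; the crux here is to substitute the genuinely analytic local freeness of $\mot_\TT$ for the algebraic finite generation of $\mot$, and to push the $\e$-preimage construction through at the $\TT$-level rather than the $\LA$-level. Controlling the $\infty$-adic behavior of the $h_i$ without access to a global $\LA$-basis of $\mot$ is the delicate technical step.
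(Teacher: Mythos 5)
Your arguments for \eqref{item:cond-E-unif}$\Rightarrow$\eqref{item:cond-lattice-of-full-rank} and \eqref{item:cond-lattice-of-full-rank}$\Rightarrow$\eqref{item:cond-rat} are essentially the paper's. Two caveats on the second of these: the finite generation of $\dual{(\mot_\TT)}$ is not automatic in the non-abelian case (it comes from Proposition \ref{prop:dual-locally-free} applied to $\mot_\TT$, which in turn rests on Theorem \ref{thm:rational-A-motive-f-dim}); and your ``difference-module saturation argument'' for surjectivity of $\alpha$ is carrying real weight. A torsion cokernel of an injection of torsion-free modules of equal rank need not vanish (think of $2\ZZ\subset\ZZ$), so equality of generic ranks alone does not suffice; what is needed is the specific statement of B\"ockle--Hartl that the natural map $\TT\otimes_A N^\tau\to N$ is injective and becomes an isomorphism as soon as $\rk_A(N^\tau)=\rk_\TT(N)$. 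Citing that result closes this step, but as written your sketch leaves the actual mechanism unproved.

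The genuine gap is \eqref{item:cond-rat-and-loc-free}$\Rightarrow$\eqref{item:cond-E-unif}. You propose to invert $\e$ explicitly by evaluating special functions and extracting residues, and you yourself flag that you cannot control the $\infty$-adic behavior without a global $\LA$-basis of $\mot$ --- which is exactly the point at which this approach breaks down in the non-abelian setting. The missing idea is the homological reformulation (which is also the substance of Anderson's original proof): one identifies $0\to\Lambda_E\to\LieE\xrightarrow{\e}E(\CI)$ with the left-exact sequence obtained by applying $\Hom_R(\mot,-)$, $R=(\CA)\{\tau\}$, to $0\to\Hom^c(K_\infty/A,\CI)\to\Hom^c(K_\infty,\CI)\to\Hom^c(A,\CI)\to 0$, so that surjectivity of $\e$ reduces to the vanishing of $\Ext^1_R\bigl(\mot,\Hom^c(K_\infty/A,\CI)\bigr)$. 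Since $\Hom^c(K_\infty/A,\CI)\cong\Omega_{A/\Fq}\otimes_A\TT$ is a module over $Q=\TT\{\tau\}$, this $\Ext^1$ can be computed from any projective $Q$-resolution of $\mot_\TT$, and hypothesis \eqref{item:cond-rat-and-loc-free} provides one, namely
\[ 0\to Q\otimes_A (\mot_\TT)^\tau \xrightarrow{\,q\otimes m\mapsto q(1-\tau)\otimes m\,} Q\otimes_A (\mot_\TT)^\tau \to \mot_\TT\to 0, \]
whence the vanishing follows from the surjectivity of $x\mapsto x-\tau(x)$ on $\TT$ (Lemma \ref{lem:Delta-surjective}). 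This is precisely how local freeness of $\mot_\TT$ substitutes for finite generation of $\mot$ over $\LA$: the entire argument takes place after base change to $Q$, where only $\mot_\TT$ matters. Without this (or some equivalent) mechanism, your outline of this implication is a statement of intent rather than a proof.
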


\subsection{Preparations from commutative algebra}

Let $\cR$ be a Dedekind domain, and $\cF=\Quot(\cR)$ its field of fractions.

\begin{prop}\label{prop:dual-locally-free}
 Let $M$ be an $\cR$-module such that $\cF\otimes_\cR M$ is a finite dimensional vector space.
Then $\dual{M}:=\Hom_\cR(M,\cR)$ is a finitely generated locally free $\cR$-module.
\end{prop}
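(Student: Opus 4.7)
The plan is to embed $\dual{M}$ as a submodule of a free $\cR$-module of finite rank, and then invoke the structure theory of modules over Dedekind domains. Let $r := \dim_\cF(\cF \otimes_\cR M)$, which is finite by hypothesis, and choose elements $m_1, \dots, m_r \in M$ whose images in $M_\cF := \cF \otimes_\cR M$ form an $\cF$-basis. I would then consider the evaluation homomorphism
\[ \Phi : \dual{M} \longrightarrow \cR^r, \qquad f \longmapsto \bigl(f(m_1), \dots, f(m_r)\bigr). \]

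The heart of the argument is showing that $\Phi$ is injective. First note that every $f \in \dual{M}$ automatically vanishes on the torsion submodule $M_{\mathrm{tors}}$: if $ct = 0$ in $M$ with $c \in \cR \setminus \{0\}$, then $c\cdot f(t) = f(ct) = 0$ in $\cR$, forcing $f(t) = 0$ since $\cR$ is a domain. Now suppose $f(m_i) = 0$ for all $i$ and let $m \in M$ be arbitrary. The image of $m$ in $M_\cF$ is some $\cF$-linear combination of the images $\bar m_i$, so clearing denominators produces $d \in \cR \setminus \{0\}$ and $b_1, \dots, b_r \in \cR$ such that $dm - \sum_i b_i m_i \in M_{\mathrm{tors}}$. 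Applying $f$ to this torsion element yields $0 = df(m) - \sum_i b_i f(m_i) = df(m)$, and since $\cR$ is a domain, $f(m) = 0$.

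Once injectivity is established, the conclusion is immediate from standard facts. As $\cR$ is Dedekind and hence Noetherian, the free module $\cR^r$ is Noetherian, so its submodule $\dual{M}$ is finitely generated. It is moreover torsion-free, being a submodule of $\cR^r$. Since every finitely generated torsion-free module over a Dedekind domain is projective and therefore locally free, this finishes the proof. The main obstacle in this plan is the injectivity of $\Phi$, which hinges on the observation that $\cR$-linear maps to $\cR$ must annihilate torsion; beyond this, the argument reduces to a direct application of the structure theory of modules over Dedekind domains.
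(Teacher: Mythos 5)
Your proof is correct. It follows the same overall strategy as the paper --- embed $\dual{M}$ into a finitely generated torsion-free $\cR$-module, then invoke Noetherianity and the structure theory of modules over Dedekind domains --- but the embedding itself is constructed differently. The paper picks a maximal $\cF$-linearly independent family of functionals $\mu_1,\ldots,\mu_s\in\dual{M}$, produces elements $m_i\in M$ with $\mu_i(m_j)=0$ for $i\ne j$ and $\mu_i(m_i)\ne 0$, and bounds an arbitrary $\mu=\sum x_i\mu_i$ by showing $x_i\in\mu_i(m_i)^{-1}\cR$; it also disposes of torsion up front via $\Hom_\cR(M,\cR)=\Hom_\cR(M/M^{\mathrm{tor}},\cR)$. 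You instead evaluate at elements $m_1,\ldots,m_r$ of $M$ whose images span $M_\cF$, and prove injectivity of the resulting map $\dual{M}\to\cR^r$ by the observation that every functional into the domain $\cR$ annihilates torsion. Your route is slightly more economical: it avoids constructing the ``dual elements'' to a family of functionals (which in the paper requires clearing denominators to get the $m_i$ inside $M$ rather than $\cF\otimes_\cR M$), and it handles the torsion reduction and the injectivity in one stroke. Both arguments land in the same place and use the same final input, namely that a finitely generated torsion-free module over a Dedekind domain is locally free.
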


\begin{proof}
As $\cR$ is torsionfree as an $\cR$-module, we have
\[ \Hom_\cR(M,\cR) = \Hom_\cR(M/M^{tor},\cR), \]
where $M^{tor}$ is the torsion submodule of $M$. Hence, without loss of generality, we can assume that $M$ is torsionfree.
Then, the natural map $M\to \cF\otimes_\cR M$ is injective.

We have a natural embedding 
\begin{equation}\label{eq:embedding-of-M-dual}
 \dual{M}=\Hom_\cR(M,\cR)\subset \Hom_\cR(M,\cF) = \Hom_\cF(\cF\otimes_\cR M,\cF),
\end{equation}
where the second equality stems from equation \eqref{eq:restr-ext}. Hence, $\dual{M}$ is torsionfree, too.

Taking $\mu_1,\ldots, \mu_s\in \Hom_\cR(M,\cR)$ a maximal $\cF$-linear independent subset%
, there exist $m_1,\ldots, m_s\in M\subset \cF\otimes_\cR M$ such that
$\mu_i(m_j)=0$ for all $i\ne j$ and $\mu_i(m_i)\ne 0$ for all $i$.
Further for any $\mu\in \Hom_\cR(M,\cR)$, there are $x_i\in \cF$ such that $\mu=\sum_{i=1}^s x_i\mu_i$.
Then for all $i=1,\ldots s$,
\[  x_i\mu_i(m_i)=\mu(m_i)\in \cR, \]
i.e.~ $x_i\in \mu_i(m_i)^{-1}\cR\subset \cF$.
Therefore, $\Hom_\cR(M,\cR)$ is an $\cR$-submodule of the $\cR$-module generated by $\mu_i(m_i)^{-1}\mu_i\in \Hom_\cR(M,\cF)$ for $i=1,\ldots, s$.
As $\cR$ is Noetherian, this implies that $\Hom_\cR(M,\cR)$ is also finitely generated, and as $\cR$ is even a Dedekind domain, every finitely generated torsionfree $\cR$-module is locally free.
\end{proof}

\subsection{Proof of Theorem \ref{thm:conditions-for-uniformizability}}

The proof of Theorem \ref{thm:conditions-for-uniformizability} takes the whole of this subsection.

We start with the implication \eqref{item:cond-E-unif}$\Rightarrow$\eqref{item:cond-lattice-of-full-rank}, i.e.~if $E$ is uniformizable, then $\rk_A(\Lambda_E)=r=\rk(\mot)$. 
\begin{proof}[Proof of \eqref{item:cond-E-unif}$\Rightarrow$\eqref{item:cond-lattice-of-full-rank}]
Since by \cite[Theorem 3.11]{qg-am:sfgtshrd}, we have an isomorphism $\fd_{A/\Fq[u]}\cdot \Lambda_E \xrightarrow{\delta_u} \fsf(E)$ (see Section \ref{sec:homomorphisms}), and the different ideal $\fd_{A/\Fq[u]}$ is a rank one $A$-module, we have $\rk_A(\Lambda_E)=\rk_A(\fsf(E))$.
By Corollary \ref{cor:uniformizable-implies-constant-prank}, we have $\rk_A(\fsf(E))=\trk{\pfrak}(E)$ for all primes $\pfrak$ of $A$, and
by Theorem \ref{thm:equality-of-ranks}, we have $\rk(\mot)=\trk{\pfrak}(E)$ for almost all primes $p$.
In total, we get
\[ \rk_A(\Lambda)=\rk(\mot)=r. \qedhere \]
\end{proof}

We proceed with the implication \eqref{item:cond-lattice-of-full-rank}$\Rightarrow$\eqref{item:cond-rat}, i.e.~if $\rk_A(\Lambda)=r$, then $\dual{(\mot_\TT)}$ is locally free of rank $r$ and $\TT\otimes_A \left(\dual{(\mot_\TT)}\right)^\tau \to \dual{(\mot_\TT)}$ is an isomorphism.
\begin{proof}[Proof of \eqref{item:cond-lattice-of-full-rank}$\Rightarrow$\eqref{item:cond-rat}]
Combining the isomorphisms $\iota$ and $\delta_u$, we obtain an isomorphism
\[ \iota\circ \delta_u: \fd_{A/\Fq[u]}\cdot \Lambda_E \to \Hom_{\LA}^\tau(\mot,\TT),\]
and
\[ \Hom_{\LA}^\tau(\mot,\TT)=\Hom_{\TT}^\tau(\mot_\TT,\TT)=\left(\dual{(\mot_\TT)}\right)^\tau. \]
Hence, $\left(\dual{(\mot_\TT)}\right)^\tau$ is a locally free $A$-module of rank $r$.

By Theorem \ref{thm:rational-A-motive-f-dim}, $\mot_\TT$ satisfies the hypothesis of Proposition \ref{prop:dual-locally-free}, and
therefore, $\dual{(\mot_\TT)}$ is locally free.
Since, the homomorphism $\TT\otimes_A \left(\dual{(\mot_\TT)}\right)^\tau \to \dual{(\mot_\TT)}$ is always injective by \cite[Lemma 4.2(b)]{gb-uh:uft}, and $\dual{(\mot_\TT)}\subseteq \LL\otimes_\TT \dual{(\mot_\TT)}=\dual{(\mot_\LL)}$, we have
\[ r= \rk_A(\left(\dual{(\mot_\TT)}\right)^\tau \leq \rk_\TT\left(\dual{(\mot_\TT)}\right) \leq \dim_\LL(\dual{(\mot_\LL)})=\dim_\LL(\mot_\LL)=r,\]
i.e.~$\rk_\TT\left(\dual{(\mot_\TT)}\right) =r$.
We conclude by \cite[Lemma 4.2(c)]{gb-uh:uft} which states that $\rk_A(\left(\dual{(\mot_\TT)}\right)^\tau = \rk_\TT\left(\dual{(\mot_\TT)}\right)$ implies that the given homomorphism is an isomorphism.
\end{proof}

The implication \eqref{item:cond-rat-and-loc-free}$\Rightarrow$\eqref{item:cond-E-unif} is proved in the same way as Anderson's theorem in the abelian $t$-motive case (see \cite[Section 2.6]{ga:tm}). We recall the main steps here, and start with an observation.
\begin{lem}\label{lem:Delta-surjective}
The $A$-linear map $\Delta:\TT\to \TT, x\mapsto (1-\tau)(x)=x-\tau(x)$ is surjective with kernel $A=\TT^\tau$.
\end{lem}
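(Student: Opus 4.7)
The plan is to handle the two assertions separately. For the kernel, the equation $x = \tau(x)$ for $x = \sum_{a \in B(A)} a \otimes c_a \in \TT$ (with $B(A)$ an $\Fq$-basis of $A$) becomes $c_a = c_a^q$ for every $a$, forcing $c_a \in \Fq$. Since convergence of the series amounts to $|c_a| \to 0$ and all nonzero elements of $\Fq$ have absolute value $1$, only finitely many $c_a$ can be nonzero, so $x \in A \otimes \Fq = A$. Conversely every element of $A$ is trivially $\tau$-fixed. Thus $\ker(\Delta) = \TT^\tau = A$.

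For surjectivity, given $y \in \TT$, I would split $y = y_0 + y_1$ where $y_0 \in \CA$ is a finite sum (collecting the finitely many basis terms with $|c_a| \geq 1$) and $y_1 \in \TT$ satisfies $\norm{y_1} < 1$. The small part is handled by the geometric-type series $x_1 := \sum_{n \geq 0} \tau^n(y_1)$. Since $\tau$ acts on $\CI$ by the $q$-th power and is an isometry on $A$-basis terms, one has $\norm{\tau^n(y_1)} = \norm{y_1}^{q^n} \to 0$, so the series converges in $\TT$, and a telescoping argument gives $\Delta(x_1) = x_1 - \tau(x_1) = y_1$.

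For $y_0 = \sum_{i=1}^k a_i \otimes c_i$ (finite sum), I would solve componentwise: for each coefficient $c_i \in \CI$, the polynomial $T^q - T + c_i \in \CI[T]$ has a root $\xi_i \in \CI$ because $\CI$ is algebraically closed. Setting $x_0 := \sum_{i=1}^k a_i \otimes \xi_i \in \CA \subseteq \TT$, one computes $\Delta(x_0) = \sum a_i \otimes (\xi_i - \xi_i^q) = \sum a_i \otimes c_i = y_0$. Then $x := x_0 + x_1$ satisfies $\Delta(x) = y$.

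The only subtle point is the convergence of the series $\sum_{n \geq 0} \tau^n(y_1)$, which relies crucially on $\norm{y_1} < 1$; this is precisely why the decomposition $y = y_0 + y_1$ is needed (the algebraically closed root-extraction handles the bounded-norm obstruction, and the ultrametric geometric series handles the tail). No single one of the two techniques would suffice on its own, which is the only genuine obstacle.
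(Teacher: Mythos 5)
Your proof is correct and follows essentially the same route as the paper: both arguments split off the finitely many coefficients of absolute value $\geq 1$ (handled by arbitrary roots of the Artin--Schreier equation $X-X^q=d$, using that $\CI$ is algebraically closed) and then treat the small remainder via the contracting behaviour of $\tau$ on coefficients of norm $<1$. The only difference is presentational: the paper chooses, for each small coefficient $d_i$, a root $c_i$ of $X-X^q-d_i=0$ with $|c_i|=|d_i|$ and merely asserts its existence, whereas your global series $x_1=\sum_{n\geq 0}\tau^n(y_1)$ produces exactly these roots coefficientwise ($c_a=\sum_{n}d_a^{q^n}$) and thereby supplies the justification the paper leaves implicit; likewise your kernel computation spells out what the paper declares ``clear''.
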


\begin{proof}
$\Ker(\Delta)=\TT^\tau=A$ is clear. Every $y\in \TT$ can be written as a convergent series $y=\sum_{i \geq 0} a_i\otimes d_i$.
Since $\CI$ is algebraically closed, each equation $X-X^q-d_i=0$ has $q$ solutions in $\CI$, and for all $d_i\in \CI$ with $\norm{d_i}<1$, one solution $c_i\in \CI$ satisfies $\norm{c_i}= \norm{d_i}$.
Choosing these solutions (and any solution when $\norm{d_i}\geq 1$), we have $x=\sum_{i\geq 0}a_i\otimes c_i\in \TT$, and we obtain
\[ \Delta(x)=\sum_{i\geq 0}a_i\otimes c_i-\sum_{i\geq 0}a_i\otimes c_i^q = \sum_{i\geq 0}a_i\otimes (c_i-c_i^q)
=\sum_{i\geq 0}a_i\otimes d_i=y.\]
Hence, $\Delta$ is surjective.
\end{proof}

\begin{proof}[Proof of \eqref{item:cond-rat-and-loc-free}$\Rightarrow$\eqref{item:cond-E-unif}]
Let $K_\infty$ be the completion of $K$ at the infinite place. The split exact sequence of $A$-modules
\[0\to A \to K_\infty\to K_\infty/A\to 0\]
(which is split exact as a sequence of $\Fq$-vector spaces) induces a short exact sequence of $(\CA)\{\tau\}$-modules
\[  0\to \Hom^c(K_\infty/A, \CI)\to \Hom^c(K_\infty, \CI)\to \Hom^c(A, \CI) \to 0. \]
Here $\Hom^c(-,\CI)$ denotes the space of continuous homomorphisms of $\Fq$-vector spaces,

In the following, we abbreviate $R=(\CA)\{\tau\}$.

By applying the functor $\Hom_{R}(\mot, -)$ to that sequence, we obtain a left exact sequence
\begin{equation}\label{eq:left-exact-sequence}
0 \to \Hom_{R}(\mot, \Hom^c(K_\infty/A, \CI) )\to \Hom_{R}(\mot, \Hom^c(K_\infty, \CI))\to \Hom_{R}(\mot, \Hom^c(A, \CI))
\end{equation}
This sequence is indeed isomorphic to the sequence $0\to \Lambda_E\to \LieE\xrightarrow{\e} E(\CI)$ in the sense that we have a commutative diagram
\[\xymatrix{
0 \ar[r] & \Lambda_E \ar[r]\ar[d]^{f_1|_{\Lambda_E}} & \LieE \ar[r]^{\e} \ar[d]^{f_1} & E(\CI) \ar[d]^{f_2}\\
0 \ar[r] & \Hom_{R}(\mot, \Hom^c(K_\infty/A, \CI) )\ar[r] & \Hom_{R}(\mot, \Hom^c(K_\infty, \CI))\ar[r]  & \Hom_{R}(\mot, \Hom^c(A, \CI)) 
}\]
where the vertical arrows are isomorphisms, namely given by
\[ f_1(\lambda)(m)(c) = m(\e(\dphi_c \lambda))\quad \forall c\in K_\infty, m\in \mot,\lambda\in \LieE \]
and
\[ f_2(e)(m)(a) = m(\phi_a e)\qquad \forall a\in A,m\in \mot, e\in E(\CI). \]
Commutativity of the diagram is easily verified. The map $f_2$ is an isomorphism, because it is the composition of the isomorphisms
$E(\CI)\to \Hom_{L\{\tau\}}(\mot,\CI)$ (used already in the proof of Theorem \ref{thm:h-isom-M-tate-dual}) and the isomorphism
$\Hom_{L\{\tau\}}(\mot,\CI)\to \Hom_{R}(\mot, \Hom^c(A, \CI))$ sending a homomorphism $g$ to the homomorphism $f_g$ defined by
\[  f_g(m)(a)=g(am) \quad \forall a\in A,m\in \mot. \]

For verifying that $f_1|_{\Lambda_E}$ is an isomorphism and -- by exactness of the rows -- also $f_1$ is an isomorphism, we first remark that
the map
\[ \alpha: \Omega_{A/\Fq}\otimes_A \TT \to \Hom^c(K_\infty/A, \CI), \omega\otimes (\sum_i a_i\otimes c_i)\mapsto f_\omega \]
where
\[ f_\omega(\bar{x})=\sum_i \tr_{\FI/\Fq}(\res_\infty(a_ix\omega))\cdot c_i\]
is an isomorphism of $R=\LA\{\tau\}$-modules. Therefore, $f_1|_{\Lambda_E}$ factors as
\[ \Lambda_E\to \Hom_{R}(\mot, \Omega_{A/\Fq}\otimes_A \TT)\to \Hom_{R}(\mot, \Hom^c(K_\infty/A, \CI)),\]
and it is not hard to see that the first map is obtained from the isomorphism
$\iota\circ \delta_u:\fd_{A/\Fq[u]}\cdot \Lambda_E \to \Hom_{\LA}^\tau(\mot,\TT)$ by tensoring with $\Omega_{A/\Fq}$.

\medskip

For showing that $\e$ is surjective, we have to show that the sequence \eqref{eq:left-exact-sequence} is also right exact. By cohomology theory, the sequence \eqref{eq:left-exact-sequence} continues to a long exact sequence with the terms
\[ \ldots \to \Ext^1_{R}(\mot, \Hom^c(K_\infty/A, \CI))\xrightarrow{\gamma} \Ext^1_{R}(\mot, \Hom^c(K_\infty, \CI))\to \ldots \]
We are done, if we can show that $\gamma$ is injective. In particular, this is true, if we can show that \linebreak $\Ext^1_{R}(\mot, \Hom^c(K_\infty/A, \CI))=0$.

\medskip

One has 
\begin{eqnarray*}
 \Ext^1_{R}(\mot, \Hom^c(K_\infty/A, \CI))&\cong &\Ext^1_{R}(\mot, \Omega_{A/\Fq}\otimes_A \TT)\\
 &\cong &
\Coker\left(\Hom_R(F_0,\Omega_{A/\Fq}\otimes_A \TT)\to \Hom_R(F_1,\Omega_{A/\Fq}\otimes_A \TT)\right),
\end{eqnarray*}
where $0\to F_1\to F_0\to \mot\to 0$ is any $R$-resolution of $\mot$ with $F_0$ a projective $R$-module.
As $\Omega_{A/\Fq}\otimes_A \TT$ is even a module over $Q=\TT\{\tau\}\cong \TT\otimes_{\LA} R$, we have
\begin{eqnarray*}
\Ext^1_{R}(\mot, \Omega_{A/\Fq}\otimes_A \TT)&\cong &
\Coker\left(\Hom_R(F_0,\Omega_{A/\Fq}\otimes_A \TT)\to \Hom_R(F_1,\Omega_{A/\Fq}\otimes_A \TT)\right)\\
&\cong & \Coker\left(\Hom_Q(Q\otimes_R F_0,\Omega_{A/\Fq}\otimes_A \TT)\to \Hom_Q(Q\otimes_R F_1,\Omega_{A/\Fq}\otimes_A \TT)\right)\\
&\cong & \Ext^1_{Q}(\mot_\TT, \Omega_{A/\Fq}\otimes_A \TT),
\end{eqnarray*}
as
\[ 0\to Q\otimes_R F_1\to Q \otimes_R F_0\to \mot_{\TT}\to 0\]
is a $Q$-resolution of the $Q$-module $\mot_{\TT}$ with $Q\otimes_R F_0$ a projective $Q$-module.

However, $\Ext^1_Q$ does not depend on the chosen $Q$-resolution of $\mot_\TT$.

By Lemma \ref{lem:generated-by-invariants}, hypothesis \eqref{item:cond-rat-and-loc-free} implies that also
$\mot_\TT\cong \TT\otimes_A (\mot_\TT)^\tau$, and $(\mot_\TT)^\tau$ is a projective $A$-module.
So, another projective $Q$-resolution is given by
\[ 0\to Q\otimes_A (\mot_\TT)^\tau \xrightarrow{\alpha} Q\otimes_A (\mot_\TT)^\tau \to \TT\otimes_A (\mot_\TT)^\tau\cong \mot_\TT\to 0,\]
where the first map $\alpha$ is given by $q\otimes m\mapsto q(1-\tau)\otimes m$, and the second map is given by
$q\otimes m\mapsto q|_{\tau=1}\otimes m= q\cdot m \in \mot_\TT$.

Hence,
\begin{eqnarray*}
 &&\Ext^1_{Q}(\mot_\TT, \Omega_{A/\Fq}\otimes_A \TT)\\ &&\quad \cong \Coker\left(\Hom_Q(Q\otimes_A (\mot_\TT)^\tau,\Omega_{A/\Fq}\otimes_A \TT)\xrightarrow{\alpha^*} \Hom_Q(Q\otimes_A (\mot_\TT)^\tau,\Omega_{A/\Fq}\otimes_A \TT) \right)\\
 &&\quad \cong \Coker\left(\Hom_A((\mot_\TT)^\tau,\Omega_{A/\Fq}\otimes_A \TT)\xrightarrow{\beta} \Hom_A( (\mot_\TT)^\tau,\Omega_{A/\Fq}\otimes_A \TT) \right)
\end{eqnarray*}
where
\[ \beta(f)(m)=(1-\tau)f(m) \quad \forall f\in \Hom_A((\mot_\TT)^\tau,\Omega_{A/\Fq}\otimes_A \TT), m\in (\mot_\TT)^\tau. \]
However, by Lemma \ref{lem:Delta-surjective}, the map $\Delta:\TT\to \TT, x\mapsto \Delta(x):=(1-\tau)(x)=x-\tau(x)$ is surjective, and $(\mot_\TT)^\tau$ is a projective $A$-module, and hence $\beta$ is also surjective, i.e. the cokernel of $\beta$ is $0$.
\end{proof}

\begin{rem}
We do not know whether all four conditions in Theorem \ref{thm:conditions-for-uniformizability} are equivalent.
However, the gap is not big.

The condition that $\dual{(\mot_\TT)}$ has maximal rank $r=\dim_\LL(\mot_\LL)$ implies that 
$\mot_\TT/\mot_\TT^{tor}$ is finitely generated. Since $\TT$ is a Dedekind domain, we conclude that $\mot_\TT/\mot_\TT^{tor}$ is locally free.

So the difference between condition \eqref{item:cond-rat-and-loc-free} and \eqref{item:cond-rat} is that for the latter, $\mot_\TT$ might have torsion.

Along the lines of the proof above, the critical point in showing \eqref{item:cond-rat}$\Rightarrow$\eqref{item:cond-E-unif} is to show that
\[  \Ext^1_{R}(\mot, \Hom^c(K_\infty/A, \CI))\xrightarrow{\gamma} \Ext^1_{R}(\mot, \Hom^c(K_\infty, \CI)) \]
is injective, even when $\mot_\TT$ has torsion.

On the other hand, if one could show that this map is not injective, whenever $\mot_\TT$ has torsion, this would show the equivalence of \eqref{item:cond-rat-and-loc-free} and \eqref{item:cond-E-unif}.
\end{rem}

\begin{exmp}\label{exmp:t-module-with-t-division-part3}
We consider the $t$-module from Example \ref{exmp:t-module-with-t-division}.
We have seen that $\rk(\mot)=1$, and that $\mot$ is torsion free, but it has infinite $t$-division. So, also $\mot_\TT$ has infinite $t$-division, and hence $\dual{(\mot_\TT)}=\Hom_{\TT}(\mot_\TT,\TT)=0$.
By Theorem \ref{thm:conditions-for-uniformizability}, we conclude that $E$ is not uniformizable.
\end{exmp}

\section{Rigid analytic trivializations and Galois representations}

In this section, we introduce rigid analytic trivializations of $A$-motives associated to (possibly non-abelian) Anderson $A$-modules.
This notion will be a generalization of the rigid analytic trivializations in the abelian case where $\mot$ is a free $\LA$-module.

These rigid analytic trivializations are used to give a description of the  $\pfrak$-adic Galois representations. 

\subsection{Rigid analytic trivializations}

Since in the non-abelian case, the motive is not finitely generated, we have to consider localizations thereof. Such localizations are also necessary in the abelian case, when the motive is not free, but only locally free.

\begin{defn}
Let $F\subset \LA$ be a $\tau$-stable multiplicative subset such that $\mot[F^{-1}]=(\LA[F^{-1}])\otimes_{\LA} \mot$ is a free $\LA[F^{-1}]$-module of rank $r=\rk(\mot)$.\footnote{Theorem \ref{thm:rational-A-motive-f-dim} guarantees that such a multiplicative subset exists.}
A \emph{rigid analytic trivialization} for $\mot[F^{-1}]$ with respect to some $\LA[F^{-1}]$-basis $\transp{(m_1,\ldots,m_r)}$ of 
$\mot[F^{-1}]$ is a matrix $\Upsilon\in \GL_r(\TT[F^{-1}])$ satisfying
\[ \tau\left(\Upsilon\cdot  \svect{m}{r}\right) = \Upsilon\cdot  \svect{m}{r}.\]
\end{defn}

\begin{rem}\label{rem:rigid-analytic-trivialization}
If $\Upsilon\in \GL_r(\TT[F^{-1}])$ is a rigid analytic trivialization with respect to $\transp{(m_1,\ldots,m_r)}$, the basis
$\Upsilon\cdot \svect{m}{r}$ is a basis of $\tau$-invariant elements of $\mot_{\TT[F^{-1}]}=\TT[F^{-1}]\otimes_{(\LA[F^{-1}])}\mot[F^{-1}]$ by definition.
On the other hand, if $\mot_{\TT[F^{-1}]}$ has a basis of $\tau$-invariant elements, the base change matrix from that basis to $\transp{(m_1,\ldots,m_r)}$ is a rigid analytic trivialization. Hence, the existence of a rigid analytic trivialization $\Upsilon\in \GL_r(\TT[F^{-1}])$ is equivalent to the condition that the $\TT[F^{-1}]$-module $\mot_{\TT[F^{-1}]}$ has a basis of $\tau$-invariant elements. 
\end{rem}

\begin{thm}\label{thm:rigid-analytic-trivialization}
Let $F\subset \LA$ be a $\tau$-stable multiplicative subset such that $\mot[F^{-1}]=(\LA[F^{-1}])\otimes_{\LA} \mot$ is a free $\LA[F^{-1}]$-module of rank $r=\rk(\mot)$.
If $E$ is uniformizable, there exists a rigid analytic trivialization for $\mot[F^{-1}]$.
\end{thm}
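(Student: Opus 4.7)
The plan is to translate the rigid analytic triviality of the dual $\dual{(\mot_\TT)}$, which uniformizability of $E$ provides via Theorem \ref{thm:conditions-for-uniformizability}, into rigid analytic triviality of $\mot_{\TT[F^{-1}]}$ itself, using the duality equivalence of Lemma \ref{lem:generated-by-invariants}.

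First I would observe that $\mot[F^{-1}]$ is a dualizable difference module. The construction of $F$ in the proof of Theorem \ref{thm:rational-A-motive-f-dim} was arranged precisely so that after inverting $f$ and all its $\tau$-twists, the leading $\tau$-coefficient matrix becomes invertible, which is equivalent to $\tau$ being surjective on $\mot[F^{-1}]$. Thus on any $\LA[F^{-1}]$-basis $(m_1,\ldots,m_r)$ of $\mot[F^{-1}]$, the matrix $\Phi$ describing the $\tau$-action lies in $\GL_r(\LA[F^{-1}])$, verifying condition \eqref{item:Phi-invertible} of Lemma \ref{lem:N-dualizable}, and the same holds after base change to $\TT[F^{-1}]$. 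By Remark \ref{rem:rigid-analytic-trivialization} together with the equivalence \eqref{item:N-tau-generated}$\Leftrightarrow$\eqref{item:N-tau-dual} of Lemma \ref{lem:generated-by-invariants}, it now suffices to produce a $\TT[F^{-1}]$-basis of $\dual{(\mot_{\TT[F^{-1}]})}$ consisting of $\tau$-invariant elements.

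For this I would invoke Theorem \ref{thm:conditions-for-uniformizability}\eqref{item:cond-rat}: uniformizability yields the isomorphism $\TT\otimes_A(\dual{(\mot_\TT)})^\tau\xrightarrow{\cong}\dual{(\mot_\TT)}$, and combined with the identification $(\dual{(\mot_\TT)})^\tau\cong\fsf(E)$ from Theorem \ref{thm:h-isom-M-tate-dual}, this shows that $\fsf(E)$ is a finitely generated projective $A$-module of rank $r$. Any $h\in\fsf(E)$ gives a $\tau$-equivariant $\LA$-linear map $\mot\to\TT$, which by the universal property of localization extends uniquely to a $\tau$-equivariant $\LA[F^{-1}]$-linear map $h':\mot[F^{-1}]\to\TT[F^{-1}]$. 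I would pick $r$ elements $h_1,\ldots,h_r\in\fsf(E)$ whose images form a basis of $\Quot(A)\otimes_A\fsf(E)$ over $\Quot(A)$ and form the matrix $Y=(h_i(m_j))\in\Mat_r(\TT[F^{-1}])$. A direct computation using $\tau(h_i(m_j))=h_i(\tau(m_j))$ gives $\tau(Y)=Y\,\transp{\Phi}$, so once $Y\in\GL_r(\TT[F^{-1}])$ is established, the matrix $\Upsilon:=(\transp{Y})^{-1}$ will satisfy $\tau(\Upsilon)=\Upsilon\Phi^{-1}$, providing the desired rigid analytic trivialization for $\mot[F^{-1}]$.

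The main obstacle is showing $\det(Y)\in(\TT[F^{-1}])^\times$. Over $\Quot(\TT)$, which contains $\Quot(A)$ via $A\hookrightarrow \TT$, base change of the $\TT$-isomorphism shows that $h_1,\ldots,h_r$ form a $\Quot(\TT)$-basis of $\dual{(\mot_{\Quot(\TT)})}$, so $\det(Y)\in\Quot(\TT)^\times$ is automatic. Upgrading this to invertibility in $\TT[F^{-1}]$ amounts to verifying that the natural $\TT[F^{-1}]$-linear map $\TT[F^{-1}]\otimes_A\fsf(E)\to\dual{(\mot_{\TT[F^{-1}]})}$ is an isomorphism; this is delicate because $\mot_\TT$ need not be finitely presented over $\TT$. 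Both sides are locally free of rank $r$ over $\TT[F^{-1}]$ (the source by projectivity of $\fsf(E)$, the target by freeness of $\mot_{\TT[F^{-1}]}$), and injectivity follows from flatness of $\TT\to\TT[F^{-1}]$ applied to the $\TT$-isomorphism. Surjectivity can then be checked locally at each prime of $\TT[F^{-1}]$, reducing via Zariski covers of $\Spec A$ on which $\fsf(E)$ is free to the comparison between two free modules of the same rank, together with the fact that $\mot[F^{-1}]$ is already finitely presented over $\LA[F^{-1}]$.
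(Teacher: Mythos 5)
Your overall route is the same as the paper's: uniformizability gives, via Theorem \ref{thm:conditions-for-uniformizability}, that $\dual{(\mot_\TT)}$ is generated over $\TT$ by its $\tau$-invariants $\fsf(E)$, and one transfers this to $\mot_{\TT[F^{-1}]}$ through the duality of Lemma \ref{lem:generated-by-invariants}; you have merely unpacked the final step into the explicit matrix $Y=(h_i(m_j))$. However, two steps do not hold up. First, your opening claim that $\mot[F^{-1}]$ is dualizable because ``the construction of $F$ in Theorem \ref{thm:rational-A-motive-f-dim} makes the leading $\tau$-coefficient matrix invertible'' is not correct: that construction yields finite generation, not invertibility of the matrix $\Phi$ describing $\tau$ on a chosen basis (and the $F$ in the statement is in any case an arbitrary one making $\mot[F^{-1}]$ free). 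The paper's own Example \ref{exmp:t-module-with-t-division} refutes the mechanism you invoke: there $\mot[t^{-1}]$ is free of rank $1=\rk(\mot)$ with $\tau\kappa_1=t^{-1}(\theta^q-t)(\theta-t)\kappa_1$, and $(\theta-t)$ is not a unit in $\Fq(\theta)[t,t^{-1}]$ or in $\TT[t^{-1}]$. Dualizability of $\mot_{\TT[F^{-1}]}$ is a consequence of the theorem (it follows once $\det Y$ is a unit, since then $\det\Phi=\tau(\det Y)/\det Y$ is a unit), not something available as an input.

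Second, and more seriously, the crux you correctly isolate --- surjectivity of $\TT[F^{-1}]\otimes_A\fsf(E)\to\dual{(\mot_{\TT[F^{-1}]})}$, equivalently $\det(Y)\in(\TT[F^{-1}])^\times$ --- is not established by your sketch. An injective map between two locally free modules of the same finite rank over a domain need not be surjective (multiplication by a non-unit on $\TT[F^{-1}]$ itself), so ``comparison between two free modules of the same rank'' closes nothing. The genuine difficulty is that $\mot_\TT$ is not finitely presented over $\TT$, so $\Hom_\TT(\mot_\TT,-)$ need not commute with the localization at $F$: concretely, a $\TT[F^{-1}]$-linear functional on $\mot_{\TT[F^{-1}]}$ (e.g.\ a member of the dual basis to $(m_1,\ldots,m_r)$) need not have bounded $F$-denominators on the generators $\tau^n\kappa_j$ of $\mot_\TT$, so it is not a priori a $\TT[F^{-1}]$-combination of the $h_i$. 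The paper's own proof disposes of exactly this point by asserting the identification $\TT[F^{-1}]\otimes_\TT\dual{(\mot_\TT)}\cong\dual{(\mot_{\TT[F^{-1}]})}$, so you have at least located where the content sits; but to complete your version one must genuinely exploit the $\tau$-structure (for instance via the difference equation $\tau(\det Y)=\det(Y)\det(\Phi)$ and an argument excluding zeros of $\det Y$ away from the vanishing locus of $F$), not rank considerations alone.
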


\begin{proof}
By Theorem \ref{thm:conditions-for-uniformizability}, if $E$ is uniformizable, the space $\dual{(\mot_\TT)}$ has a basis of $\tau$-invariant elements.
Hence, the same holds for $\TT[F^{-1}]\otimes_\TT \dual{(\mot_\TT)} \cong \dual{(\mot_{\TT[F^{-1}]})}$. By Lemma \ref{lem:generated-by-invariants}, this implies that $\mot_{\TT[F^{-1}]}$ also has a basis of $\tau$-invariant elements.
\end{proof}

\begin{cor}\label{cor:existence-of-local-rat}
Let $E$ be uniformizable, and $\pfrak\subseteq A$ a prime ideal such that $\mot_{(\pfrak)}$ is a free $(\LA)_{(\pfrak)}$-module of rank $r=\rk(\mot)$. Then there exists a rigid analytic trivialization for $\mot_{(\pfrak)}$.
\end{cor}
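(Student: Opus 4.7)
The approach will be to realize $(\LA)_{(\pfrak)}$ as $\LA[F^{-1}]$ for a suitable $\tau$-stable multiplicative subset $F$, and then apply Theorem \ref{thm:rigid-analytic-trivialization} directly. I would set $F := \LA \setminus \bigcup_{\qfrak \mid \pfrak(\LA)} \qfrak$, the complement of the union of primes of $\LA$ dividing $\pfrak(\LA)$. The quotient $\LA/\pfrak(\LA) \cong \Fpp \otimes L$ is a product of finitely many finite separable field extensions of $L$, so there are only finitely many such primes $\qfrak$, all of them maximal. Since $\tau$ restricts to the identity on $A$, the ideal $\pfrak(\LA) = \pfrak \otimes L$ is $\tau$-stable; hence $\tau^{-1}$ sends the finite set $\{\qfrak \mid \pfrak(\LA)\}$ into itself and must permute it, making $F$ a $\tau$-stable multiplicative subset.

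Next I would identify $\LA[F^{-1}]$ with $(\LA)_{(\pfrak)}$. By standard prime avoidance applied to the finitely many maximal primes above $\pfrak(\LA)$, one has $\LA[F^{-1}] = \bigcap_{\qfrak \mid \pfrak(\LA)} (\LA)_{(\qfrak)} = (\LA)_{(\pfrak)}$ as subrings of $\Quot(\LA)$. In particular $\mot[F^{-1}] = \mot_{(\pfrak)}$, which is free of rank $r$ by hypothesis. A parallel argument shows that any prime $\qfrak_T$ of $\TT$ above $\pfrak\TT$ contracts to a prime of $\LA$ in the set $\{\qfrak \mid \pfrak(\LA)\}$, so every element of $F$ remains a unit in $\TT_{(\pfrak)}$, and one obtains a natural embedding $\TT[F^{-1}] \hookrightarrow \TT_{(\pfrak)}$.

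Finally I would invoke Theorem \ref{thm:rigid-analytic-trivialization} for this $F$: its hypotheses, namely $\tau$-stability of $F$ and freeness of $\mot[F^{-1}]$ of rank $r$, are established above, and uniformizability of $E$ is part of the assumption. The theorem produces a rigid analytic trivialization $\Upsilon \in \GL_r(\TT[F^{-1}])$, and the embedding $\TT[F^{-1}] \hookrightarrow \TT_{(\pfrak)}$ lets us view $\Upsilon$ inside $\GL_r(\TT_{(\pfrak)})$, giving the desired trivialization for $\mot_{(\pfrak)}$. I do not expect any real obstacle; the only work is in the bookkeeping of $\tau$-stability and in checking that the multiplicative localization at $F$ faithfully recovers the semi-local ring $(\LA)_{(\pfrak)}$, both of which follow from $\tau$-invariance of $\pfrak$ in $A$ together with the finiteness of the set of primes above $\pfrak(\LA)$.
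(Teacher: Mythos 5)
Your proposal is correct and follows essentially the same route as the paper: the paper's proof likewise observes that $(\pfrak)$ is $\tau$-stable, so that $(\LA)_{(\pfrak)}$ is a localization $\LA[F^{-1}]$ at a $\tau$-stable multiplicative set, and then invokes Theorem \ref{thm:rigid-analytic-trivialization}. You merely spell out the details (the explicit choice of $F$, prime avoidance, and the embedding $\TT[F^{-1}]\hookrightarrow \TT_{(\pfrak)}$) that the paper leaves implicit.
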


\begin{proof}
Since, $(\pfrak)$ is a $\tau$-stable ideal, $(\LA)_{(\pfrak)}$ is a localization of the form $\LA[F^{-1}]$ with a $\tau$-stable multiplicative subset $F$. So the statement directly follows from the previous theorem.
\end{proof}

\subsection{Galois representations}

It is well known that all torsion points of $E$ have coefficients in the separable closure $L^\sep$ of $L$ inside $\CI$ (cf.~Remark \ref{rem:torsion-in-Lsep}). As the action of the Galois group $\Gal(L^\sep/L)$ commutes with the action of $\phi$, every automorphism $\gamma\in \Gal(L^\sep/L)$ induces automorphisms on the Tate-modules.

This leads to the well-known Galois representations
\[ \varrho_\pfrak: \Gal(L^\sep/L)\to \Aut(T_\pfrak(E))\cong \GL_r(A_\pfrak), \]
 for any prime $\pfrak\subset A$ for which $\trk{\pfrak}(E)=r$, where the isomorphism $\Aut(T_\pfrak(E))\cong \GL_r(A_\pfrak)$ is given via some choice of $A_\pfrak$-basis of $T_\pfrak(E)$.

\medskip

In this section, we give an explicit description of these Galois representations in terms of a rigid analytic trivialization of its associated $A$-motive $\mot=\mot(E)$.

In the following, we use the natural extension of the canonical action of  $\Gal(L^\sep/L)$ on $L^\sep$  to actions on $(A\otimes E(L^\sep))_\pfrak$ and on $\Hom_{\LA}^\tau(\mot, (A\otimes L^\sep)_\pfrak)$. Namely, we extend the action to $A\otimes L^\sep$ and $A\otimes E(L^\sep)$ by acting on the second factor, and then continuously to $(A\otimes L^\sep)_\pfrak$, and to $(A\otimes E(L^\sep))_\pfrak$. The Galois action on $\Hom_{\LA}^\tau(\mot, (A\otimes L^\sep)_\pfrak)$ is then induced by the action on the codomain $(A\otimes L^\sep)_\pfrak$.

\begin{lem}\label{lem:compatibilities}
The following hold:
\begin{enumerate}
\item $\hat{H}_\pfrak\subset (A\otimes E(L^\sep))_\pfrak$, and $\hat{H}_\pfrak$ is stable under the action of  $\Gal(L^\sep/L)$,
\item $\Hom_{\LA}^\tau(\mot, (\CA)_\pfrak)= \Hom_{\LA}^\tau(\mot, (A\otimes L^\sep)_\pfrak)$,
\item the isomorphisms $\jp$ and $\ip$ are compatible with the action of $\Gal(L^\sep/L)$.
\end{enumerate}
\end{lem}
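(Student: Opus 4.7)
The plan is as follows. For part (1), I will exploit the explicit inverse of $\jp$ from the proof of Theorem \ref{thm:isom-jp}\eqref{item:general-isoms}: any $h\in \hat{H}_\pfrak$ corresponds to $f:=\jp(h)\in T_\pfrak(E)$ and takes the shape
\[ h \;=\; \sum_{j=0}^\infty \up{j}\sum_{i=1}^{\degp}\alpha_i\otimes f(\check{\alpha}_i\,\up{-j-1}), \]
where $\{\alpha_i\}$ and $\{\check{\alpha}_i\}$ are $\Fq$-bases of $\Fpp$ dual with respect to $\tr$. Since $f(\check{\alpha}_i\,\up{-j-1})\in E[\pfrak^{j+1}]\subseteq E(L^\sep)$, this exhibits $h$ as an element of $(A\otimes E(L^\sep))_\pfrak$. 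For the Galois stability, I will note that the $\Gal(L^\sep/L)$-action on $(A\otimes E(L^\sep))_\pfrak$ is trivial on $A$ (hence on $A_\pfrak\supseteq \Fpp$), and that $\phi_a$ is defined over $L$ and therefore commutes with every $\gamma\in\Gal(L^\sep/L)$; the defining conditions $\phi_a(h)=a\cdot h$ are consequently preserved by $\gamma$.

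For part (2), the inclusion $\Hom_{\LA}^\tau(\mot,(A\otimes L^\sep)_\pfrak)\subseteq \Hom_{\LA}^\tau(\mot,(\CA)_\pfrak)$ is trivial since $L^\sep\subseteq\CI$. For the converse, given $\mu\in \Hom_{\LA}^\tau(\mot,(\CA)_\pfrak)$, Theorem \ref{thm:h-isom-M-tate-dual} produces a preimage $h=\ip^{-1}(\mu)\in\hat{H}_\pfrak$ which, by part (1), admits an expansion $h=\sum_i \up{i}e_i$ with $e_i\in\Fpp\otimes E(L^\sep)$. The explicit formula for $\ip$ from the remark after Theorem \ref{thm:h-isom-M-tate-dual} then yields
\[ \mu(m) \;=\; \sum_i \up{i}\,(\id_{\Fpp}\otimes m)(e_i) \;\in\; (A\otimes L^\sep)_\pfrak \]
for every $m\in\mot$, because $m\in\Hom_{\vs}(E,\GG_{a,L})$ is defined over $L$.

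For part (3), I would simply track the action of $\gamma\in\Gal(L^\sep/L)$ through the constructions defining $\jp$ and $\ip$: each ingredient in these formulas — multiplication by elements of $A$, the residue map $\res_\pfrak$, the trace $\tr\colon\Fpp\to\Fq$, and evaluation $(\id\otimes m)$ of an $L$-rational homomorphism $m\in\mot$ — is $\gamma$-equivariant, because the $\Gal(L^\sep/L)$-action is concentrated on the $L^\sep$-factor of the second tensor slot and is trivial on everything else. This directly yields $\jp(\gamma\cdot h)=\gamma\cdot\jp(h)$ and $\ip(\gamma\cdot h)=\gamma\cdot\ip(h)$ for all $\gamma$ and $h\in\hat{H}_\pfrak$. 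The only real obstacle, and the step on which everything else hinges, is part (1): once one knows that the coefficients of $h\in \hat{H}_\pfrak$ already lie in $L^\sep$ rather than merely in $\CI$, both the very existence of a Galois action on $\hat{H}_\pfrak$ and the equivariance checks in parts (2) and (3) reduce to routine formal manipulations.
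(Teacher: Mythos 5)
Your proof is correct and follows essentially the same route as the paper's: both rest on the explicit formula for $\jp^{-1}$ (exhibiting $h\in\hat{H}_\pfrak$ with coefficients that are torsion points, hence in $E(L^\sep)$, since $T_\pfrak(E)=\Hom_A(K_\pfrak/A_\pfrak,E(L^\sep))$) and on the explicit formula for $\ip$ applied to $L$-rational $m\in\mot$, with the Galois equivariance checked directly on these formulas. The paper's proof is simply a more compressed version of the same argument.
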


\begin{proof}
Since, $T_\pfrak(E)=\Hom_A(K_\pfrak/A_\pfrak, E(\CI))=\Hom_A(K_\pfrak/A_\pfrak, E(L^\sep))$, 
the first point is clear from the explicit isomorphism $\jp$ given in Theorem \ref{thm:isom-jp}. The second point is then obtained from the explicit isomorphism $\ip$, and the compatibility with the action of $\Gal(L^\sep/L)$ is also easily verified using these explicit isomorphisms.
\end{proof}

\begin{thm}\label{thm:galois-representation}
Let $\pfrak$ be a prime ideal of $A$ such that $\mot$ is free at $\pfrak$. Assume that there exists a rigid analytic trivialization $\Upsilon\in \GL_r(\TT_{(\pfrak)})$  of $\mot_{(\pfrak)}$ with respect to some $(\LA)_{(\pfrak)}$-basis $\transp{(m_1,\ldots,m_r)}$ of $\mot_{(\pfrak)}$.\footnote{By Corollary \ref{cor:existence-of-local-rat}, this holds for example if $E$ is uniformizable.} 

Using the embedding $\TT_{(\pfrak)}\to (\CA)_\pfrak$, we consider $\Upsilon\in \GL_r((\CA)_\pfrak)$.
\begin{enumerate}
\item The coefficients of $\Upsilon$ lie in $(\LsepA)_\pfrak$.
\item \label{item:explicit-representation} With respect to an appropriate choice of basis of $T_{\pfrak}(E)$,
the $\pfrak$-adic Galois representation $\varrho_\pfrak$ attached to $E$ is given by
\begin{eqnarray*}
\varrho_\pfrak:\Gal(L^{\sep}/L) &\to& \GL_r(A_\pfrak) \\
\gamma &\mapsto & \Upsilon\cdot \gamma(\Upsilon)^{-1}.
\end{eqnarray*} 

\end{enumerate}
\end{thm}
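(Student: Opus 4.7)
The plan is to transfer the rigid analytic trivialization $\Upsilon$ through the isomorphisms $\jp\colon \hat{H}_\pfrak \to T_\pfrak(E)$ and $\ip\colon \hat{H}_\pfrak \to \Hom_{\LA}^\tau(\mot, (\CA)_\pfrak)$ of Section~\ref{sec:homomorphisms}, and then to read off the Galois action from the motive side. I set $n_i := \sum_j \Upsilon_{ij} m_j$, which forms a $\tau$-invariant basis of $\mot_{\TT_{(\pfrak)}}$ and, via $\TT_{(\pfrak)} \hookrightarrow (\CA)_\pfrak$, also a $(\CA)_\pfrak$-basis of $\tilde{\mot}_\pfrak$. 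Its dual basis $\{n_1^\vee,\ldots,n_r^\vee\}$ is automatically $\tau$-equivariant (Lemma~\ref{lem:N-dualizable}); I put $h_i := \ip^{-1}(n_i^\vee)$ and $f_i := \jp(h_i)$. Since $\tilde{\mot}_\pfrak$ is a dualizable difference module by Corollary~\ref{cor:mots-dualizable}, Lemma~\ref{lem:generated-by-invariants} gives that $\{n_i^\vee\}$ is an $A_\pfrak$-basis of $\Hom_{\LA}^\tau(\mot,(\CA)_\pfrak)$, so $\{f_i\}$ is an $A_\pfrak$-basis of $T_\pfrak(E)$; this is the ``appropriate'' basis in the statement.

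For part~(1), I extract the entries of $\Upsilon^{-1}$ by duality: from $m_j = \sum_i (\Upsilon^{-1})_{ji}\, n_i$ one reads off $(\Upsilon^{-1})_{ji} = n_i^\vee(m_j) = \ip(h_i)(m_j)$. Lemma~\ref{lem:compatibilities} places $h_i$ in $(A\otimes E(L^\sep))_\pfrak$, so writing $h_i = \sum_k \up{k} e_k^{(i)}$ with $e_k^{(i)} \in \Fpp\otimes E(L^\sep)$ and applying the $L$-morphism $m_j\colon E\to \GG_{a,L}$ coefficient-wise yields $\ip(h_i)(m_j) \in (\LsepA)_\pfrak$. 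Hence $\Upsilon^{-1} \in \Mat_r((\LsepA)_\pfrak)$. To upgrade to $\Upsilon \in \Mat_r((\LsepA)_\pfrak)$, I apply the cofactor formula $\Upsilon = \det(\Upsilon^{-1})^{-1}\mathrm{adj}(\Upsilon^{-1})$. The adjugate already has entries in $(\LsepA)_\pfrak$, and $\det(\Upsilon^{-1})\in (\LsepA)_\pfrak$ is a unit in $\TT_{(\pfrak)}$, so its evaluation at $\pfrak$ is a unit in $\Fpp\otimes \CI$. Since $L^\sep$ is separably closed and contains $\Fpp$, both $\Fpp\otimes L^\sep$ and $\Fpp\otimes\CI$ decompose as finite products of copies of $L^\sep$ and $\CI$ respectively, so invertibility is equivalent across the inclusion; the same Hensel-type argument as in Lemma~\ref{lem:invertible-elements} then shows $\det(\Upsilon^{-1})\in ((\LsepA)_\pfrak)^\times$, finishing part~(1).

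For part~(2), the Galois action on $h_i\in \hat{H}_\pfrak$, transported via $\ip$, becomes post-composition by $\gamma$ on the codomain $(\LsepA)_\pfrak \subseteq (\CA)_\pfrak$, now meaningful by part~(1). Applied to $n_i^\vee(m_j) = (\Upsilon^{-1})_{ji}$ and combined with the dual change of basis $m_j^\vee = \sum_i \Upsilon_{ij}\, n_i^\vee$, an elementary linear-algebra calculation yields
\[
\gamma\cdot n_i^\vee \;=\; \sum_k \bigl(\Upsilon\,\gamma(\Upsilon)^{-1}\bigr)_{ki}\, n_k^\vee.
\]
By Galois equivariance of $\ip$ and $\jp$ (Lemma~\ref{lem:compatibilities}), the same matrix describes $\varrho_\pfrak(\gamma)$ in the basis $\{f_i\}$. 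A final sanity check confirms $\Upsilon\gamma(\Upsilon)^{-1}\in \GL_r(A_\pfrak)$: the functional equation $\tau(\Upsilon) = \Upsilon\Phi^{-1}$ (where $\Phi\in \GL_r((\LA)_{(\pfrak)})$ is the matrix of $\tau$ on $\mot_{(\pfrak)}$ in the chosen basis, and is Galois-fixed since its entries lie in $\LA$) combined with $\tau\gamma = \gamma\tau$ shows that $\Upsilon\gamma(\Upsilon)^{-1}$ is $\tau$-invariant, hence its entries lie in $((\LsepA)_\pfrak)^\tau = A_\pfrak$.

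The principal obstacle is the upgrade in part~(1) from $\Upsilon^{-1}\in \Mat_r((\LsepA)_\pfrak)$ to $\Upsilon\in \Mat_r((\LsepA)_\pfrak)$: this is not formal, since inversion of matrices over $\TT_{(\pfrak)}$ need not preserve the subring $(\LsepA)_\pfrak$, and it hinges on showing that an element of $(\LsepA)_\pfrak$ which is a unit in $\TT_{(\pfrak)}$ is already a unit in $(\LsepA)_\pfrak$. Everything else, including the matrix manipulations in part~(2), is straightforward once the compatibilities of the isomorphisms $\ip$, $\jp$ with the $\tau$- and $\Gal(L^\sep/L)$-actions are kept track of.
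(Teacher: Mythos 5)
Your proof is correct and follows essentially the same route as the paper: transport the $\tau$-invariant basis $\Upsilon\cdot\transp{(m_1,\ldots,m_r)}$ and its dual basis through the isomorphisms $\ip$ and $\jp$, note that the $\mu_j$ (resp.\ $m_j^\vee$) are Galois-fixed, and read off the representing matrix $\Upsilon\cdot\gamma(\Upsilon)^{-1}$. Your explicit cofactor-plus-unit argument upgrading $\Upsilon^{-1}\in\Mat_r((\LsepA)_\pfrak)$ to $\Upsilon\in\Mat_r((\LsepA)_\pfrak)$ is a welcome addition, as it fills in a step the paper passes over with an ``in particular''.
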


\begin{rem}
\begin{enumerate}
\item The formula implicitly states that for all $\gamma\in \Gal(L^{\sep}/L)$, the product $\Upsilon\cdot \gamma(\Upsilon)^{-1}\in \GL_r((\CA)_\pfrak)$ has coefficients in $A_\pfrak$ which is the ring of invariants of $(\CA)_\pfrak$. Using that $\Upsilon$ satisfies the difference equation
\[ \tau(\Upsilon)=\Upsilon \Phi^{-1}, \]
where $\Phi\in \GL_{r}((\LA)_{(\pfrak)})$ is given by $\tau(\vect{m})=\Phi \vect{m}$, we can verify this by a small computation:
\begin{eqnarray*}
\tau(\Upsilon\cdot \gamma(\Upsilon)^{-1}) &=& \tau(\Upsilon)\cdot \tau(\gamma(\Upsilon))^{-1} 
= \tau(\Upsilon)\cdot \gamma(\tau(\Upsilon))^{-1}, \quad \text{since $\gamma$ commutes with $\tau$, }\\
&=& \Upsilon \Phi^{-1}\cdot \gamma\left( \Upsilon \Phi^{-1}\right)^{-1} 
= \Upsilon \Phi^{-1} \cdot \gamma(\Phi)\cdot \gamma(\Upsilon)^{-1} \\
&=& \Upsilon\cdot \gamma(\Upsilon)^{-1}, \qquad\quad \text{since $\Phi$ has coefficients in $(\LA)_{(\pfrak)}\subseteq (\LA)_\pfrak$.} 
\end{eqnarray*} 
\item We obtain the formulas in \cite[Proposition~5.1]{am-rp:tcagfdte} by evaluating further at a prime in $\CA$ dividing $\pfrak$.
Be aware that the notation there uses the same symbol but for slightly different things: $\Upsilon$ here is $(\transp{\Upsilon})^{-1}$ there.
\end{enumerate}
\end{rem}

\begin{proof}
By Lemma \ref{lem:compatibilities}, the isomorphisms $\jp$ and $\ip$ are compatible with the action of $\Gal(L^\sep/L)$. Hence, the action on $T_\pfrak(E)$ and the action on $\Hom_{\LA}^\tau(\mot, (A\otimes L^\sep)_\pfrak)$ provide the same Galois representations with respect to corresponding bases.

Let $(\mu_1,\ldots, \mu_r)$ be the basis of $\Hom_{\LA}(\mot_{(\pfrak)},(\LA)_{(\pfrak)})$ dual to $\transp{(m_1,\ldots,m_r)}$. Then
$(\mu_1,\ldots, \mu_r)\cdot \Upsilon^{-1}$ is an $A_{(\pfrak)}$-basis of $\Hom_{\LA}^\tau(\mot, \TT_{(\pfrak)})$, and hence an $A_\pfrak$-basis of $\Hom_{\LA}^\tau(\mot, (A\otimes \CI)_\pfrak)=\Hom_{\LA}^\tau(\mot, (A\otimes L^\sep)_\pfrak)$. In particular, the coefficients of $\Upsilon$ lie in $(A\otimes L^\sep)_\pfrak$.

Finally, the image of the basis $(\mu_1,\ldots, \mu_r)\cdot \Upsilon^{-1}$ under $\gamma\in \Gal(L^\sep/L)$ is
\begin{eqnarray*} \gamma\bigl((\mu_1,\ldots, \mu_r)\cdot \Upsilon^{-1}\bigr) &=& (\mu_1,\ldots, \mu_r)\cdot \gamma(\Upsilon)^{-1} \\
&=& (\mu_1,\ldots, \mu_r)\cdot \Upsilon^{-1}\cdot \left( \Upsilon \cdot \gamma(\Upsilon)^{-1}\right) .
\end{eqnarray*}
This shows the claim.
\end{proof}

\begin{rem}
Assume that $F\subseteq \LA$ is a multiplicative subset such that $\mot[F^{-1}]$ is a free $\LA[F^{-1}]$-module of rank $r=\rk(\mot)$, and that there exists a rigid analytic trivialization for $\mot[F^{-1}]$. Then this rigid analytic trivialization can be used in the previous theorem for every prime $\pfrak\subset A$ for which $F$ has empty intersection with $(\pfrak)$.
In particular in the abelian case, if $\mot$ is free (e.g.~for $A=\Fq[t]$), a rigid analytic trivialization $\Upsilon\in \GL_r(\TT)$ can be used for all primes $\pfrak$.
\end{rem}

\begin{rem}
In Remark \ref{rem:up-expansion-by-Taylor-series}, we have seen that the embedding $\TT_{(\pfrak)}\to (\CA)_\pfrak$ can be explicitly given via the Taylor series expansion at $\pfrak$ by replacing $X$ by the uniformizer $\up{}$, i.e.
\[h \mapsto \sum_{n\geq 0} \up{n}\cdot\hd{\up{}}{n}{h}(\pfrak).\]
\end{rem}

\subsection{Image of the Galois representation as subgroup of the motivic Galois group}

We end this paper, by the generalization of \cite[Theorem 5.2]{am-rp:tcagfdte}. That means, we use the explicit description of the Galois representations in the previous subsection to directly show that for a uniformizable Anderson module $E$, the image of the $\pfrak$-adic Galois representation lies in the motivic Galois group, i.e.~in the Tannakian Galois group $\Gamma_{\mot}$ of the rational $A$-motive 
$\mot_Q=Q\otimes_{\LA} \mot(E)$ where $Q=\Quot(\LA)$ is the field of fraction of $\LA$.

For achieving this, we use the same characterization of the Galois group $\Gamma_\mot$ as in \cite[Sect.~4.2]{mp:tdadmaicl}, but for the $A$-motive and not the dual $A$-motive.
More recent results in difference Galois theory (e.g.~in the work of Bachmayr \cite[Sect.2]{am:dvnt}), and for more general transcendental Galois theories -- so called  Picard-Vessiot theories -- (see \cite{am:capvt}) provide the necessary theoretical background. 

\medskip

So let $E$ be a uniformizable Anderson module, and let $\pfrak$ be a prime of $A$ such that $\mot$ is free at $\pfrak$.
Let $\vect{m}=\transp{(m_1,\ldots,m_r)}$ be a basis of $\mot_{(\pfrak)}$. Let $\Theta\in \Mat_{r\times r}(\LA)$ be the matrix such that
\[ \tau(\vect{m})=\Theta \vect{m}. \]
By Corollary \ref{cor:mots-dualizable}, we have $\Theta\in \GL_r(\LA)$.

Recall that by Corollary \ref{cor:existence-of-local-rat}, there exists a rigid analytic trivialization $\Upsilon\in \GL_r(\TT_{(\pfrak)})$ with respect to the basis $\vect{m}=\transp{(m_1,\ldots,m_r)}$, i.e.~satisfying $\tau(\Upsilon\vect{m})=\Upsilon\vect{m}$.
This amounts to
\[ \tau(\Upsilon)=\Upsilon\Theta^{-1}, \]
and we let $\Psi:=\Upsilon^{-1}$ to get a difference equation in the form that is standard in difference Galois theory (with the matrix $\Psi$ as right factor on the right hand side):
\begin{equation}\label{eq:difference-equation-for-Psi}
\tau(\Psi)=\Theta \Psi. 
\end{equation} 
Associated to the matrix $\Psi\in \GL_r(\TT_{(\pfrak)})$ is a difference Galois group $\GPsi$, and \cite[Corollary 7.8]{am:capvt} states (in a quite general setting) that the motivic Galois group $\Gamma_{\mot}$ is isomorphic to that difference Galois group $\GPsi$.

\medskip

So we recall the difference Galois group $\GPsi$ associated to the matrix $\Psi\in \GL_r(\TT_{(\pfrak)})$ (see also the appendix of \cite{qg-am:ctmcrzvp}).

As above, let $Q=\Quot(\LA)$ be the field of fractions of $\LA$, and let $\LL=\Quot(\TT)$ be the field of fractions of $\TT$, and recall that $K=\Quot(A)$ denotes the field of fraction of $A$.
We denote by $P=Q[\Psi,\det(\Psi)^{-1}]\subset \LL$ the so-called Picard-Vessiot ring for the equation~\eqref{eq:difference-equation-for-Psi}.

The difference Galois group $\GPsi$ is defined as the group functor
\begin{align*}
\GPsi: ~& \CAlg_K& \longrightarrow & \quad \CGrp , \\
& ~\sA & \longmapsto & \quad \Gamma_{\Psi}(\sA):= \left\{
\alpha\in \Aut_{Q\otimes_K \sA}(P\otimes_K \sA) \,\, \middle|\,\, 
(\tau\otimes \id)\circ \alpha =\alpha\circ (\tau\otimes \id) \right\} &
\end{align*}
Since any element $\alpha\in \GPsi(\sA)$ commutes with the $\tau$-action in the given manner, the solution matrix $\Psi\in \GL_r(P)\subseteq \GL_r(P\otimes_K\sA)$ is mapped to another solution matrix which has to be of the form $\alpha(\Psi)=\Psi C(\alpha)$ for a matrix
$C(\alpha)\in \GL_r(\sA)\subseteq \GL_r(P\otimes_K \sA)$.

This provides the embedding of $\GPsi$ as a subgroup of $\GL_{r,K}$, i.e.~for all $K$-algebras $\sA$:
\[ \GPsi(\sA) \to \GL_r(\sA), \alpha \mapsto C(\alpha)=\Psi^{-1}\cdot \alpha(\Psi). \]

For determining which elements of $\GL_r(\sA)$ belong to (the image of) $\GPsi(\sA)$, consider the localized polynomial ring 
$Q[Y_{ij},\det(Y)^{-1}| i,j=1,\ldots,r ]$, and the ideal
\[ I:= \left\{ h\in Q[Y_{ij},\det(Y)^{-1}] \,\middle|\, h(\Psi)=0 \right\}, \]
so that $P$ is isomorphic to the quotient $Q[Y_{ij},\det(Y)^{-1}]/I$ by mapping the entries of the matrix $Y$ to the one of $\Psi$.
Then for $C\in \GL_r(\sA)$, we have
\begin{equation}\label{eq:condition-for-GPsi}
C=C(\alpha) \text{ for some }\alpha\in \GPsi(\sA) \quad \Longleftrightarrow \quad \forall\, f\in I:  f(\Psi\otimes C)=0\in P\otimes_K \sA.\footnote{For matrices $M$ and $N$, the notation $M\otimes N$ is shorthand for the matrix
$\left(\sum_{l} M_{il}\otimes N_{lj}\right)_{ij}$ which equals the product $\left( M_{ij}\otimes 1\right)_{ij}\cdot \left( 1\otimes N_{ij}\right)_{ij}$.} 
\end{equation} 

\begin{thm}\label{thm:image-in-motivic-group}
Let $E$ be a uniformizable Anderson module, and $\pfrak$ a prime of $A$ such that $\mot$ is free at $\pfrak$.
With notation as above, we have:
The explicit $\pfrak$-adic Galois representation from Theorem \ref{thm:galois-representation}\eqref{item:explicit-representation},
$
\varrho_\pfrak:\Gal(L^{\sep}/L) \to \GL_r(A_\pfrak), 
\gamma \mapsto  \Upsilon\cdot \gamma(\Upsilon)^{-1}$,
has image in
$\GPsi(K_\pfrak)$. 
\end{thm}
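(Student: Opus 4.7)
The plan is to apply the criterion \eqref{eq:condition-for-GPsi} directly. Fix $\gamma\in\Gal(L^{\sep}/L)$ and set $C_\gamma:=\varrho_\pfrak(\gamma)=\Upsilon\cdot\gamma(\Upsilon)^{-1}\in\GL_r(A_\pfrak)\subseteq\GL_r(K_\pfrak)$; I need to check that $f(\Psi\otimes C_\gamma)=0$ in $P\otimes_K K_\pfrak$ for every $f\in I$. The basic algebraic identity driving the whole argument is
\[ \Psi\cdot C_\gamma \;=\; \Upsilon^{-1}\cdot \Upsilon\cdot\gamma(\Upsilon)^{-1} \;=\; \gamma(\Upsilon^{-1}) \;=\; \gamma(\Psi), \]
which is a genuine matrix identity in $\GL_r((\LsepA)_\pfrak)$ since, by Theorem \ref{thm:galois-representation}(1), all matrices involved have entries there.

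Next I would introduce a natural $\tau$-equivariant evaluation homomorphism $\mu\colon P\otimes_K K_\pfrak\to R$ into a suitable $\tau$-ring $R$ which simultaneously accommodates $P$ (via the embedding $P\subseteq\LL$ coupled with $\TT_{(\pfrak)}\hookrightarrow(\CA)_\pfrak\hookrightarrow(\LsepA)_\pfrak$, possibly after an additional $\tau$-stable localization in order to reach elements of $P$ not lying in $\TT_{(\pfrak)}$) and $K_\pfrak$ (via $K_\pfrak\hookrightarrow(A\otimes L^\sep)_\pfrak$). Under $\mu$, the displayed identity translates into $\mu(\Psi\otimes C_\gamma)=\gamma(\Psi)$. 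Now $\gamma$ fixes $L$, and hence fixes $Q=\Quot(\LA)$ which contains the coefficients of $f\in I\subseteq Q[Y_{ij},\det(Y)^{-1}]$; applying $\gamma$ to the defining relation $f(\Psi)=0$ (valid in $P$) yields $f(\gamma(\Psi))=\gamma(f(\Psi))=0$, and consequently
\[ \mu\bigl(f(\Psi\otimes C_\gamma)\bigr) \;=\; f(\gamma(\Psi)) \;=\; 0. \]

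The main obstacle is then to upgrade this from $f(\Psi\otimes C_\gamma)\in\ker(\mu)$ to the actual vanishing of $f(\Psi\otimes C_\gamma)$ in $P\otimes_K K_\pfrak$, i.e.\ to the injectivity of $\mu$. This is exactly the content of the Picard--Vessiot machinery cited in the paper: $P$ is a $\tau$-simple $Q$-algebra whose ring of $\tau$-invariants is $K$, so that $P\otimes_K K_\pfrak$ is again $\tau$-simple, and every $\tau$-equivariant homomorphism from $P\otimes_K K_\pfrak$ into a $\tau$-ring extension whose $\tau$-invariants contain $K_\pfrak$ is automatically injective (compare \cite[Sect.~2]{am:dvnt} and \cite{am:capvt}). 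The target $R$ must accordingly be engineered so that its $\tau$-invariants still cut out exactly $K_\pfrak$, and not some larger ring; controlling the $\tau$-invariants of a suitable $\tau$-stable localization of $(\LsepA)_\pfrak$ is the one remaining technical point, and will rely on the explicit description of $\tau$-invariants of the completions worked out in Section \ref{sec:completions} together with Lang isogeny-type arguments already used in the proof of Lemma \ref{lem:iso-mod-pfrak}. Once this injectivity is in hand, the computation of the preceding paragraph gives $f(\Psi\otimes C_\gamma)=0$ for every $f\in I$, verifying \eqref{eq:condition-for-GPsi} and therefore $\varrho_\pfrak(\gamma)\in\GPsi(K_\pfrak)$ for every $\gamma\in\Gal(L^\sep/L)$.
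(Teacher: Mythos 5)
Your proposal is correct and follows essentially the same route as the paper's proof: everything rests on the matrix identity $\Psi\cdot C_\gamma=\Upsilon^{-1}\Upsilon\gamma(\Upsilon)^{-1}=\gamma(\Psi)$ together with $f(\gamma(\Psi))=\gamma(f(\Psi))=0$, which holds because $f$ has coefficients in $Q$, fixed by $\gamma$. The paper's own proof simply identifies $\Psi\otimes C_\gamma$ with the product $\Psi\cdot C_\gamma$ inside $\GL_r((\LsepA)_\pfrak)$ and does not spell out the evaluation map $P\otimes_K K_\pfrak\to R$ or its injectivity, which you rightly flag as the only delicate point; note, though, that your "one remaining technical point" is weaker than you fear, since for injectivity it suffices that $P\otimes_K K_\pfrak$ is $\tau$-simple (which follows from $\tau$-simplicity of $P$ with $P^\tau=K$ and $K_\pfrak$ being a field) and that the target ring is nonzero --- you do not need the target's $\tau$-invariants to be exactly $K_\pfrak$.
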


\begin{proof}
By the previous discussion, we just have to show that for every $\gamma\in \Gal(L^{\sep}/L)$, the matrix $C=\Upsilon\cdot \gamma(\Upsilon)^{-1}$ satisfies the condition in \eqref{eq:condition-for-GPsi} for $\sA=K_\pfrak$.
So let $\gamma\in \Gal(L^{\sep}/L)$ and $f\in I$ be arbitrary. As $C=\Upsilon\cdot \gamma(\Upsilon)^{-1}\in \GL_r(A_\pfrak)$ actually has coefficients in $(\LsepA)_\pfrak$, and also the coefficients of $\Psi=\Upsilon^{-1}$ are in $(\LsepA)_\pfrak$, the tensor
$\Psi\otimes C$ is just the product $\Psi\cdot C\in \GL_r((\LsepA)_\pfrak)$.
Then
\begin{eqnarray*}
f(\Psi\otimes \left( \Upsilon\cdot \gamma(\Upsilon)^{-1}\right) ) &=&  f(\Psi\cdot  \Upsilon\cdot \gamma(\Upsilon)^{-1}) \\
&=& f\left(\Psi\cdot \Psi^{-1}\cdot \gamma(\Psi)\right) = f(\gamma(\Psi))\\
&=& \gamma \left( f(\Psi) \right)= 0.
\end{eqnarray*}
The second last equation holds, since $f$ has coefficients in $Q=\Quot(\LA)$ which is fixed by $\gamma$.
\end{proof}

\bibliographystyle{alpha}

\begin{thebibliography}{{Mau}22a}

\bibitem[And86]{ga:tm}
Greg~W. Anderson.
\newblock {$t$}-motives.
\newblock {\em Duke Math. J.}, 53(2):457--502, 1986.

\bibitem[AP15]{ba-fp:ugtsls}
Bruno Angl{\`e}s and Federico Pellarin.
\newblock Universal {G}auss-{T}hakur sums and {$L$}-series.
\newblock {\em Invent. Math.}, 200(2):653--669, 2015.

\bibitem[BH07]{gb-uh:uft}
Gebhard B\"{o}ckle and Urs Hartl.
\newblock Uniformizable families of {$t$}-motives.
\newblock {\em Trans. Amer. Math. Soc.}, 359(8):3933--3972, 2007.

\bibitem[BP02]{db-mp:ligvpc}
W.~Dale Brownawell and Matthew~A. Papanikolas.
\newblock Linear independence of {{\(\Gamma\)}} values in positive
  characteristic.
\newblock {\em J. Reine Angew. Math.}, 549:91--148, 2002.

\bibitem[BP20]{db-mp:ridmtt}
W.~Dale Brownawell and Matthew~A. Papanikolas.
\newblock A rapid introduction to {D}rinfeld modules, t-modules, and t-motives.
\newblock In {\em t-{M}otives: {H}odge {S}tructures, {T}ranscendence, and
  {O}ther {M}otivic {A}spects}, EMS Series of Congress Reports, pages 3--30.
  2020.

\bibitem[Con00]{kc:dp}
Keith Conrad.
\newblock The digit principle.
\newblock {\em J. Number Theory}, 84(2):230--257, 2000.

\bibitem[CP12]{cc-mp:aipldm}
Chieh-Yu Chang and Matthew~A. Papanikolas.
\newblock Algebraic independence of periods and logarithms of {D}rinfeld
  modules.
\newblock {\em J. Amer. Math. Soc.}, 25(1):123--150, 2012.
\newblock With an appendix by Brian Conrad.

\bibitem[Gek90]{eug:drcdm}
Ernst-Ulrich Gekeler.
\newblock de {R}ham cohomology for {D}rinfel\cprime d modules.
\newblock In {\em S\'eminaire de {T}h\'eorie des {N}ombres, {P}aris
  1988--1989}, volume~91 of {\em Progr. Math.}, pages 57--85. Birkh\"auser
  Boston, Boston, MA, 1990.

\bibitem[GM21]{qg-am:sfgtshrd}
Quentin {Gazda} and Andreas {Maurischat}.
\newblock {Special functions and Gauss-Thakur sums in higher rank and
  dimension}.
\newblock {\em {J. Reine Angew. Math.}}, 773:231--261, 2021.

\bibitem[GM23]{qg-am:ctmcrzvp}
Quentin Gazda and Andreas Maurischat.
\newblock {C}arlitz twists: their motivic cohomology, regulators, zeta values
  and polylogarithms.
\newblock Preprint available from arXiv at http://arxiv.org/abs/2212.02972,
  2023.

\bibitem[GM24]{qg-am:rsfaacg}
Quentin Gazda and Andreas Maurischat.
\newblock {Residue of special functions of Anderson A-modules at the
  characteristic graph}.
\newblock {\em Journal of Number Theory}, 260:1--28, 2024.

\bibitem[HJ20]{uh-akj:pthshcff}
Urs Hartl and Ann-Kristin Juschka.
\newblock {P}ink's theory of {H}odge structures and the {H}odge conjecture over
  function fields.
\newblock In {\em t-{M}otives: {H}odge {S}tructures, {T}ranscendence, and
  {O}ther {M}otivic {A}spects}, EMS Series of Congress Reports, pages 31--182.
  2020.

\bibitem[Mai14]{am:dvnt}
Annette Maier.
\newblock A difference version of {N}ori's theorem.
\newblock {\em Math. Ann.}, 359(3-4):759--784, 2014.

\bibitem[Mat89]{hm:crt}
Hideyuki Matsumura.
\newblock {\em Commutative ring theory}, volume~8 of {\em Cambridge Studies in
  Advanced Mathematics}.
\newblock Cambridge University Press, Cambridge, second edition, 1989.
\newblock Translated from the Japanese by M. Reid.

\bibitem[{Mau}17]{am:capvt}
Andreas {Maurischat}.
\newblock A categorical approach to {P}icard-{V}essiot theory.
\newblock {\em {Theory Appl. Categ.}}, 32:488--525, 2017.

\bibitem[Mau19]{am:crfhdipm}
Andreas Maurischat.
\newblock A chain rule formula for higher derivations and inverses of
  polynomial maps.
\newblock {\em Comm. Algebra}, 47(7):2617--2633, 2019.

\bibitem[Mau21]{am:aefam}
Andreas Maurischat.
\newblock Abelian equals {A}-finite for {A}nderson {A}-modules.
\newblock Preprint available from arXiv at http://arxiv.org/abs/2110.11114,
  2021.

\bibitem[{Mau}22a]{am:atmttgr}
Andreas {Maurischat}.
\newblock {Anderson \(t\)-modules with thin \(t\)-adic Galois representations}.
\newblock {\em {Proc. Am. Math. Soc.}}, 150(3):927--940, 2022.

\bibitem[Mau22b]{am:ptmsv}
Andreas Maurischat.
\newblock Periods of t-modules as special values.
\newblock {\em Journal of Number Theory}, 232:177--203, 2022.
\newblock Special Issue: David Goss Memorial Issue.

\bibitem[MP19]{am-rp:iddbcppte}
Andreas {Maurischat} and Rudolph {Perkins}.
\newblock {An integral digit derivative basis for Carlitz prime power torsion
  extensions}.
\newblock In {\em Actes de la conf\'erence ``Analogies between number field and
  function field : algebraic and analytic approaches''}, pages 131--149.
  Besan\c{c}on: Presses Universitaires de Franche-Comt\'e, 2019.

\bibitem[MP22]{am-rp:tcagfdte}
A.~Maurischat and R.~Perkins.
\newblock {T}aylor coefficients of {A}nderson generating functions and
  {D}rinfeld torsion extensions.
\newblock {\em International Journal of Number Theory}, 18(1):113--130, 2022.

\bibitem[OW15]{ao-mw:sgtlde}
Alexey Ovchinnikov and Michael Wibmer.
\newblock {$\sigma$}-{G}alois theory of linear difference equations.
\newblock {\em Int. Math. Res. Not. IMRN}, (12):3962--4018, 2015.

\bibitem[Pap08]{mp:tdadmaicl}
Matthew~A. Papanikolas.
\newblock Tannakian duality for {A}nderson-{D}rinfeld motives and algebraic
  independence of {C}arlitz logarithms.
\newblock {\em Invent. Math.}, 171(1):123--174, 2008.

\bibitem[R{\"o}s07]{ar:icac}
Andreas R{\"o}scheisen.
\newblock {\em Iterative Connections and Abhyankar's Conjecture}.
\newblock PhD thesis, Heidelberg University, Heidelberg, Germany, 2007.
\newblock DOI:10.11588/heidok.00007179.

\bibitem[vdPS97]{mvdp-mfs:gtde}
Marius van~der Put and Michael~F. Singer.
\newblock {\em Galois theory of difference equations}, volume 1666 of {\em
  Lecture Notes in Mathematics}.
\newblock Springer-Verlag, Berlin, 1997.

\bibitem[Yu97]{jy:ahdm}
Jing Yu.
\newblock Analytic homomorphisms into {D}rinfeld modules.
\newblock {\em Ann. of Math. (2)}, 145(2):215--233, 1997.

\end{thebibliography}
\def\cprime{$'$}

\vspace*{.5cm}

\parindent0cm

\end{document}